\documentclass[11pt,reqno]{amsart}
\usepackage{amssymb,mathtools,calc,verbatim,enumitem,tikz,url,mathrsfs,cite,fullpage}
\usepackage{textcomp}
\usepackage{setspace}
\usepackage{amsthm}
\usepackage{amsmath}
\usepackage{graphicx}
\usepackage{marvosym}
\usepackage{empheq}
\usepackage{latexsym}
\usepackage[T1]{fontenc}
\usepackage{lmodern}
\usepackage{color}
\usepackage{hyperref}
\usepackage{cleveref}
\usepackage{refcount}
\usepackage{microtype}
\usepackage{placeins}

\usepackage{todonotes}

\newenvironment{poc}{\begin{proof}[Proof of Claim~\theclaim]}{\end{proof}}

\newtheorem{theorem}{Theorem}[section]
\newtheorem{lemma}[theorem]{Lemma}
\newtheorem{corollary}[theorem]{Corollary}

\newtheorem{proposition}[theorem]{Proposition}
\newtheorem{claim}{Claim}
\newcommand{\claimparent}{0}
\renewcommand{\theclaim}{\claimparent.\arabic{claim}}

\newcommand{\claimsof}[1]{%
  \setcounter{claim}{0}%
  \edef\claimparent{\getrefnumber{#1}}%
}
\newtheorem*{lemma*}{Lemma}

\theoremstyle{definition}

\newtheorem*{qu*}{Question}
\theoremstyle{remark}

\renewcommand\Pr{\operatorname{\mathbb{P}}}

\renewcommand\le{\leqslant}
\renewcommand\ge{\geqslant}
\renewcommand\to{\rightarrow}

\def\<{\langle}
\def\>{\rangle}

\newcommand{\iso}{\mathrm{iso}}

\title{An extremal theorem for non-isomorphic spanning trees}

\author{Zhifei Yan}
\address{ECOPRO, Institute for Basic Science, 55 Expo-ro, Yuseong-gu,
Daejeon, 34126, Korea}
\email{zhifeiyan@ibs.re.kr}

\author{Lu-Ming Zhang}
\address{Department of Mathematics, London School of Economics and Political Science,
Houghton Street, London, WC2A 2AE, United Kingdom}
\email{l.zhang100@lse.ac.uk}

\date{}

\newcommand{\tauiso}{\tau_{\iso}}

\usetikzlibrary{arrows.meta,positioning,calc}
\hypersetup{
  pdftitle={An extremal theorem for non-isomorphic spanning trees},
  pdfauthor={Zhifei Yan and Lu-Ming Zhang}
}
\begin{document}

\begin{abstract}
For a graph $G$, let $\tauiso(G)$ denote the number of isomorphism classes
of its spanning trees. For every fixed $d\ge3$ and all sufficiently large
$n$, we prove that every connected $n$-vertex graph $G$ with
$\delta(G)\ge d$ satisfies
\[
  \tauiso(G)\ge \tauiso(K_{d,n-d})=A_dn^{d-1}+O_d(n^{d-2}),
\]
for an explicit constant
$A_d>0$, and $K_{d,n-d}$ is the unique minimizer. This confirms a conjecture of Bitonti, Michel and Scott and
extends it to every $d\ge3$. We also show that any such graph with \(O(n^{d-1})\) spanning-tree types has all but a bounded number of vertices with the same \(d\) neighbours.
\end{abstract}

\maketitle

\section{Introduction}

Counting spanning trees is a classical problem in graph theory.
The Matrix--Tree Theorem has its origins in Kirchhoff's 1847
work~\cite{kirchhoff}. In its modern formulation, it expresses the
number $\tau(G)$ of spanning trees of a connected graph as a cofactor
of its Laplacian matrix, while Cayley's formula~\cite{cayley}
gives $\tau(K_n)=n^{n-2}$. Degree conditions also force many spanning
trees.
For regular graphs, McKay~\cite{mckay} obtained sharp upper bounds on the number of spanning trees, while Alon~\cite{alon} proved lower bounds. Kostochka~\cite{kostochka} extended Alon’s lower bounds to graphs with general degree sequences.
These lower bounds show that sufficiently large
minimum degree forces exponentially many spanning trees, but they count
different edge sets even when the resulting trees are isomorphic.

We instead count spanning trees up to isomorphism. Write $\tauiso(G)$
for the number of their isomorphism classes, viewing each tree as an
abstract unrooted graph. The distinction matters: even in $K_n$, a tree
$T$ has $n!/|\operatorname{Aut}(T)|$ labelled copies, so types occur with
different multiplicities. For the complete graph, all unlabelled trees
occur, and Otter~\cite{otter} proved in 1948 that
\[
  \tauiso(K_n)=(1+o(1))C\alpha^n n^{-5/2},
  \qquad C\approx0.535,\quad \alpha\approx2.956.
\]
For a general host graph, a large labelled count does not by itself
ensure a comparable number of types.

The study of tree types in a fixed graph goes back at least to
Zelinka~\cite{zelinka71}, who characterized the finite connected graphs
whose spanning trees are all isomorphic, answering a question of
Sedl\'a\v{c}ek. Vestergaard~\cite{vestergaard} later gave further proofs
and studied the corresponding question for finite and infinite graphs.
In 1978, Zelinka~\cite{zelinka78} proved that a connected graph containing
$k\ge2$ vertex-disjoint cycles has at least $k+1$ non-isomorphic spanning
trees, confirming a conjecture of Hartnell. These results link the
number of types to the structure of the graph. A natural next question
is how many types can be forced by a minimum-degree condition alone.

Recent progress has used random spanning trees. Let $\mathcal T$ be a
uniformly random spanning tree of $G$. A bound
$\Pr(\mathcal T\cong T)\le p$ for every tree $T$ implies
$\tauiso(G)\ge p^{-1}$. Lee~\cite{lee} proved that there are absolute
constants $\varepsilon,c>0$ such that, if $G$ is connected and all its
degrees lie in $[(1-\varepsilon)D,(1+\varepsilon)D]$ for sufficiently
large $D$, then
\[
  \Pr(\mathcal T\cong T)\le e^{-cn}
  \qquad\text{for every tree }T.
\]
Thus these almost regular graphs have exponentially many spanning-tree
types. Wang~\cite{wang} obtained a growth
rate arbitrarily close to Otter's constant: for every
$0<\varepsilon<\alpha$, regular graphs of sufficiently large degree
satisfying a suitable fixed spectral condition have at least
$(\alpha-\varepsilon)^n$ types for large $n$. Regularity and expansion
thus permit much stronger conclusions than a minimum-degree assumption.

Indeed, the complete bipartite graph $K_{d,n-d}$ has exponentially many
labelled spanning trees but only polynomially many types when $d$ is
fixed. The classical bipartite formula~\cite{moon} gives
\[
  \tau(K_{d,n-d})=d^{n-d-1}(n-d)^{d-1},
\]
whereas $\tauiso(K_{d,n-d})=\Theta_d(n^{d-1})$. Deleting the leaves on
the large side leaves a core of bounded order; for each core, only the
$d$ leaf counts on the small side remain, and their sum is fixed.
We determine the leading coefficient in
Proposition~\ref{prop:coefficient}; see Johnson and Nochumson~\cite{johnson}
for partition-based bounds for general complete bipartite graphs.
Thus the largest possible universal power of $n$ is $d-1$.

Bitonti, Michel and Scott~\cite{bitonti} proved that, for sufficiently
large $d$ and $n$ sufficiently large in terms of $d$, every connected
$n$-vertex graph of minimum degree at least $d$ satisfies
$\Pr(\mathcal T\cong T)\le n^{-\Omega(d)}$ for every tree $T$.
This resolves Lee's polynomial anti-concentration conjecture and gives
$$\tauiso(G)\ge n^{\Omega(d)}.$$
They further conjectured the optimal
counting bound $\Omega_d(n^{d-1})$ for all sufficiently large $d$
\cite[Conjecture~4.2]{bitonti}. We prove this for every $d\ge3$ and,
more precisely, identify the unique extremal graph for large $n$.

\begin{theorem}\label{thm:main}
For every fixed integer $d\ge3$ and all sufficiently large $n$, every
connected $n$-vertex graph $G$ with $\delta(G)\ge d$ satisfies
\[
  \tauiso(G)\ge\tauiso(K_{d,n-d}),
\]
with equality if and only if $G\cong K_{d,n-d}$.
\end{theorem}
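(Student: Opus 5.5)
The plan is a stability argument in two stages: first show that any $G$ with $\tauiso(G)=O(n^{d-1})$ is structurally close to $K_{d,n-d}$, then handle the bounded number of exceptional vertices by exact comparison.

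\emph{Stage 1: upper bound and dichotomy.} First compute $\tauiso(K_{d,n-d})=A_dn^{d-1}+O_d(n^{d-2})$, which is Proposition~\ref{prop:coefficient}. It then suffices to treat graphs $G$ with $\tauiso(G)\le Cn^{d-1}$ for a suitable constant $C$; any other $G$ already satisfies the strict inequality for large $n$. For such $G$ we aim to prove the structural statement from the abstract: there is a set $S$ of $d$ vertices such that all but $O_d(1)$ vertices have neighbourhood exactly $S$.

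The mechanism is to construct many pairwise non-isomorphic spanning trees from local configurations. Suppose $G$ contains a family of $k$ vertex-disjoint "flexible gadgets", each of which can be routed in a tree in two ways that change some degree statistic. Then the resulting trees are distinguished by, for example, the multiset of degrees along a long induced path, or by the number of vertices of each degree. Such counts give $n^{\Omega(k)}$ types. Examples of gadgets are a vertex of degree $\ge d$ whose neighbours are spread out, or a long path of degree-2-attachable vertices.

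In more detail, I would first show that $G$ has only $O(1)$ vertices of degree $\ge \varepsilon n$; call this set $H$. I would argue this via a greedy choice of hubs with disjoint leaf sets, where the leaf counts at the hubs can be varied independently. If $|H|\ge d+1$, with each hub adjacent to linearly many vertices that can be assigned to hubs freely, then distributing leaves gives roughly $n^{d}$ degree sequences of hubs, hence $n^{d}$ types.

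Conversely, if fewer than $d$ high-degree vertices existed, minimum degree $d$ would force each low vertex to have a neighbour outside $H$. A greedy matching then yields linearly many disjoint low-degree structures. From these one builds caterpillars or paths with $n^{\omega(d)}$ distinct shapes: for instance, $\Theta(n)$ disjoint cherries hung off a long path, whose positions encode compositions.

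\emph{Stage 2: exact comparison.} Hence $|H|=d$, and almost every vertex has all $d$ neighbours in $H$. Write $G$ as $K_{d,n-d}$-like plus a bounded "junk" part $J$: the vertices not fully joined to $S$, the edges inside $S$, and the edges among non-$S$ vertices. Every spanning tree of $K_{d,n-d}$ embeds in $G$ only if $G\supseteq K_{d,n-d}$, which need not hold, so a direct inclusion argument is unavailable. Instead I would classify types by core plus leaf-count vector $(\ell_1,\dots,\ell_d)$, exactly as in the computation for $K_{d,n-d}$. I would show that $G$ realizes at least all types $K_{d,n-d}$ realizes with all $\ell_i\ge L$ for a large constant $L$; the junk vertices are absorbed as leaves or attached through the bounded core. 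In addition, any extra edge or junk vertex provides a new family of $\Omega(n^{d-1})$ types with a core not available in $K_{d,n-d}$. One example is a core containing an edge inside $S$, or a non-$S$ vertex of degree $\ge 2$ in the tree that is adjacent to another non-$S$ vertex.

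The boundary types, those with some $\ell_i<L$, number only $O(n^{d-2})$. They are outweighed by the $\Omega(n^{d-1})$ new types, giving strict inequality whenever $G\not\cong K_{d,n-d}$. One must also rule out the case $G\subsetneq K_{d,n-d}$ with vertices lacking some neighbours in $S$. This cannot happen with $\delta\ge d$ unless those vertices have other neighbours, which creates new cores.

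\emph{Main obstacle.} The hardest step is Stage 1: proving the stability statement with constants good enough that everything other than a bounded perturbation yields at least $C n^{d-1}$ types, uniformly for all $d\ge3$. The dangerous case is intermediate-degree vertices (degree between a constant and $\varepsilon n$). There I would first try a lemma saying that $m$ vertex-disjoint stars of size $\ge 2$, centred at distinct vertices and with freely reassignable leaves, give $\binom{\Theta(n)}{m-1}$ types by degree sequence. Combining it with a König-type dichotomy, either there is a large such star system or a small vertex cover, would reduce matters to the bounded-cover case $|H|=d$.
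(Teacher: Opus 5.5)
Your top-level architecture (baseline count for $K_{d,n-d}$, then stability, then an exact comparison of bounded perturbations) matches the paper's. The gap is Stage 1, which carries essentially all the difficulty (Theorem~\ref{thm:stability}): it is only a heuristic, and its concrete steps fail.
\begin{itemize}
\item \emph{The matching need not be linear.} When $|H|<d$, take a set $Q$ of about $2d/\varepsilon$ vertices and join every other vertex to $d$ vertices of $Q$, spread evenly. Then $H=\emptyset$, but every edge meets $Q$, so every matching has $O(1)$ edges.
\item \emph{There is no long path.} ``Cherries hung off a long path'' presupposes one, yet any graph with $\tauiso(G)=O(n^{d-1})$ has no path on more than $O((\log n)^4)$ vertices (Proposition~\ref{prop:path}). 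So this mechanism never applies in the relevant regime.
\item \emph{$|H|=d$ does not give the structure.} Nothing takes you from $|H|=d$ to ``all but $O(1)$ vertices have neighbourhood exactly $H$''. Half the vertices may have neighbourhood $H$ while the other half form a bounded-degree graph of minimum degree $d$ hanging off them.
\item \emph{Free reassignment is assumed.} Both your $|H|\ge d+1$ step and your fallback (disjoint stars with freely reassignable leaves) assume exactly what is missing. Leaves that move freely among fixed centres form a common-neighbourhood class, and the hard case is precisely when every such class is small.
\end{itemize}
The right objects to organize the argument around are the classes $X(S)=\{v:N_G(v)=S\}$, not degree thresholds.

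Two quantitative mechanisms are missing.

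First, you need an amplification bound $\tauiso(G)\ge b_dm^d(r+1)/n$ for a class of size $m$ with remainder $r$. Free leaf vectors over $S$ give $m^{d-1}$ choices. The remainder, contracted at $S$, has at least $r/2+1$ rooted types by Lemma~\ref{lem:surplus}, since no remainder vertex is isolated in $G[R]$ (otherwise it would lie in $X(S)$). Forgetting the marks costs only $O(n/m)$. This bound turns a class of size $\ge n/2$ into $r=O(1)$, and forces every smaller class to have size $O(n^{1-1/d})$.

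Second, when all classes are small, you must still produce $\omega(n^{d-1})$ types from pieces that become indistinguishable once labels are forgotten. The paper does this as follows.
\begin{itemize}
\item A DFS tree of polylogarithmic height yields $n/(\log n)^{O(1)}$ light subtrees of one common order $b\ge2$.
\item Each such piece has branch profiles of affine dimension at least $d$ (Lemma~\ref{lem:multiroot}), one more than a single leaf with $d$ attachment choices.
\item Hence the profile sums take at least $\binom{N+d}{d}$ values (Lemma~\ref{lem:affine}).
\item This sum, though not the individual pieces, is recoverable from the rooted tree because the heavy siblings stay fixed (Lemma~\ref{lem:recovery}).
\item The root is then forgotten cheaply, using a centroid or a high-degree root.
\end{itemize}

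Stage 2 also needs a correction. $G$ need not realize the $K_{d,n-d}$ types with all $\ell_i\ge L$: a remainder vertex with no neighbour in $S$ can be neither a hub leaf nor a bipartite core vertex. You must compare counts instead of type sets. Each fixed $S$-rooted remainder forest yields its own family of $A_dn^{d-1}-O(n^{d-2})$ types. Since the contracted remainder contains a cycle, two forests of different rooted types give two disjoint such families. When $r=0$, the extra edge inside $S$ supplies the new core family.
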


The minimum also has an explicit leading coefficient. Writing $S(a,b)$
for a Stirling number of the second kind, set
\[
  A_d:=\frac{1}{d!(d-1)!}
       \sum_{k=1}^{d-1}d^{k-1}S(d-1,k).
\]
Proposition~\ref{prop:coefficient} gives
$$\tauiso(K_{d,n-d})=A_dn^{d-1}+O_d(n^{d-2});$$
for example,
$A_3=1/3$~\cite{mohr} and $A_4=29/144$~\cite{vandenboomen}. Decreasing the constant to cover the finitely
many smaller orders gives $\tauiso(G)\ge c_dn^{d-1}$ for every $n$.
The restriction $d\ge3$ is necessary, since every spanning tree of a
cycle is a path.

The main step is a stability theorem. It applies not just near the
minimum, but whenever the count is at most any fixed multiple of
$n^{d-1}$. Throughout the paper, graphs are finite and simple, and
$N_G(v)$ denotes the open neighbourhood of $v$.

\begin{theorem}[Stability]\label{thm:stability}
Fix $d\ge3$ and $C>0$. For all sufficiently large $n$, every connected
$n$-vertex graph $G$ with $\delta(G)\ge d$ and
$\tauiso(G)\le Cn^{d-1}$ has a set $S$ of $d$ vertices such that
\[
  \bigl|\{v:N_G(v)=S\}\bigr|=n-O_{d,C}(1).
\]
\end{theorem}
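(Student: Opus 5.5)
We prove the contrapositive. We show that a connected graph $G$ with $\delta(G)\ge d$ contains many spanning-tree types unless almost all vertices share one $d$-set as neighbourhood. The engine is a family of \emph{tunable} constructions. We produce $d-1$ (or more) independent integer parameters, each ranging over an interval of length $\Omega(n)$, and each choice gives a spanning tree whose isomorphism type records the parameters, up to a bounded-to-one ambiguity. Such a family yields $\Omega(n^{d-1})$ types. Taking the constants larger then forces any graph with $\tauiso(G)\le Cn^{d-1}$ into the structured regime.

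\textbf{Step 1: the max-degree profile.} Let $\Delta_1\ge\Delta_2\ge\cdots$ be the vertex degrees in decreasing order. First suppose there is a linear-size family of vertices of small degree that are far apart. Then we can attach local gadgets, such as pendant paths or cherries, and realise many degree sequences; this gives superpolynomially many types. So we may assume that all but $O(1)$ vertices lie close to a bounded set $H$ of high-degree vertices.

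For the quantitative version we use a star-forest argument. Suppose $h$ vertices $u_1,\dots,u_h$ each have $\Omega(n)$ private neighbours. Then spanning trees in which $u_i$ has degree $a_i$, for freely chosen $a_i$ with a bounded number of rebalancing steps, give about $n^{h-1}$ distinct sorted degree multisets among the hubs. Hence about $n^{h-1}$ types. To beat $Cn^{d-1}$ we need more than the count from $h=d$ hubs. So we must show that the degree-$\ge d$ condition either creates extra independent parameters or forces exactly the $K_{d,n-d}$ shape. Extra parameters come from lengths of pendant paths through low-degree vertices, from counts of leaves hanging on non-hub vertices, and so on.

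\textbf{Step 2: reduction to the bipartite core.} Let $L$ be the set of vertices all of whose neighbours lie in the bounded hub set $H$. Since each such vertex has degree $\ge d$, it has at least $d$ hub neighbours. Vertices outside $L$ number $O(1)$; otherwise each of them contributes an independently adjustable feature and we gain a factor of $n$. The key claim is that if $\Omega(n)$ vertices of $L$ have a neighbourhood other than a single fixed $d$-set $S$, then we obtain $d$ independent parameters. Here $\Omega(n)$ means at least $K$ of them, for a large constant $K=K(d,C)$, arranged in linearly many or pigeonholed classes.

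This splits into two cases. If some $v$ has $d+1$ hub neighbours, then we have $d+1$ hubs sharing linearly many common neighbours. By pigeonhole over the $O(1)$ neighbourhood classes, a single class $N\subseteq H$ with $|N|\ge d+1$ contains $\Omega(n)$ vertices. Distributing those vertices as leaves among $d+1$ hubs gives $n^{d}$ degree profiles, which are distinguished by the hub-degree multiset.

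If instead the neighbourhoods are $d$-sets but two distinct classes $S\ne S'$ both have size $\Omega(n)$, then $|S\cup S'|\ge d+1$ hubs all receive linearly many leaves. The same count applies, since hubs in $S\cap S'$ can be fed from either class.

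If the second class has only $k$ members, with $1\ll k\ll n$, we get roughly $n^{d-1}\cdot k$ types. The extra factor comes from how many of these $k$ vertices attach to the hub in $S'\setminus S$. Letting $k$ exceed a constant depending on $C$ gives the bound $n-O_{d,C}(1)$.

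\textbf{Main obstacle.} The hardest step is showing that the constructed trees are pairwise non-isomorphic up to bounded multiplicity. Hub degrees $a_i$ determine the tree type only after sorting, and leaf counts can coincide. We handle this by fixing a rigid bounded ``skeleton'' of the tree on $H\cup(V\setminus L)$. We choose it so that each hub is distinguishable: for instance, we attach to hub $u_i$ a distinct number of pendant paths of length 2 through spare vertices of $L$.

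The types are then recovered from the multiset of hub degrees together with these markers. Each type arises from at most $d!\cdot O(1)$ parameter vectors. Counting parameter vectors with all $a_i\ge\varepsilon n$ and $\sum a_i$ fixed gives $\Omega(n^{\#\text{hubs}-1})$ types.

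Making Step 1 rigorous is also delicate. The point is to control the case where vertices are not near a bounded hub set, for example in graphs of bounded maximum degree. There we would use many disjoint local configurations, such as vertices of degree $\ge3$ spread apart, to realise linearly many independent choices, which is far more than $n^{d-1}$.
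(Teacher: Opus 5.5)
Your proposal has a genuine gap, and it sits where the difficulty of the theorem lies. Step 1 asserts, without argument, that local gadgets give superpolynomially many types unless all but $O(1)$ vertices lie close to a bounded set of high-degree hubs. Proving that a class $X(S)$ with $|X(S)|\ge n/2$ exists is essentially the whole content of Proposition~\ref{prop:large-class}, and the sketch does not establish it, for three reasons.

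\emph{The dichotomy is not exhaustive.} Between ``a linear family of far-apart low-degree vertices'' and ``a bounded hub set'' lie many graphs. Examples are graphs of small diameter with about $\sqrt n$ hubs of degree about $\sqrt n$, graphs with many medium-sized common-neighbourhood classes on different $d$-sets, and graphs with an unbounded but slowly growing hub set. Long paths can be exploited, but even that needs an argument such as Proposition~\ref{prop:path}.

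\emph{There is nothing to attach.} A spanning tree uses only edges of $G$, so local freedom exists only where $G$ supplies cycles or alternative attachment points. You would have to show that $\delta(G)\ge d$ supplies enough of it.

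\emph{Local changes need not be distinguishable.} This is the most serious point. Independent changes at many places need not give distinct \emph{unlabelled} trees, because modified pieces can be permuted into one another. At best the \emph{sum} of the local choices is visible, and only relative to a root that must afterwards be forgotten. Your ``main obstacle'' paragraph treats distinguishability only inside the hub regime. Even there the proposed markers do not exist: every neighbour of a vertex of $L$ is a hub, so there are no pendant paths of length two through $L$.

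For comparison, here is how the paper obtains the large class.
\begin{itemize}
\item The bound $\tauiso(G)\ge b_d m^d(r+1)/n$ (Proposition~\ref{prop:amplify}) makes every class smaller than $n/2$ of size $O(n^{1-1/d})$.
\item Proposition~\ref{prop:path} bounds DFS height by $O((\log n)^4)$.
\item With Lemma~\ref{lem:pool}, this spreads DFS leaves over many parents and yields $M=n/\mathrm{polylog}(n)$ light subtrees of a common order $b\ge2$.
\item Each piece has branch profiles of affine dimension at least $d$ (Lemma~\ref{lem:multiroot}), so the profile sums take at least $\binom{M+d}{d}$ values (Lemma~\ref{lem:affine}).
\item These sums can be read off the rooted tree because heavy siblings stay fixed (Lemma~\ref{lem:recovery}).
\item Rooting at a centroid or at a forced high-degree vertex makes forgetting the root cost only a polylogarithmic factor (Lemma~\ref{lem:guarded}).
\end{itemize}
This gives $n^d/\mathrm{polylog}(n)$ types, a contradiction. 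None of these ingredients, or substitutes for them, appears in your plan.

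Your Step 2 roughly matches Proposition~\ref{prop:amplify} and is salvageable, with corrections. The claim that each vertex outside $L$ gains a factor of $n$ is not provable as stated. The paper gets only a factor $r+1$, via $q(J,\rho)\ge r/2+1$ for the contracted remainder $J$. That bound uses that $X(S)$ contains \emph{every} vertex with neighbourhood $S$, together with $\delta(G)\ge d$, to get $2e(J)\ge 3r$. This factor suffices once a class of size $n/2$ is known, so the separate claim $|V\setminus L|=O(1)$ is unnecessary. Also, one vertex with $d+1$ hub neighbours does not give $d+1$ hubs with linearly many common neighbours; the sublinear cases must go through the $(r+1)$ factor.
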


Thus all but a bounded number of vertices have the same $d$ neighbours.
A nonempty remainder, or an extra edge among these neighbours, then
forces strictly more tree types than $K_{d,n-d}$ for large $n$.
The proof counts types directly, using local changes in a DFS tree
that remain distinguishable after vertex labels are forgotten.

\medskip
\noindent\textbf{Organization.}
Section~\ref{sec:overview} gives the proof idea.
Section~\ref{sec:tools} develops the three counting tools used later.
Section~\ref{sec:stability} proves the stability theorem, and
Section~\ref{sec:extremality} compares the remaining bounded-remainder
graphs. Section~\ref{sec:mainproofs} then completes the proof of the main theorem.

\medskip
\noindent\textbf{Terminology used in this paper.}
For a $d$-set $S$, the \emph{common-neighbourhood class}
$X(S)=\{v:N_G(v)=S\}$ consists of the vertices whose neighbourhood is
exactly $S$. We often call the vertices of $S$ the \emph{hubs}; the vertices
outside $S\cup X(S)$ form the \emph{remainder}. Thus a large class with a
small remainder is the structure we expect near $K_{d,n-d}$.

A \emph{piece} is a connected set of at least two vertices on which we make
local changes while keeping the rest of the tree fixed; later it is usually a
descendant subtree of a DFS tree. If all outside neighbours of a piece $U$
lie in a set $S_U$, an $S_U$-rooted forest splits $U$ into \emph{branches},
each attached to one vertex of $S_U$. The \emph{branch profile} records the
rooted types of these branches and their attachment vertices, with
multiplicities. A \emph{core} appears only in the final extremal count: for a
spanning tree of $K_{d,n-d}$, delete the leaves in the large bipartition class
while keeping the $d$ hubs; the bounded tree that remains is the core. Heavy
and light subtrees are defined when they first appear in
Section~\ref{sec:stability}.

\medskip

\section{Proof overview}\label{sec:overview}

The complete bipartite graph explains the exponent $d-1$. After a bounded
core is fixed, almost every vertex on the large side is a leaf. A tree type
then records essentially only the $d$ leaf counts at the vertices on the
small side, and these counts have fixed sum. Thus there are only $d-1$
free parameters.

Our proof shows that every genuinely nontrivial piece creates one more
degree of freedom. Suppose that $U$ is a connected set with at least two
vertices and all its outside neighbours lie in a set $S$. Replacing the
tree inside $U$ by an $S$-rooted forest gives a set of branch profiles.
Lemma~\ref{lem:multiroot} shows that this set has affine dimension at least
$d$. By contrast, one leaf with $d$ possible attachment vertices has only
$d$ choices and affine dimension $d-1$. Lemma~\ref{lem:affine} then turns
many such pieces into order $t^d$ different profile sums. This extra
power is the source of the stability theorem.

We now sketch how the pieces are found. Fix $d,C$ and assume
\[
  \tauiso(G)\le Cn^{d-1}.
\]
For a $d$-set $S$, write
  $X(S):=\{v:N_G(v)=S\}$,$m=|X(S)|$ and $r=n-d-m$.
Proposition~\ref{prop:amplify} gives
\[
  \tauiso(G)\ge b_d\frac{m^d(r+1)}n.
\]
Hence a class with $m\ge n/2$ already has $r=O_{d,C}(1)$, while every
smaller class has size $O_{d,C}(n^{1-1/d})$. Thus, to prove stability, it
is enough to find one class containing at least half the vertices.

Assume that no such class exists. Proposition~\ref{prop:path} shows that
every DFS tree has height
\[
  H=O_{d,C}((\log n)^4).
\]
The small-class bound forces the leaves of the DFS tree to be spread over
many parents. A heavy-path decomposition then gives
\[
  N=\frac{n}{(\log n)^{O_d(1)}}
\]
disjoint light subtrees, all with the same order $b\ge2$. Their heavy
siblings stay fixed and make the sum of the local profiles visible in the
whole tree. Lemma~\ref{lem:recovery} formalizes this recovery. Finally,
Lemma~\ref{lem:guarded} chooses the root so that forgetting it costs
only a polylogarithmic factor. The result is
\[
  \tauiso(G)\ge \frac{n^d}{(\log n)^{O_d(1)}},
\]
contrary to $\tauiso(G)\le Cn^{d-1}$. Therefore a class of size at least
$n/2$ exists, and Proposition~\ref{prop:amplify} then gives the bounded
remainder required by Theorem~\ref{thm:stability}.

After stability, the problem is finite apart from the leaf counts.
Section~\ref{sec:extremality} first counts the bounded cores of
$K_{d,n-d}$. It then shows that any nonempty remainder gives a second
family of the same leading order, while an extra edge among the $d$ common
neighbours gives a new family of cores. Thus every other bounded-remainder
graph has strictly more types for large $n$. Applying this comparison to a
minimizer proves Theorem~\ref{thm:main}.

Figure~\ref{fig:proof} records this chain of ideas.

\begin{figure}[!htbp]
\centering
\begingroup
\setstretch{1}
\begin{tikzpicture}[
  casebox/.style={draw,line width=.4pt,align=center,
    text width=4.35cm,inner xsep=5pt,inner ysep=4pt,
    font=\footnotesize,outer sep=1pt},
  widebox/.style={draw,line width=.4pt,align=center,
    text width=9.6cm,inner xsep=6pt,inner ysep=4pt,
    font=\footnotesize,outer sep=1pt},
  arrow/.style={-{Latex[length=1.6mm,width=1mm]},line width=.45pt}
]
\node[widebox] (start) {Assume $\tauiso(G)\le Cn^{d-1}$ and consider the classes $X(S)$.};

\node[casebox,below=6mm of start,xshift=-2.7cm] (large) {
  \textbf{Case 1:} some class is large,\\[-1mm]
  $|X(S)|\ge n/2$.};
\node[casebox,below=6mm of start,xshift=2.7cm] (small) {
  \textbf{Case 2:} every class is small.};

\node[casebox,below=6mm of large] (large2) {
  Proposition~\ref{prop:amplify} then gives a bounded remainder.};
\node[casebox,below=6mm of small] (small2) {
  Short DFS paths and the small-class bound give many equal-size nontrivial pieces.\\[1mm]
  Their local freedom yields $\tauiso(G)\ge n^d/(\log n)^{O_d(1)}$, a contradiction.};

\node[widebox,below=10mm of $(large2.south)!0.5!(small2.south)$] (stable) {
  Hence a large class exists, and therefore $G$ has a bounded remainder.\\[-1mm]
  This is Theorem~\ref{thm:stability}.};

\node[widebox,below=5mm of stable] (main) {
  The bounded-remainder comparison shows that every graph other than $K_{d,n-d}$\\[-1mm]
  has more tree types. Hence $K_{d,n-d}$ is the unique minimizer.};

\draw[arrow] (start.south) -- (large.north);
\draw[arrow] (start.south) -- (small.north);
\draw[arrow] (large.south) -- (large2.north);
\draw[arrow] (small.south) -- (small2.north);
\coordinate (mergeL) at (large2.south |- stable.north);
\coordinate (mergeR) at (small2.south |- stable.north);
\draw[arrow] (large2.south) -- (mergeL);
\draw[arrow] (small2.south) -- (mergeR);
\draw[arrow] (stable.south) -- (main.north);
\end{tikzpicture}
\endgroup
\caption{Roadmap of the proof.}
\label{fig:proof}
\end{figure}

\section{Counting tools}\label{sec:tools}

The stability proof uses three tools from this section. First,
Lemma~\ref{lem:multiroot} shows that every nontrivial piece has a set of
branch profiles of affine dimension at least $d$. Second,
Lemma~\ref{lem:affine} shows that many such local choices still give many
different sums, even when the pieces themselves cannot be distinguished.
Third, Proposition~\ref{prop:path} shows that a long path already creates
many spanning-tree types; under the hypothesis $\tauiso(G)\le Cn^{d-1}$,
it implies that every DFS tree has polylogarithmic height.

We develop these tools in that order. The first two are local counting
statements; the third is the global path bound used throughout the stability
argument.

\subsection{Rooted trees and forest profiles}

We first describe the choices inside one piece. Let $C$ be a connected
graph and $S$ a disjoint set of labelled roots; ignore edges within $S$.
An \emph{$S$-rooted spanning forest} has exactly one vertex of $S$ in
each component, with unused roots allowed to be isolated. Its
\emph{branch profile} records, for each pair $(s,R)$, the number of
branches at $s$ of rooted tree type $R$, rooted at the neighbour of $s$.
For a single root, the profile determines the rooted tree. All affine
dimensions in this section are over $\mathbb R$.

\begin{lemma}\label{lem:multiroot}
Suppose $|C|\ge2$, at least one edge joins $C$ to $S$, and each vertex of
$C$ has degree at least $d\ge3$ in $C\cup S$. The profiles of the
$S$-rooted spanning forests have affine dimension at least $d$.
\end{lemma}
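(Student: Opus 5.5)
The plan is to exhibit $d+1$ explicit $S$-rooted spanning forests whose profiles are affinely independent. Because profiles are nonnegative integer vectors indexed by pairs $(s,R)$, it suffices to find profiles $p_0,\dots,p_d$ such that the differences $p_i-p_0$ are linearly independent. I will aim for differences that are supported on pairwise distinct coordinates, up to a triangular pattern, so that independence is read off directly.

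First I choose the reference forest. Fix an edge $us_0$ with $u\in C$ and $s_0\in S$; it exists by hypothesis. Take a spanning tree $T_C$ of $C$. Since $|C|\ge2$, $T_C$ has at least two leaves, so I can pick a leaf $v\neq u$. Then $C-v$ is connected and contains $u$. Let $T'$ be a spanning tree of $C-v$, and let $R_0$ be the rooted type of $T'$ rooted at $u$. For every neighbour $w$ of $v$ in $C\cup S$, I form the forest $F_w$: it consists of $T'$ hung from $s_0$ via $uv$'s analogue, the edge $us_0$, together with the edge $vw$. The degree hypothesis gives at least $d$ choices of $w$.

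There are two kinds of choices.
\begin{itemize}
\item If $w\in S$, the profile is $e_{(s_0,R_0)}+e_{(w,K_1)}$, where $K_1$ is the one-vertex rooted tree.
\item If $w\in C-v$, the profile is the single unit vector $e_{(s_0,R_w)}$. Here $R_w$ is the type of $T'+vw$ rooted at $u$, a rooted tree on $|C|$ vertices.
\end{itemize}
For $w=s_0$ the coordinate $(s_0,K_1)$ is still new, because $R_0$ has $|C|-1\ge1$ vertices and so differs from $K_1$. The unit vectors $e_{(s,K_1)}$, $e_{(s_0,R_0)}$ and $e_{(s_0,R_w)}$ involve rooted trees of orders $1$, $|C|-1$ and $|C|$ respectively. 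Hence all these profiles are linearly independent, and therefore affinely independent, provided the types $R_w$ for $w\in N_C(v)$ are pairwise distinct. In that case I obtain $|N(v)|\ge d$ affinely independent profiles, which gives affine dimension at least $d-1$.

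I still need one more independent profile. For this I use a forest of a different shape, namely one with $v$ forming its own branch while the tree on $C-v$ is also present. If $v$ has a neighbour in $S$, I take a forest in which $T'$ is attached at a different root, or through a different vertex $u'$ adjacent to $S$. If $N(v)\subseteq C$, I instead split $T'+vw$ into two branches at $s_0$; this produces a profile with total branch count $2$. A linear functional such as ``number of branches'' or ``total order of branches at roots other than $s_0$'' then separates this profile from the affine hull of the earlier ones.

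The main obstacle is the collision problem: distinct $w\in N_C(v)$ may give isomorphic rooted trees $R_w$, for instance when $T'$ has automorphisms fixing $u$. My first attempt is to choose $T'$ as a BFS tree from $u$, so that neighbours $w$ at different depths give distinct depths of $v$ and hence distinct $R_w$. Collisions among neighbours at equal depth would then be compensated by varying $T'$ itself, for example by rerouting one edge so that a chosen $w$ becomes a leaf or acquires a different depth. If this bookkeeping gets out of hand, the fallback is induction on $|C|$. I would contract or delete $v$ and apply the statement to the piece $C-v$ with root set $S\cup\{v\}$, where the degree condition still holds. Then I would lift the resulting $d+1$ affinely independent profiles by attaching $v$. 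Lifting is injective on profiles after composing with a fixed attachment, so affine independence is preserved.
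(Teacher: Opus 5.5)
Your proposal has genuine gaps. The two steps you leave open, the extra $(d+1)$-th profile and the collision problem, are where the content of the lemma lies, and the routes you sketch for them fail.

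\textbf{The extra profile.} The hypothesis allows $C$ to meet $S$ in a single edge $us_0$. Then every $S$-rooted spanning forest is one branch containing all of $C$, rooted at $u$ and hung from $s_0$. There is no forest in which $v$ is its own branch, and no way to split $T'+vw$ into two branches at $s_0$. All profiles are unit vectors $e_{(s_0,R)}$, so you need $d+1$ distinct rooted spanning-tree types of $(C,u)$. Your leaf-attachment family supplies at most $\deg_C(v)$ of them. For $C=K_{d+1}$ with one pendant edge to $S$ this is exactly $d$, giving affine dimension only $d-1$.

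When more boundary edges exist, your separating functionals are not constant on the first $d$ profiles once $v$ has neighbours both in $S$ and in $C$, so they separate nothing. Take $C=K_2=\{u,v\}$, $d=3$ and $N_S(u)=N_S(v)=\{s_0,s_1\}$. Your profiles are $e_{(s_0,P_2)}$, $2e_{(s_0,K_1)}$ and $e_{(s_0,K_1)}+e_{(s_1,K_1)}$; here $R_0=K_1$, contrary to your order count. The suggested extra forest, with $T'$ at $s_1$ and $v$ alone, has profile $2e_{(s_1,K_1)}=2\bigl(e_{(s_0,K_1)}+e_{(s_1,K_1)}\bigr)-2e_{(s_0,K_1)}$, which lies in their affine hull. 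What works here is the single-branch profile $e_{(s_1,P_2)}$ at the other root.

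\textbf{Collisions and the fallback.} The collision problem is also unresolved, and a BFS tree does not settle it. In $C=K_4$ with one pendant edge, taking $T'$ to be the star at $u$ in $C-v=K_3$ gives $R_a\cong R_b$.

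The inductive fallback is unsound. Attaching $v$ merges the branches at $v$ into one new branch, or grafts them into an existing branch. Neither is an affine operation on profiles. Moreover, an injective map on a finite point set need not preserve affine independence.

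\textbf{What is missing.} You need a lower bound on rooted spanning-tree types in terms of cycle rank, as in Lemma~\ref{lem:surplus}. In a DFS tree, re-parenting vertices along non-tree edges to strict ancestors strictly decreases the depth sum, so $q(C,u)\ge e(C)-|C|+2$. In the single-edge case the degree condition makes this at least $d+1$ (Lemma~\ref{lem:oneroot}).

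The same DFS idea repairs your collision step. If $T_C$ is a DFS tree rooted at $u$ and $v$ is a leaf of it, every $C$-neighbour of $v$ is an ancestor, and these lie at distinct depths. The level sizes of $T'+vw$ then distinguish the $R_w$.

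With these counts in hand, the paper finishes by a case analysis on the number of roots met by $C$ and on whether $C$ contains a cycle. It uses single-branch profiles at different roots, and adds at most two profiles made of smaller branches.
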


We start with a count for rooted trees. Each non-tree edge of a DFS
tree can be used to decrease the sum of distances from the root.
This produces different rooted types. We will use these types to
build profiles with one branch, then add profiles with several branches
to obtain the required dimension. Write $q(F,r)$ for the number of
rooted spanning-tree types of $(F,r)$.

\begin{lemma}\label{lem:surplus}
Every connected rooted graph $(F,r)$ satisfies
\[
 q(F,r)\ge e(F)-v(F)+2.
\]
\end{lemma}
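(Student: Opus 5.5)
We need to show that the number of rooted spanning-tree types of $(F,r)$ is at least the cyclomatic number plus one, that is, $q(F,r)\ge e(F)-v(F)+2$. Following the hint in the text, the plan is to take a DFS tree $T_0$ of $F$ rooted at $r$ and use the statistic
\[
 \Phi(T)=\sum_{v}\operatorname{dist}_T(r,v).
\]
This is an invariant of the rooted type. It therefore suffices to exhibit $k+1$ spanning trees, where $k=e(F)-v(F)+1$ is the number of non-tree edges, with pairwise distinct values of $\Phi$.

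Since $T_0$ is a DFS tree, every non-tree edge $e_i=x_iy_i$ joins a vertex $y_i$ to a proper ancestor $x_i$, with $\operatorname{depth}(y_i)\ge \operatorname{depth}(x_i)+2$. I would order the non-tree edges $e_1,\dots,e_k$ by a suitable rule, for instance by decreasing depth of the lower endpoint with ties broken by DFS order. Then build $T_j$ from $T_{j-1}$ by adding $e_j$ and deleting the edge from $y_j$ to its current parent. This re-hangs the subtree below $y_j$ closer to the root, so $\Phi$ strictly decreases. For this to work, each swap must still be valid and strictly decreasing after the earlier swaps. The key point is that the earlier swaps only moved vertices that are not ancestors of $y_j$ in a way that lengthens the path to $x_j$. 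I would make this precise as follows. Process the non-tree edges so that, at step $j$, the $T_{j-1}$-path from $r$ to $y_j$ still passes through $x_j$ and has length at least $\operatorname{dist}_{T_{j-1}}(r,x_j)+2$. Alternatively, and perhaps more simply, process them in order of increasing depth of $y_i$ (top-down). Then each swap only shortens the distances of descendants and never destroys the ancestor relation used later.

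The main obstacle is this order-independence issue. After several swaps, $x_j$ may no longer be an ancestor of $y_j$, or the depth gap may shrink to one, so that the swap no longer decreases $\Phi$. The fallback I would try first avoids the sequential process altogether. For each $i$, consider the single swap $T_0+e_i-(\text{parent edge of }y_i)$. Each of these trees has $\Phi<\Phi(T_0)$, but their values might coincide, so this alone does not suffice.

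The sequential top-down version seems right. Choose the $e_i$ so that the lower endpoints $y_i$ are visited in DFS preorder. When we re-hang at $y_j$, the earlier swaps involved vertices that are either not ancestors of $y_j$, or ancestors of $y_j$ whose hanging only shortened the path to $y_j$ together with $x_j$. In the latter case, all ancestors of $y_j$ that are descendants of $x_j$ move as a block, so the depth gap between $x_j$ and $y_j$ is preserved. One must check carefully the case where $x_j$ itself lies in a re-hung block while $y_j$ does not. In preorder this cannot happen, because any block containing $x_j$ also contains its descendant $y_j$. I would verify this block-movement invariant by induction. The result is a strictly decreasing chain $\Phi(T_0)>\Phi(T_1)>\dots>\Phi(T_k)$, which gives the $k+1$ rooted types required.
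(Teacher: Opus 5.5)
Your final plan, processing the lower endpoints top-down (increasing depth, or DFS preorder), fails. The block-movement invariant you intend to prove by induction is false. The harmful case is not a re-hung block containing $x_j$ but not $y_j$; it is a block containing $y_j$ but not $x_j$. In top-down order, a vertex $w$ strictly between $x_j$ and $y_j$ on the original ancestor chain is handled before $y_j$. Re-hanging $w$ higher lifts $y_j$ while leaving $x_j$ where it is. The gap $\operatorname{depth}(y_j)-\operatorname{depth}(x_j)$ therefore shrinks, possibly to $1$ or below, and the swap at $y_j$ no longer decreases $\Phi$.

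For example, let $F$ be the path $v_0v_1v_2v_3v_4$ rooted at $v_0$ (itself a DFS tree) plus the edges $v_0v_2$ and $v_1v_3$, so $e(F)-v(F)+2=3$. Top-down, you first re-hang $v_2$ at $v_0$, and $\Phi$ drops from $10$ to $7$. You then move $v_3$ from $v_2$ to $v_1$, which are both at depth $1$, so $\Phi$ stays $7$. In fact both trees are a root with one leaf child and one child heading a three-vertex path, so you obtain only two rooted types. With the path $v_0\cdots v_6$ and non-tree edges $v_0v_4,v_3v_6$, the second swap even increases $\Phi$. You also never fix the order among several non-tree edges at the same lower endpoint; if their upper endpoints are taken from shallow to deep, $\Phi$ increases as well.

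The repair is the order you first suggested and then abandoned, which is exactly the paper's proof. Process lower endpoints in decreasing original depth, and at each $y$ use its non-tree ancestor neighbours in decreasing original depth. Every current parent is a strict original ancestor, so each intermediate graph is a spanning tree. When $y$ is processed, none of its original ancestors has yet been re-parented, so their paths to $r$ are unchanged. Each new parent of $y$ is therefore strictly shallower than the current one, and the current subtree of $y$ rises as a block. Hence $\Phi$ strictly decreases at each of the $e(F)-v(F)+1$ steps.
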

\begin{proof}
Take a DFS tree $T$ rooted at $r$. Process its nonroot vertices in decreasing
original depth. At a vertex $y$, replace its parent successively by its
non-tree ancestor neighbours, also in decreasing original depth.

Every step preserves a tree, since every parent remains a strict ancestor
in the original tree. When $y$ is processed, none of its original ancestors
has been processed, so their paths to $r$ are unchanged. Each new parent of
$y$ is strictly closer to $r$ than the preceding one. The replacement
therefore strictly decreases the sum of distances from $r$.

Each non-tree edge is used exactly once, at its lower endpoint. There are
$e(F)-v(F)+1$ replacements. The initial tree and all the intermediate trees
have different distance sums, so they have distinct rooted types.
\end{proof}

A connected graph $F$ containing a cycle has $e(F)\ge v(F)$, so the lemma gives the
following immediate consequence. We will use it both for local forest
choices and for branches attached to a DFS path.

\begin{corollary}\label{cor:cyclic}
A connected rooted graph with a cycle has at least two rooted
spanning-tree types.
\end{corollary}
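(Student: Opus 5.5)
The plan is to apply Lemma~\ref{lem:surplus} directly and use the fact that a cycle makes the edge surplus nonnegative. Let $(F,r)$ be a connected rooted graph containing a cycle.

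The key step is the edge count $e(F)\ge v(F)$. A spanning tree of $F$ has exactly $v(F)-1$ edges. Any spanning tree omits at least one edge of the cycle, since a tree contains no cycle. Hence $F$ has at least one edge outside that spanning tree, and $e(F)\ge v(F)$.

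Substituting into Lemma~\ref{lem:surplus} gives
\[
  q(F,r)\ge e(F)-v(F)+2\ge 2,
\]
which is the claim.

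No step is a genuine obstacle. The only point to check is that Lemma~\ref{lem:surplus} already produces \emph{distinct rooted types}, not merely distinct labelled trees. This holds because the types there are separated by the rooted invariant ``sum of distances from $r$'', which is unchanged under root-preserving isomorphism. So no further distinguishing argument is required.

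Concretely, when there is exactly one non-tree edge, the two rooted types are the DFS tree itself and the tree after the single parent replacement, and they have different distance sums.
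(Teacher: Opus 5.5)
Your proposal is correct and matches the paper's argument: the paper also deduces the corollary immediately from Lemma~\ref{lem:surplus} using the fact that a connected graph with a cycle has $e(F)\ge v(F)$. Your extra remarks, justifying this edge count via a spanning tree and noting that the distance-sum invariant already separates rooted types, are accurate and simply make explicit what the paper leaves implicit.
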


The rooted count gives profiles with a single branch. We also need
profiles that split the vertices into smaller branches. Two observations
show that one or two additional profiles will be enough.

First, if $H-r$ is connected, then $H$ has a spanning tree with root degree
$j$ for every $1\le j\le\deg_H(r)$. Choose $j$ neighbours of $r$, grow a
spanning forest of $H-r$ with one chosen neighbour in each component, and
join those neighbours to $r$.

Second, suppose there are $q$ different profiles consisting of one branch
that contains all vertices outside the root. These are distinct unit
vectors, so their affine dimension is $q-1$. One profile consisting only
of smaller branches raises the dimension to $q$; two distinct such profiles
raise it to $q+1$. Indeed, a coordinate describing a branch on all the vertices does
not occur in a profile with several smaller branches.

We first apply these observations at one root. The degree condition
gives enough profiles with one branch, and changing the root degree
gives profiles with several branches.

\begin{lemma}\label{lem:oneroot}
Let $d\ge3$ and let $(H,r)$ be a connected rooted graph with at least two
vertices. If every nonroot vertex has degree at least $d$, its spanning-tree
branch profiles have affine dimension at least $d$.
\end{lemma}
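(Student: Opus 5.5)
The plan is to split according to whether $H-r$ is connected, and to handle the disconnected case by induction on $v(H)$.

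\emph{Reduction to $H-r$ connected.} Suppose $H-r$ has components $C_1,\dots,C_k$ with $k\ge2$, and put $H_i=H[C_i\cup\{r\}]$. Each $(H_i,r)$ is connected with at least two vertices. Every nonroot vertex of $H_i$ keeps all its neighbours, so it still has degree at least $d$. Moreover $v(H_i)<v(H)$. A spanning tree of $H$ is exactly a union of spanning trees of the $H_i$, so its branch profile is the sum of their profiles. Hence the set of profiles of $(H,r)$ is the Minkowski sum of the profile sets of the $(H_i,r)$. The affine dimension of a Minkowski sum is at least that of any summand, since we can translate by a fixed choice in the other summands. Induction therefore gives dimension at least $d$, and it remains to treat $H-r$ connected.

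\emph{The connected case.} Set $F=H-r$, $v=v(F)$ and $t=\deg_H(r)\ge1$. I use the two observations stated before the lemma.

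One-branch profiles come from spanning trees with root degree $1$. Fix a neighbour $u$ of $r$. Each rooted spanning-tree type of $(F,u)$ gives a distinct one-branch profile. By Lemma~\ref{lem:surplus}, we get $q\ge e(F)-v+2$ such profiles. Since $H$ has minimum nonroot degree $d$, the degree sum of $F$ is at least $dv-t$. This gives
\[
q\ \ge\ \Bigl\lceil \tfrac{dv-t}{2}\Bigr\rceil - v + 2 .
\]

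Multi-branch profiles come from root degree $j$, for each $2\le j\le t$. These exist by the first observation because $F$ is connected. Profiles with different root degrees have different numbers of branches, so they are distinct. By the second observation, the dimension is at least $q-1+\min(t-1,2)$.

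We need this to be at least $d$, and we split on $t$.
\begin{itemize}
\item \textbf{$t\ge3$.} Here we need $q\ge d-1$. Using $t\le v$ and $v\ge d$ (a vertex of $F$ together with its neighbours), we get $q\ge (d-3)d/2+2\ge d-1$, since $(d-2)(d-3)\ge0$.
\item \textbf{$t=2$.} Here we need $q\ge d$. The bound gives $q\ge\lceil((d-2)v+2)/2\rceil$ with $v\ge d$. This is at least $d$ for all $d\ge3$; for $d=3$ the ceiling is what is needed.
\item \textbf{$t=1$.} Here we need $q\ge d+1$. A vertex of $F$ other than $u$ has all $d$ neighbours inside $F$, so $v\ge d+1$. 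Then $q\ge\lceil((d-2)v+3)/2\rceil\ge\lceil (d^2-d+1)/2\rceil\ge d+1$; for $d=3$ this is again saved by integrality.
\end{itemize}

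\emph{Main obstacle.} I expect the main difficulty to be exactly these small cases ($d=3$, $t\le2$). There the edge-count bound from Lemma~\ref{lem:surplus} is tight only up to rounding, so the lower bounds on $v$ and the parity or ceiling argument must be checked carefully. If some subcase falls one short, the first thing I would try is to use a second neighbour $u'$ of $r$ when $t=2$. A rooted type of $(F,u')$ that is not a rooted type of $(F,u)$ supplies an extra one-branch profile. Otherwise I would use a sharper degree count, since the vertices not adjacent to $r$ have degree at least $d$ in $F$.
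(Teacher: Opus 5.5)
Your proof is correct and is essentially the paper's argument. You reduce to $H-r$ connected, take the one-branch profiles from Lemma~\ref{lem:surplus} at a fixed neighbour of $r$, bound their number via $2e(F)+\deg_H(r)\ge d\,v(F)$ separately for root degree $1$, $2$ and at least $3$, and add profiles of root degrees two and three. Your case computations already close, including $d=3$ by integrality, so the fallback ideas in your last paragraph are not needed.
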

\begin{proof}
It is enough to prove the assertion when $C:=H-r$ is connected. Otherwise,
apply that case to one component together with $r$, and fix the choices
on all other components.

Put $t=|C|$, $D=\deg_H(r)$, and $e=e(C)$. Then
\[
 t\ge d,\qquad 2e+D\ge dt.
\]
Root $C$ at a neighbour of $r$. Lemma~\ref{lem:surplus} gives at least
$q:=e-t+2$ profiles with one branch containing all of $C$. If $D\le2$, then
$t\ge d+1$: when $t=d$, every vertex of $C$ would have to meet $r$.
The resulting bounds on $q$ are
\[
\begin{array}{c|ccc}
 D&1&2&\ge3\\ \hline
 q&\ge d+1&\ge d&\ge d-1.
\end{array}
\]
For $D=1,2$, use $q\ge((d-2)t-D)/2+2$ and the fact that $q$ is an integer.
For $D\ge3$, use $D\le t$ to obtain $q\ge(d-3)t/2+2$.

If $D=1$, these unit profiles already have dimension at least $d$.
If $D=2$, add a tree of root degree two. If $D\ge3$, add trees of root
degrees two and three. The added profiles have two or three branches,
all smaller than $C$, so the observation above gives dimension at
least $d$ in each case.
\end{proof}

We now return to the forest bound with several roots. Attaching the
same rooted tree to different roots gives different profile coordinates.
These choices usually suffice; in the remaining cases, we split $C$
into two branches.

\begin{proof}[Proof of Lemma~\ref{lem:multiroot}]
Let $a$ be the number of roots adjacent to $C$, and call a vertex of $C$
with a neighbour in $S$ a boundary vertex. Let $q$ be the number of unit
profiles $(s,R)$, where $R$ is a rooted spanning-tree type of all of $C$
at a neighbour of $s$. If $a=1$, apply Lemma~\ref{lem:oneroot}.
We may therefore assume that $a\ge2$.

As above, $q\ge d+1$ is enough. If $q=d$, we only need one more
profile, with at least two branches. Two boundary vertices give such a profile: take a spanning tree
of $C$, cut an edge on the path between them, and attach the two pieces
to roots through the two boundary vertices. The roots may coincide.

\smallskip
\noindent\emph{$C$ contains a cycle.}
Corollary~\ref{cor:cyclic} gives two rooted types at a neighbour of each
root, so $q\ge2a$. If $2\le a\le d-3$, then
$\delta(C)\ge d-a\ge3$. Lemma~\ref{lem:oneroot}, applied at a boundary
vertex for each root, gives
\[
 q\ge a(d-a+1)>d.
\]
If $a\ge d-2$, then $2a\ge d+1$, except when $(d,a)=(4,2)$.
In that case $q\ge4$, so two boundary vertices provide the one additional
profile needed. If there is only one boundary vertex $v$, then
$\deg_C(v)\ge2$, and every other vertex has degree at least four in $C$.
Thus $e(C)\ge2|C|-1$. Lemma~\ref{lem:surplus} gives at least
$|C|+1\ge3$ rooted types at $v$ for each of the two roots, so $q\ge6$.

\smallskip
\noindent\emph{$C$ is a tree.}
Each leaf of $C$ meets at least $d-1$ roots. Hence $a\ge d-1$, and there
are at least two boundary vertices. If $a\ge d+1$, then $q\ge a$ is enough.
If $a=d$, use $q\ge d$ and a profile with smaller branches.
The same argument works when $a=d-1$ and $q\ge d$.

It remains that $a=q=d-1$. Cut any edge of $C$. Each piece contains an
original leaf, and each such leaf is adjacent to all $d-1$ roots. Attach
both pieces through these leaves to one root $x$, or both to a different
root $y$. The resulting two profiles use only smaller branches and are
distinct. Together with the $d-1$ unit profiles, they give affine dimension
$d$.
\end{proof}

Only the distinction between root labels matters. Replacing them by
any other distinct labels preserves affine dimension. Later the roots
lie on one ancestor chain, so their distances from a fixed vertex will
serve as labels.

\subsection{Adding independent choices}

We have enough choices on each piece. We now count their sums, since the
pieces themselves may become indistinguishable in the whole tree. If every
piece had the same $r+1$ affinely independent choices, the sum would be
determined by how many pieces use each choice, giving
$\binom{m+r}{r}$ possibilities by stars and bars. The next lemma shows that
the same bound holds even when the available choices vary from piece to
piece.

\begin{lemma}\label{lem:affine}
Let $A_1,\ldots,A_m$ be finite subsets of $\mathbb R^U$, where $U$ is finite.
If each $A_i$ has affine dimension at least $r\ge1$, then
\[
 |A_1+\cdots+A_m|\ge\binom{m+r}{r}.
\]
\end{lemma}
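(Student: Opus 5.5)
We argue by induction on $m+r$, using a linear functional to split off extreme points. The case $m=1$ is immediate: an affine dimension of at least $r$ forces $|A_1|\ge r+1=\binom{1+r}{r}$. More generally, the case $r\ge1$, $m\ge1$ will follow from the Pascal recursion
\[
\binom{m+r}{r}=\binom{m-1+r}{r}+\binom{m+r-1}{r-1}.
\]
We may also reduce each $A_i$ to exactly $r+1$ affinely independent points whenever convenient. Shrinking the sets only shrinks the sumset, so this reduction is harmless.

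The key step is to choose a generic linear functional $\phi$ on $\mathbb R^U$ that is injective on each $A_i$ and on all relevant finite sums. Let $a_i$ be the $\phi$-maximal point of $A_i$, and put $A_i'=A_i\setminus\{a_i\}$, which has affine dimension at least $r-1$. We then produce two disjoint subsets of the sumset. The first is
\[
P=A_1+\cdots+A_{m-1}+a_m .
\]
It has size at least $\binom{m-1+r}{r}$ by induction on $m$. The second is
\[
Q=\bigcup_{j}\bigl(A_1+\cdots+A_{j-1}+A_j'+a_{j+1}+\cdots+a_m\bigr)
\]
or, more simply, some set in which the $m$-th coordinate is forced to be a non-maximal choice. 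The difficulty is that a naive choice such as $A_1+\cdots+A_{m-1}+A_m'$ is not obviously disjoint from $P$. Disjointness by $\phi$-value alone fails because the sums from different pieces mix.

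To fix this I would use an ordering argument in the style of Cauchy--Davenport/Kneser proofs. The idea is to sort all points of $A_1+\cdots+A_m$ by $\phi$ and show that there are at least $\binom{m+r-1}{r-1}$ sums that do not lie in a fixed translate of a sum over $m-1$ sets. A cleaner alternative, which I would try first, is to project. Choose an affine hyperplane direction: let $\pi$ be a linear map killing the direction $a_i-b_i$ for a suitable pair in each $A_i$. This will not be consistent across different $i$, however, so instead I would induct on $r$ via a lexicographic argument. Order $\mathbb R^U$ lexicographically with respect to a generic basis, so that the ordering is compatible with addition. Then, for each $i$, fix an affinely independent set $\{x_{i,0}<x_{i,1}<\dots<x_{i,r}\}\subseteq A_i$.

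The main step, and the expected obstacle, is to show that the $\binom{m+r}{r}$ sums $\sum_i x_{i,k_i}$ with $k_1\le k_2\le\dots\le k_m$ (or some chain-like subfamily) are distinct. In the identical-set case these sums are distinct by affine independence, since the multiset $\{k_i\}$ is recovered from the barycentric coordinates. In the varying case I would instead use the compressing argument. Set $B=A_1+\dots+A_{m-1}$, with $|B|\ge\binom{m-1+r}{r}$ by induction. Then show that $|B+A_m|\ge|B|+\binom{m-1+r}{r-1}$ by exhibiting, for each of the $r$ nonminimal points $x_{m,k}$, new sums beyond $\max(B+x_{m,k-1})$ in the lexicographic order. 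These counts must be obtained via the induction hypothesis in dimension $r-1$, applied to the faces $\{x_{i,1},\dots,x_{i,r}\}$. The delicate point is verifying that these new sums are genuinely not already counted. I expect this to require the genericity of the order together with a careful inclusion argument along the lines of the Pascal identity.
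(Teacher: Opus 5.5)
Your proposal does not yet contain a proof. Every route you list stops at the step that carries the content: finding $\binom{m+r-1}{r-1}$ sums outside $P=A_1+\cdots+A_{m-1}+a_m$, or equivalently showing $|B+A_m|\ge\binom{m+r}{r}$ for $B=A_1+\cdots+A_{m-1}$. The mechanisms you propose for this step fail.

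First, the chain family $\sum_i x_{i,k_i}$ with $k_1\le\cdots\le k_m$ need not be injective once the $A_i$ differ. Take $A_1=\{(0,0),(1,0),(2,1)\}$ and $A_2=\{(0,0),(0,1),(1,1)\}$, each listed in increasing order for the generic functional $x+\varepsilon y$ with small $\varepsilon>0$. The tuples $(0,2)$ and $(1,1)$ both give $(1,1)$, so only $5<\binom{4}{2}$ sums arise. Barycentric coordinates recover the multiset $\{k_i\}$ only when all the $A_i$ share one simplex.

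Second, the face sums $F_1+\cdots+F_m$, with $F_i=\{x_{i,1},\dots,x_{i,r}\}$, do have at least $\binom{m+r-1}{r-1}$ elements by induction, but they need not avoid $B+x_{m,0}$. For $A_1=\{(0,0),(1,0),(1,1)\}$ and $A_2=\{(0,0),(0,1),(5,0)\}$, with the same ordering rule, $x_{1,2}+x_{2,0}=(1,1)=x_{1,1}+x_{2,1}$. Separating the two sets would need a single functional exposing every $F_i$ as a face, which means parallel facets, and varying sets do not have them.

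Third, an order compatible with addition by itself certifies only one new sum for each $k$, such as $\max B+x_{m,k}$. That gives $r$ new sums, not $\binom{m+r-1}{r-1}$.

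The inequality you need, $|B+A_m|\ge\binom{m+r}{r}$ whenever $|B|\ge\binom{m+r-1}{r}$, is in fact true, but it is a vertex-isoperimetric theorem in its own right. Replace $A_m$ by a simplex $\{x_0,\dots,x_r\}$, split $B$ into cosets of the lattice $\sum_k\mathbb Z(x_k-x_0)$, and translate the pieces far apart inside one copy of $\mathbb Z^r$. After homogenising, the required bound is Macaulay's theorem that lexicographic segments minimise the upper shadow of a set of monomials of fixed degree. So your route can be completed, but only by importing that theorem, not by the order and face counting you sketch.

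The paper avoids every injectivity question. After a generic linear map $L$ to $\mathbb R^r$ that keeps each $L(A_i)$ full-dimensional, the space $V_i=\operatorname{span}\{\exp\langle La,y\rangle:a\in A_i\}$ contains series $1+O(|y|^2)$ and $y_j+O(|y|^2)$. Taking one factor from each $V_i$ realises every lowest term $y^\alpha$ with $|\alpha|\le m$. This gives $\binom{m+r}{r}$ linearly independent series in the span of the $\exp\langle Ls,y\rangle$ with $s\in A_1+\cdots+A_m$, and that span has dimension at most the size of the sumset. Coincidences among sums cost nothing there, and they are exactly what your combinatorial decompositions cannot control.
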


The proof represents each choice by a formal exponential series.
The dimension assumption lets us choose the constant and linear terms.
Products with different lowest-degree terms then give the desired bound.

\begin{proof}[Proof of Lemma~\ref{lem:affine}]
Choose a linear map $L:\mathbb R^U\to\mathbb R^r$ such that each
$L(A_i)$ has affine dimension $r$. Such a map exists: choose $r$
linearly independent differences from each $A_i$, and require the
determinant of their images to be nonzero. Each determinant is a
nonzero polynomial in the entries of $L$, and finitely many such
polynomials can be nonzero at the same point.
In the formal power-series ring in $y_1,\ldots,y_r$, put
\[
 V_i=\operatorname{span}\{\exp\langle La,y\rangle:a\in A_i\}.
\]
The constant and linear terms of elements of $V_i$ span
$\mathbb R\oplus\mathbb R^r$. Thus $V_i$ contains series
\[
 f_{i,0}=1+O(|y|^2),\qquad
 f_{i,j}=y_j+O(|y|^2)\quad(1\le j\le r).
\]
For every $\alpha\in\mathbb Z_{\ge0}^r$ with $|\alpha|\le m$, form a
product with one factor from each $V_i$, using $f_{i,j}$ a total of
$\alpha_j$ times and $f_{i,0}$ otherwise. Its term of lowest degree is
$y^\alpha$. The resulting $\binom{m+r}{r}$ products are linearly
independent: in any nonzero relation, consider the terms of smallest
total degree.

All these products lie in the span of
\[
 \exp\langle L(a_1+\cdots+a_m),y\rangle,
 \qquad a_i\in A_i.
\]
This span has dimension at most $|A_1+\cdots+A_m|$, proving the lemma.
\end{proof}

\subsection{Switches along a DFS path}

Our third tool is global. A long DFS path already creates many spanning-tree
types, so under the assumption $\tauiso(G)=O(n^{d-1})$ every DFS tree must
have small height. The proof uses two sources of choices along a root-to-leaf
path: non-tree edges can be switched, while branches leaving the path can be
changed or reattached. The next proposition records the bound we need. Write $L(G)$ for the maximum order of a simple path in $G$. 

\begin{proposition}\label{prop:path}
There is an
absolute constant $c>0$ such that every connected $n$-vertex graph $G$ with
$\delta(G)\ge3$ satisfies
\[
 \tauiso(G)\ge n^{-2}\exp\bigl(cL(G)^{1/4}\bigr).
\]
Consequently, for fixed $d\ge3$ and $C>0$, the bound
$\tauiso(G)\le Cn^{d-1}$ implies
\[
 L(G)=O_{d,C}\bigl((\log n)^4\bigr).
\]
\end{proposition}

We prove the two sources of choices separately and then combine them.

Call a vertex of a DFS tree $T$ \emph{active} if it is incident with an
edge of $E(G)\setminus E(T)$. The next lemma does not require a
minimum-degree assumption.

\begin{lemma}\label{lem:active}
There exists $c>0$ such that the following holds. Let $T$ be a DFS spanning tree
of a connected $n$-vertex graph $G$. If a root-to-leaf path of $T$ contains $M$
active vertices, then
\[
  \tauiso(G)\ge n^{-2}\exp(cM^{1/4}).
\]
\end{lemma}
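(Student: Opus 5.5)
Let $P=v_0v_1\cdots v_\ell$ be the root-to-leaf path of $T$ (with $v_0$ the root) containing $M$ active vertices. The idea is to build about $\exp(cM^{1/4})$ spanning trees that all share a long path with a prescribed pattern of pendant decorations, and then to show that these trees are non-isomorphic up to an $n^2$ loss. The loss accounts for forgetting which vertex is the root and which endpoint of the distinguished path is which.

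The key structural fact is that in a DFS tree every non-tree edge joins an ancestor to a descendant. Each active vertex $v_i$ on $P$ is therefore an endpoint of a non-tree edge $v_iw$, where $w$ is either an ancestor of $v_i$ (hence also on $P$) or a descendant lying in the subtree below $v_i$.

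\textbf{Step 1: disjoint gadgets.} Greedily select about $M^{1/2}$ active vertices along $P$ whose non-tree edges give pairwise disjoint, non-interleaving ``switch intervals.'' The choice is made by a Erd\H{o}s--Szekeres / Dilworth-type argument on the intervals $[i,j]$ spanned by the non-tree edges projected onto $P$. Among $M$ intervals, one can find either $\sqrt M$ that are pairwise disjoint, or $\sqrt M$ that are nested or overlapping with a common point. The common-point case is a chain of back-edges over a single vertex, and it is handled separately by a counting argument similar to Lemma~\ref{lem:surplus}.

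\textbf{Step 2: independent local switches.} Each selected non-tree edge $v_iw$ yields a local switch: add $v_iw$ and delete the tree edge from $v_i$ (or from $w$) to its parent. This rehangs a segment as a pendant path or subtree of a different length. Applying a subset of switches in disjoint intervals changes the tree only locally, so every subset, and more generally every combination of switch lengths, produces a spanning tree. The number of resulting \emph{sequences} of local shapes is exponential in the number of gadgets.

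\textbf{Step 3: recovering the sequence (the main obstacle).} Trees are compared only up to isomorphism, so the gadget sequence must be readable from the abstract tree. My plan is to restrict to a sub-family in which the spine stays a longest path, or at least a path determined up to $n^2$ choices of its endpoints, and in which the pendant decorations along it encode the switch vector. Since decorations can be confused when they coincide, I would encode the sequence as a multiset of gap lengths between consecutive decorated positions, that is, an integer partition.

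This encoding is where the exponent $1/4$ enters. With roughly $\sqrt M$ gadgets of variable lengths, the number of distinct partitions is about $\exp(c\sqrt{\sqrt M})=\exp(cM^{1/4})$, by the Hardy--Ramanujan lower bound on $p(k)$ with $k\approx\sqrt M$. Fixing the root and the orientation of the spine costs at most $n^2$, which gives
\[
\tauiso(G)\ge n^{-2}\exp(cM^{1/4}).
\]

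I expect Step 3 to be the hardest part. The hardest sub-issue is making the distinguished path canonically recoverable when descendant subtrees hanging off $P$ are long. I would handle this by working only with the path ordering of the decorations, and by absorbing any ambiguity in its choice into the $n^{-2}$ factor.
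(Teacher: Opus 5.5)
\textbf{Main gap: the common-point case.} You propose to handle it by a counting argument in the style of Lemma~\ref{lem:surplus}. That only yields about one new type per non-tree edge, i.e.\ linearly many, far from $\exp(cM^{1/4})$.

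This case contains the genuinely hard configuration: many of the chosen non-tree edges share one endpoint, say back-edges from $x_{j_1},\dots,x_{j_R}$ to a single ancestor $x_0$. Every such switch raises $\deg x_0$ by one, so degrees cannot tell which edges were switched.

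The paper therefore splits \emph{before} any interval argument. Let $F$ be the graph of chosen non-tree edges, one per active vertex, so $m\ge M/2$. Either some vertex has $F$-degree at least $\sqrt m$, giving $R=\Omega(\sqrt M)$ edges from one vertex to a tree path starting there, or $F$ has a matching of size $\Omega(\sqrt M)$.
\begin{itemize}
\item For the star, switch $k=\lfloor\sqrt R/4\rfloor$ of the edges; this cuts the path into $k$ branches at the centre. With the two path ends marked, the tree determines the multiset of branch orders, hence the cut positions up to $k!$ orderings. This gives $\binom Rk/k!\ge16^k$ marked types.
\item Only for the matching does the paper use your interval dichotomy. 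In the common-point subcase, discard the edge at $x_t$. The distinct upper endpoints then lie strictly before $x_t$, the prefix $x_0\cdots x_t$ is untouched, and the degrees along it record the switches independently.
\end{itemize}
If you keep your order (intervals first, then star versus matching inside the common-point case), the balanced threshold $\sqrt M$ is too weak. Splitting $\sqrt M$ edges through a point into a star of size $\alpha$ or a matching of size $\sqrt M/(2\alpha)$ balances to only $\exp(cM^{1/6})$ with these gadgets. You would need uneven thresholds, such as $M^{1/4}$ disjoint intervals versus $M^{3/4}$ intervals through a point.

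\textbf{Step 3 is not yet an argument.} The switch positions are fixed by $G$ and you only choose a subset of them. Nothing guarantees that the realizable multisets of gap lengths number anything like $p(\sqrt M)$, so Hardy--Ramanujan does not apply. Nor does the spine stay a longest path: switching a back-edge $x_ax_b$ deletes $x_{b-1}x_b$ and shortens the path between the marks.

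In the disjoint case none of this is needed. Mark $x_0$ and $x_N$ (the $n^2$ loss), and compare two different subsets at their first differing interval. Its upper endpoint is at the same distance from the first mark in both trees, because all earlier switches agree. Its degree differs by one, because every other switch changes degrees on $P$ only inside its own interval. This gives $2^{\Omega(\sqrt M)}$ marked types directly, with order information rather than a multiset.

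The multiset-of-branch-orders recovery you describe is exactly what the star case needs, not the disjoint case.
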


\claimsof{lem:active}
\begin{proof}
We count trees with two ordered marks. Changing the parents of distinct vertices to strict ancestors in the
original rooted tree still gives a spanning tree: each nonroot vertex
has one parent, and original depth decreases along every parent edge.
We first consider edges sharing an endpoint, then pairwise disjoint edges.

\begin{claim}\label{clm:active-star}
Suppose that $R$ non-tree edges join $x_0$ to vertices of a tree path
$x_0\cdots x_N$. For sufficiently large $R$, these edges give
$\exp(\Omega(\sqrt R))$ spanning-tree types with two ordered marks.
\end{claim}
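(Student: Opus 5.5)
The plan is to obtain many trees by rerouting the path through $x_0$, and to certify that they are pairwise non-isomorphic using the component sizes around the marked vertex. Orient the path so that $x_0$ is the deepest vertex and $x_N$ is its ancestor nearest the root. Let $0<p_1<\dots<p_R\le N$ be the indices with $x_0x_{p_i}\in E(G)\setminus E(T)$; we may assume these are non-tree edges of the DFS tree $T$. For a subset $J\subseteq\{1,\dots,R\}$, form $T_J$ as follows: for each $i\in J$, add the edge $x_0x_{p_i}$ and delete the tree edge $x_{p_i-1}x_{p_i}$.

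\emph{Step 1: $T_J$ is a spanning tree.} Each deleted edge is the parent edge of $x_{p_i-1}$, which then takes $x_0$ as its new parent. The only ancestor chain that changes is along the path, and in $T_J$ the vertex $x_0$ remains attached above through the segment beyond the largest chosen index. So the parent-pointer argument stated at the start of the proof shows that $T_J$ is acyclic and spanning.

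\emph{Step 2: read the gap data from $T_J$.} Put the two ordered marks on $x_0$ and on the root of $T$. Deleting $x_0$ from $T_J$ leaves the following components:
\begin{itemize}
\item the unchanged subtrees of $T$ hanging below $x_0$;
\item one component for each maximal run of path vertices strictly between consecutive chosen cut points, each together with the $T$-subtrees hanging off those vertices;
\item the top component, which is the only one containing the second mark.
\end{itemize}
For $1\le i<R$, let $u_i\ge1$ be the number of vertices in the basic interval $x_{p_i},\dots,x_{p_{i+1}-1}$, counting the hanging subtrees. Then the multiset of orders of the non-root components is determined by the isomorphism type of the doubly marked tree. This multiset consists of the unchanged contribution from below $x_0$, together with the block sums $u_{i}+\dots+u_{i'}$ over the maximal runs between consecutive elements of $J$. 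In particular, distinct multisets of block sums give distinct doubly marked types.

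\emph{Step 3: counting distinct multisets (main obstacle).} It remains to show that, for arbitrary positive weights $u_1,\dots,u_{R-1}$, the cut sets $J$ produce $\exp(\Omega(\sqrt R))$ distinct multisets of block sums. This is the hard part, since the weights may collide and different $J$ can give the same multiset. My first attempt would be to set $s=\lfloor\sqrt R/2\rfloor$ and cut the sequence into consecutive windows $W_1,\dots,W_s$, where $W_j$ contains about $j$ basic intervals. Inside each window I use a flexible sub-block whose total weight is made large compared with everything already placed, for instance by letting windows absorb further basic intervals until their weights exceed the sum of all earlier ones. A window that is either cut or merged then changes the largest block sum in an identifiable way. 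Reading the multiset from the largest element downward, peeling one window at a time, recovers the cut/merge decision of every window, which gives at least $2^{s}=\exp(\Omega(\sqrt R))$ types.

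If the geometric-growth requirement exhausts the $R$ cut points too quickly, so that only $O(\log R)$ windows fit, I would switch to a partition-counting argument. Among the $R$ cut points, choose families of blocks whose numbers of basic intervals form distinct partitions of a common integer $k\approx R$. I would then compare the block-count multiset with the size multiset via the monotonicity of $u_i\ge1$: any multiset of $k$ consecutive block sums has at most $\exp(O(\sqrt R))$ preimages among such partitions. Since the number of partitions of $R$ is $\exp(\Theta(\sqrt R))$, a suitable subfamily of exponential size then survives with distinct multisets. Either route gives the claimed $\exp(\Omega(\sqrt R))$ doubly marked types for large $R$. Forgetting the two marks later costs at most a factor $n^2$, which is the source of the $n^{-2}$ in the statement of Lemma~\ref{lem:active}.
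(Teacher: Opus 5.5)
Your Steps 1--2 are the paper's reduction: reroute the path through $x_0$, mark $x_0$ and the far end, and read off the multiset of orders of the variable branches at $x_0$. The paper temporarily roots $T$ at $x_0$, so each switch is literally a parent change to a strict ancestor; this removes your orientation assumption. It also fixes your Step~1 wording: it is $x_{p_i}$, seen from $x_0$, whose parent becomes $x_0$. Note too that the branch through $x_1$ is itself variable.

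The genuine gap is Step~3, which is the whole content of the claim, and you do not prove it.

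\emph{First route.} It fails in the case you flag yourself. If all $u_i$ are equal, windows whose weight must exceed everything before them grow geometrically. Only $O(\log R)$ of them fit, so you get only polynomially many types in $R$.

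\emph{Second route.} It rests on the unproved assertion that each multiset has at most $\exp(O(\sqrt R))$ preimages. The natural bound is the number of orderings of the multiset, which is up to $\ell!$ for $\ell$ blocks, i.e.\ $\exp(\Omega(\sqrt R\log R))$ once $\ell\ge\sqrt R$. Even granting your assertion, dividing $\exp(\Theta(\sqrt R))$ partitions by $\exp(O(\sqrt R))$ preimages gives nothing unless the constants are compared. Taking one cut set per partition also throws away almost all available configurations.

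The missing idea is to fix the number of cuts rather than range over all $J$. Choose exactly $k=\lfloor\sqrt R/4\rfloor$ of the $R$ edges. The marked type determines the multiset of the $k$ unmarked variable branch orders: remove the branch containing the second mark and cancel the fixed branches. Any ordering of this multiset then determines the cut positions uniquely, because the weights are positive, so partial sums along the path strictly increase. Hence each multiset comes from at most $k!$ of the $\binom{R}{k}$ cut sets, giving
\[
  \frac{\binom{R}{k}}{k!}\ge\Bigl(\frac{R}{k^2}\Bigr)^k\ge16^k=\exp(\Omega(\sqrt R)).
\]
The point is that there are $\binom{R}{k}=\exp(\Theta(\sqrt R\log R))$ cut sets with $k\sim\sqrt R$ cuts, enough to absorb the $k!$ ordering loss with no assumption on the arithmetic of the weights. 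A count of partitions, only $\exp(\Theta(\sqrt R))$, leaves no room for a loss of that size.
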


\begin{poc}
Temporarily root $T$ at $x_0$, choose $k:=\lfloor\sqrt R/4\rfloor$ of these
edges, and replace the parent edge of each chosen endpoint by its edge to
$x_0$. Mark $x_0$ and $x_N$.

Delete the original path edges and assign to $x_i$ the positive weight equal to
the order of its resulting component. The switches split the forward path into
$k$ branches at $x_0$ not containing $x_N$, and one branch containing $x_N$.
All other branches at $x_0$ are fixed. A marked tree determines the multiset
of orders of the $k$ variable unmarked branches, by removing the branch
containing the second mark and cancelling the fixed branch orders. Any ordering
of these $k$ orders determines the cut positions uniquely, since all weights are
positive. Thus each marked type comes from at most $k!$ choices, giving
\begin{align*}
  \frac{\binom Rk}{k!}
  &\ge\left(\frac{R}{k^2}\right)^k
   \ge16^k=\exp(\Omega(\sqrt R)).
  \tag*{\qedhere}
\end{align*}
\end{poc}

Thus many edges at one vertex give the required count. If no vertex
meets many edges, we will use a large matching instead. The next claim
shows that degrees along a marked path distinguish the corresponding
switches.

\begin{claim}\label{clm:active-matching}
Suppose that $q$ pairwise disjoint non-tree edges each meet a root-to-leaf path
$P$ of $T$. For sufficiently large $q$, these edges give
$\exp(\Omega(\sqrt q))$ spanning-tree types with two ordered marks.
\end{claim}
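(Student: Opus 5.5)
The plan follows Claim~\ref{clm:active-star}. We switch a well-chosen family of the given edges, mark the two ends of $P$, and show that a doubly marked tree type determines the switched family up to a small multiplicity. Write $P=p_0p_1\cdots p_\ell$ from the root $p_0$ to the leaf $p_\ell$. Since $T$ is a DFS tree, every non-tree edge $e_i$ joins an ancestor $x_i$ to a descendant $y_i$. If $y_i\in P$ then $x_i\in P$ as well, so each edge is of one of two kinds.

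\emph{Off-path edges} have $x_i\in P$ and $y_i\notin P$. \emph{Chords} have both ends on $P$. One kind accounts for at least $q/2$ of the edges, and we treat the two cases separately. As in Lemma~\ref{lem:active}, replacing the parent edge of each chosen $y_i$ by $x_iy_i$ gives a spanning tree, because parents remain strict ancestors. We mark $p_0$ first and then the other end of the marked path. The invariant we read off is the degree sequence along the path between the two marks, read from the first mark. This is an isomorphism invariant of trees with two ordered marks.

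\emph{Off-path edges.} Here the two marks stay at $p_0$ and $p_\ell$, and the marked path remains $P$. Switching $e_i$ raises $\deg(x_i)$ by one. It lowers the degree of the old parent of $y_i$, and that parent lies on $P$ only if $y_i$ hangs directly from $P$. Because the edges are pairwise disjoint, the $x_i$ are distinct. To control the decrements, I would pass to a subfamily of size $\Omega(q)$ in which no $x_i$ is the old parent of any chosen $y_j$; a greedy selection, or a split by the parity of the depth of $x_i$, should achieve this. The change in the degree sequence is then a $\{0,\pm1\}$-vector that determines the set of switched $x_i$. Choosing $k=\lfloor\sqrt q/4\rfloor$ edges therefore gives at least
\[
 \binom{\lceil q/2\rceil}{k}\ge\exp(\Omega(\sqrt q))
\]
types. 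In fact this case seems to give $2^{\Omega(q)}$, which is more than needed.

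\emph{Chords.} Switching $e_i=p_ap_b$ with $a<b$ shortcuts $P$. The segment $p_{a+1}\cdots p_{b-1}$ becomes a pendant path at $p_a$, and the marked path loses $b-a-1$ vertices. To keep the switches from interfering, I would apply Dilworth's theorem to the intervals $[a_i,b_i]$. This yields either $\sqrt q$ pairwise disjoint intervals, or $\sqrt q$ intervals with a common point, which by disjointness of the endpoints are strictly nested. With disjoint intervals, a chosen set of $k$ of them is recovered from the marked path: its length and the positions of the degree jumps at the $p_{a_i}$ encode the removed segments. I would argue that the recovered information pins down the choice up to at most $k!$ orderings, exactly as the positive weights did in Claim~\ref{clm:active-star}. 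This gives $\binom{\sqrt q}{k}/k!\ge\exp(\Omega(q^{1/4}))$ for $k\asymp q^{1/8}$.

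That exponent is too weak, and this is the step I expect to be the main obstacle. To reach $\exp(\Omega(\sqrt q))$ I would instead keep $\Omega(q)$ intervals: either greedily extract a family with bounded overlap depth, or handle a nested chain separately. In a nested chain, switching only the innermost chosen chord keeps the other chords on the marked path. Alternatively, one can switch the chords in increasing order of $b_i$ and record the pendant-path lengths at the $p_{a_i}$. These lengths behave like the branch orders in Claim~\ref{clm:active-star}, so the same $\binom Rk/k!$ count with $R=\Omega(q)$ and $k=\lfloor\sqrt R/4\rfloor$ would finish the claim.
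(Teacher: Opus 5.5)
\textbf{Verdict: genuine gap in the chord case.} Your off-path case is sound. A greedy choice works because each $y_j$ has a unique old parent, so there are at most as many conflicts as edges; the parity split is not justified, but you do not need it. That case even gives $2^{\Omega(q)}$.

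\textbf{Where the chord case fails.} First, pairwise intersecting intervals with distinct endpoints need not be nested: $[1,5]$ and $[3,7]$ cross. Second, none of your remedies handles a large family of pairwise intersecting chords.
\begin{itemize}
\item If all $q$ intervals contain one point, no subfamily of size $\Omega(q)$ has bounded overlap depth.
\item Switching only the innermost chosen chord determines each tree by a single chord, so it gives at most $q$ trees.
\item The analogy with branch orders in Claim~\ref{clm:active-star} comes with no recovery argument. One would be hard to supply: switching several nested or crossing chords together reshapes the others' pendant segments, and the marked path sees only the outermost switches. So the pendant lengths are not independent positive weights at one vertex.
\end{itemize}
As written, the chord case is proved only for disjoint intervals, and only with the insufficient bound $\exp(\Omega(q^{1/4}))$.

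\textbf{The missing observation.} The degree sequence read from the first mark separates all subsets, with no $k!$ loss, provided the marks are chosen correctly.

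\emph{Disjoint intervals.} Compare two subsets at the leftmost interval $[a,b]$ where they differ. The earlier switches agree, so $p_a$ sits at the same distance from the first mark in both trees. Its degree differs by exactly one. A chord is not a tree edge, so $b\ge a+2$, and the only decrease a switch causes, at $p_{b-1}$, stays inside its own interval. Hence all $2^{\sqrt q}$ subsets give distinct marked types.

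\emph{Common point $t$.} Discard the chord at $p_t$, so that $a_i<t<b_i$ for the rest, and mark $p_0$ and $p_t$ rather than $p_\ell$. Three facts hold:
\begin{itemize}
\item no switch deletes an edge of the prefix $p_0\cdots p_t$, since the deleted edges are $p_{b_i-1}p_{b_i}$ with $b_i-1\ge t$;
\item the upper endpoints are distinct and lie strictly before $p_t$;
\item every degree decrease occurs at or after $p_t$.
\end{itemize}
So the degrees along this prefix record the switched subset, again giving $2^{\Omega(\sqrt q)}$.

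This is the paper's argument. It takes $b$ to be the last path vertex on the root-to-$y$ path, so off-path edges are handled by the same interval bookkeeping and no split into two kinds is needed.
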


\begin{poc}
With the original DFS root, write $P=x_0\cdots x_N$. Orient each edge as
$x_a y$, where $x_a\in P$ is a strict ancestor of $y$, and let $x_b$ be the
last vertex of $P$ on the root-to-$y$ path. Associate the interval $[a,b]$ with
this edge. Replacing the parent of $y$ by $x_a$ increases the degree of $x_a$
by one. All degree changes on $P$ lie in $[a,b]$: the only possible decrease is
at $x_{b-1}$ if $y=x_b$, or at $x_b$ if $y\notin P$. In the latter case, $P$
itself is unchanged. The case $a=b$ is allowed; the vertex $x_a$ still gains one in
degree because $x_a y$ is not a tree edge.

There are either at least $\sqrt q$ pairwise disjoint intervals or
more than $\sqrt q$ intervals containing a common point. Indeed, repeatedly choose
an interval with smallest right endpoint and delete all intervals containing
that endpoint. The chosen intervals are disjoint, and every original interval
contains one of their right endpoints. If fewer than $\sqrt q$
intervals were chosen, one of these endpoints belongs to more than
$\sqrt q$ original intervals.

In the disjoint case, switch an arbitrary subset of the corresponding edges and
mark $x_0,x_N$. For two different subsets, consider their first differing
interval along $P$. Its upper endpoint remains at the same distance from the first mark
along the marked path in both trees, since all earlier switches agree. Its
degree differs by one, since all other switches change degrees on $P$ only
within their own disjoint intervals. Thus the marked trees are non-isomorphic,
giving $2^{\Omega(\sqrt q)}$ types.

In the common-point case, let $t$ be an integer common point and discard the
matching edge incident with $x_t$, if any. All remaining upper endpoints lie strictly
before $x_t$, and every possible degree decrease on $P$ occurs at or after
$x_t$. No edge of the prefix $x_0\cdots x_t$ is deleted. Marking its endpoints,
the degrees at these distinct endpoints record the switches independently, again
giving $2^{\Omega(\sqrt q)}$ types.
\end{poc}

We now find one of the two configurations above. Choose one incident
non-tree edge for each of the $M$ active vertices of the path, and let
$F$ be the graph formed by these edges. Each edge is chosen at most twice, so
$m:=|E(F)|\ge M/2$.

If some vertex $v$ has $d_F(v)\ge\sqrt m$, then $\Omega(\sqrt M)$ of its
neighbours lie on one tree path starting at $v$. Indeed, if $v$ is outside the
original path, all its $F$-neighbours are ancestors on that path. If $v$ is on
it, at most one $F$-neighbour lies outside it, and at least half of the
remaining neighbours lie on the same side of $v$.
Claim~\ref{clm:active-star} gives $\exp(\Omega(M^{1/4}))$ marked types.
Otherwise $\Delta(F)<\sqrt m$, and a greedy matching has size at least
$\sqrt m/2=\Omega(\sqrt M)$. Claim~\ref{clm:active-matching} gives the
same bound.

Forgetting the marks loses at most a factor of $n^2$. Reducing $c$ covers
bounded positive $M$, using $\tauiso(G)\ge1$ and $n\ge3$. The case $M=0$ is
immediate.
\end{proof}

This handles active vertices. We next use the branches outside the
path. Each branch either gives two different rooted types on its own
vertices or can be attached higher up the path. Both changes can be
distinguished by marking the endpoints of the path.

\begin{lemma}\label{lem:branching}
Let $T$ be a DFS spanning tree of a connected $n$-vertex graph $G$ with
$\delta(G)\ge3$. If a root-to-leaf path $P$ of $T$ has $B$ vertices with
a child outside $P$, then
\[
 \tauiso(G)\ge n^{-2}2^B.
\]
\end{lemma}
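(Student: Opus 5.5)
We will produce $2^B$ pairwise distinguishable spanning trees carrying two ordered marks, the endpoints $x_0$ (the root) and $x_N$ (the leaf) of $P=x_0\cdots x_N$. Forgetting the two marks loses at most a factor $n^2$, which gives $\tauiso(G)\ge n^{-2}2^B$. Let $x_{i_1},\dots,x_{i_B}$ be the vertices of $P$ with a child outside $P$. For each $j$, fix one such child $y_j$ and let $D_j$ be the descendant subtree of $T$ rooted at $y_j$. These subtrees are vertex-disjoint and hang off distinct path vertices. The plan is to give each $D_j$ a binary switch, independently for each $j$, that modifies only $D_j$ and its attachment. The marked tree should then let us read off every switch.

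\emph{The two alternatives for one branch.} Consider a leaf $z$ of $T$ inside $D_j$. By the DFS property, every neighbour of $z$ is an ancestor of $z$, so $\deg_G(z)\ge3$ gives it at least two non-tree ancestor neighbours. Case (a): some non-tree edge from $D_j$ reaches a strict ancestor of $x_{i_j}$, that is, a vertex $x_a$ with $a<i_j$. Then the alternative tree reattaches the corresponding vertex to $x_a$; this is a parent change to a strict ancestor, so the result is still a spanning tree. Case (b): every non-tree edge leaving $D_j$ lands on $x_{i_j}$ or inside $D_j$. Then $D_j\cup\{x_{i_j}\}$, rooted at $x_{i_j}$, contains a cycle: the leaf $z$ has two non-tree ancestor edges, both going to vertices of $D_j\cup\{x_{i_j}\}$. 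Corollary~\ref{cor:cyclic} then gives a second rooted spanning-tree type of $D_j\cup\{x_{i_j}\}$ at $x_{i_j}$. We realise it by parent changes inside $D_j$ toward ancestors, as in the proof of Lemma~\ref{lem:surplus}, so the rest of the tree is untouched.

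\emph{Decoding.} We keep the path $P$ fixed in every tree. In case (a), the switched vertex lies in $D_j$, not on $P$, so no path edge is deleted. A marked tree then determines $P$ as the unique path between the marks. We read the switches in order from the leaf end, or from the root end, using a potential that separates the two alternatives locally. In case (a), the total order of the subtrees hanging at $x_{i_j}$ drops while the order at $x_a$ rises, so the sequence of hanging-subtree orders along $P$ changes at position $i_j$. In case (b), the rooted type hanging at $x_{i_j}$ changes. To make this independent across $j$, we compare two switch vectors at the last index where they differ, scanning from $x_N$ upward. Switches with larger index cannot alter the hanging forest at $x_{i_j}$: in case (a) they only add material at strictly smaller positions $a<i_{j'}$, which might still lie below $i_j$. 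So the safer ordering is the first differing index scanned from $x_0$, combined with the total number of vertices in the hanging forests of $x_i,\dots,x_N$.

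\emph{Main obstacle.} The main difficulty is making the decoding truly independent when case-(a) reattachments land on other switch positions. A single potential should do it: the sequence $s_i=\sum_{k\ge i}(\text{order of the hanging forest at }x_k)$, which for each $i$ counts the vertices of the subtree of $x_i$ in the marked tree. Switch $j$ in case (a) decreases $s_i$ exactly for $a<i\le i_j$, and case (b) leaves every $s_i$ unchanged. So the case-(a) switches are determined by the differences $s_{i}-s_{i+1}$ if we choose, for each $j$, the target $a$ as the closest usable ancestor, or else we resolve overlaps by a triangular argument taken from the leaf end. After that, the case-(b) switches are read off from the rooted types of the hanging forests at their own $x_{i_j}$.
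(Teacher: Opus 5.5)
Your proposal follows the paper's proof: two ordered marks at the ends of $P$, one binary switch per off-path branch (reattach to a strict ancestor of $x_{i_j}$, or change the internal rooted type using a cycle and Corollary~\ref{cor:cyclic}), a triangular decoding of the reattachments along $P$, and then cancellation of the known branches at each path vertex to read off the internal changes. The paper instead splits by whether $G[U_i]$ is acyclic and moves all of $U_i$, so it can decode with the degree vector; you move only $T_w$ and re-span $D_j\cup\{x_{i_j}\}$, so you need hanging-forest orders, which case (b) leaves unchanged, and you correctly use them.

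Your decoding paragraph needs cleaning up, though. The root-end scan you call safer fails: a deeper switch $j'$ with $a_{j'}<i_j$ also changes $s_{i_j}$ and can mask switch $j$. The leaf-end scan you finally invoke is correct. At the deepest index $i_j$ where the two case-(a) patterns differ, all deeper case-(a) switches agree, so whatever they reattach at $x_{i_j}$ is the same in both trees. Case-(b) switches change no orders, and shallower switches only affect positions strictly above $x_{i_j}$.
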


\begin{proof}
For each such vertex $x_i$, choose a child $c_i\notin V(P)$ and let
$U_i=V(T_{c_i})$. These descendant sets are pairwise disjoint. Every
neighbour outside $U_i$ of a vertex in $U_i$ lies on the initial segment
of $P$ ending at $x_i$, by the DFS property.

If $G[U_i]$ contains a cycle, it has two distinct rooted spanning-tree
types with root $c_i$, by Corollary~\ref{cor:cyclic}. Choose either
type and retain $x_ic_i$.

If $G[U_i]$ has no cycle, it is the original tree on $U_i$.
Choose a leaf $v_i\ne c_i$ when $|U_i|\ge2$, and choose its only vertex
otherwise. The vertex $v_i$ has at least two neighbours outside $U_i$,
so one, say $a_i$, lies strictly above $x_i$ on $P$. Either retain
$x_ic_i$, or replace it by $a_iv_i$, keeping the tree on $U_i$ fixed.
This case cannot occur at the DFS root, since there would then be
only one possible outside neighbour.

Every choice gives a spanning tree: each $U_i$ induces a tree attached
exactly once to the unchanged tree on its complement. Mark the two
endpoints of $P$ in order. Moving a branch whose induced graph has no cycle changes the degree
vector along the marked path by $e_{a_i}-e_{x_i}$. These vectors are
linearly independent. Indeed, the deepest vertex $x_i$ with nonzero
coefficient in a linear relation receives no contribution from a
vertex $x_j$ above it, and the coefficients of all vertices $x_j$ below it vanish.
Its coordinate is therefore the negative of its own nonzero
coefficient. Changing only the tree inside a branch does not change degrees on
$P$. Thus the degree vector determines exactly which branches were moved.

After these choices are recovered, cancel their branches, and all
other fixed branches, from the multiset of rooted branches at each
vertex of the marked path. For each $U_i$ such that $G[U_i]$ contains a cycle, the remaining
branch at $x_i$ determines its chosen rooted type. Thus the $2^B$ choices give
distinct types with two ordered marks. Forgetting the marks loses at
most a factor $n^2$.
\end{proof}

The two bounds now cover every vertex of a DFS path. If a vertex has
no off-path child, then its tree degree is at most two, so minimum
degree three forces a non-tree edge. The two counts can therefore be combined directly.

\begin{proof}[Proof of Proposition~\ref{prop:path}]
On a root-to-leaf DFS path of order $H$, let $B$ count the vertices
with an off-path child, and let $M$ count the active vertices. Every
vertex not counted by $B$ has tree degree at most two and hence is
active, so $H\le B+M$. Lemmas~\ref{lem:branching} and~\ref{lem:active}
give respectively $n^{-2}2^B$ and
$n^{-2}\exp(c_0M^{1/4})$ as lower bounds. Since
$\max(B,M)\ge H/2$, these imply the stated bound with $H$ in place
of $L(G)$. Any given simple path can be followed as the initial
descent of a DFS by always exploring its next vertex first. It lies
on a root-to-leaf DFS path, so we may apply the bound to a longest
simple path. Taking logarithms gives the stated bound on $L(G)$.
\end{proof}

\section{Stability}\label{sec:stability}

In this section we prove Theorem~\ref{thm:stability}. The proof rests on two
propositions, which we state first. The first controls a single
common-neighbourhood class. It is used in both directions: a large class has
only a bounded remainder, while a class below $n/2$ must in fact be
sublinear.

\begin{proposition}\label{prop:amplify}
For $d\ge3$ there is $b_d>0$ such that the following holds.
Let $G$ be connected of order $n$, with $\delta(G)\ge d$, and let
 $X=X(S)$, $|S|=d$, $m=|X|\ge1$ and $r=n-d-m$.
Then
\begin{equation}\label{eq:amplification}
 \tauiso(G)\ge b_d\frac{m^d(r+1)}{n}.
\end{equation}
In particular, fix $C>0$ and assume that $\tauiso(G)\le Cn^{d-1}$.
For sufficiently large $n$, a class with $m\ge n/2$ has
$r=O_{d,C}(1)$, while a class with $m<n/2$ satisfies
 $$m\le K_{d,C}n^{1-1/d}$$
for a constant $K_{d,C}>0$.
\end{proposition}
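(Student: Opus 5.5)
The plan is to write down an explicit family of spanning trees built around the hubs $S$ and to show that their types are distinct enough. Two degrees of freedom are combined. The $m$ vertices of $X$ will be distributed as leaves among the hubs, which should give $\Omega_d(m^{d-1})$ leaf-count vectors. The remainder will be rewired to give an additional factor of order $\min(m,r+1)$, which is comparable to $m(r+1)/n$ since $n=d+m+r$.

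\textbf{The skeleton.} First I describe the remainder $R=V\setminus(S\cup X)$. Every neighbour of a vertex of $X$ lies in $S$, so each component $C$ of $G[R]$ has all its outside neighbours in $S$. A singleton component would be a vertex of degree at least $d$ whose neighbourhood lies inside $S$. It would then have neighbourhood exactly $S$ and so lie in $X$, which is impossible. Hence every component has $|C|\ge2$, there are $t\le r/2$ components, and each meets $S$ because $G$ is connected.

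I fix one vertex $x_0\in X$ joined to all $d$ hubs; this connects $S$. Every other vertex of $X$ becomes a leaf at a hub of our choice. Each component $C$ receives an $S$-rooted spanning forest.

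\textbf{Counting remainder profiles.} For a component $C$, let $A_C$ be its set of branch profiles. I want $|A_C|\ge c|C|$. The plan is to contract the boundary roots to a single root and apply Lemma~\ref{lem:surplus}. The degree condition gives $e(C)+e(C,S)\ge d|C|/2\ge 3|C|/2$, so this yields $\Omega(|C|)$ rooted types, each giving a distinct single-branch profile. I then map all profile vectors to $\mathbb R$ by a generic linear functional, exactly as in the proof of Lemma~\ref{lem:affine}. The one-dimensional sumset bound $|A_1+\dots+A_t|\ge\sum(|A_i|-1)+1$ then gives $\Omega(r)+1$ distinct remainder sums.

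\textbf{Combining the two families.} The coordinates recording rooted branches inside $R$ are disjoint from the coordinates recording leaves at hubs. Therefore the total profile sum ranges over a product set of size $\Omega_d(m^{d-1})\cdot\Omega(r+1)$.

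\textbf{Recovery from the unlabelled tree.} Here I restrict the leaf counts so that every hub receives at least $m/(2d)$ leaves. If $m\ge K(r+d)$ for a suitable $K$, the hubs are then the only vertices with more than $r+d$ leaf neighbours. With the hubs known, $x_0$ is identified as the common neighbour of all of them. The profile sum is then determined up to a permutation of the $d$ hubs, which costs a factor $d!$. This gives $\tauiso(G)\ge b_d m^{d-1}(r+1)\ge b_d m^d(r+1)/n$.

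\textbf{Main obstacle.} The hard case is $m<K(r+d)$, where $n=O(r+1)$ and the target is about $m^d$. Here I need $d$ free parameters rather than $d-1$, so the remainder must contribute a factor $m$ and not merely $r$. The first thing I would try is to use the $X$-vertices as connectors between a remainder piece and different hubs, combined with Lemma~\ref{lem:multiroot}. Each component has a profile set of affine dimension at least $d$. Attaching $\Theta(m)$ suitably chosen pieces and applying Lemma~\ref{lem:affine} with parameter $d$ gives order $m^d$ sums, and I would pay a factor $n^2$ for marking the hubs if needed.

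\textbf{The ``in particular'' part.} This is routine. With $m\ge n/2$, inequality~\eqref{eq:amplification} gives $Cn^{d-1}\ge b_d2^{-d}n^{d-1}(r+1)$, so $r=O_{d,C}(1)$. With $m<n/2$ we have $r+1>n/2-d$, so $m^d\le O_{d,C}(n^{d-1})$, which gives $m\le K_{d,C}n^{1-1/d}$.
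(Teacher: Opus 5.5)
\textbf{The regime $m<K(r+d)$ is a genuine gap.} Your treatment of $m\ge K(r+d)$ is essentially workable, but you leave the other regime open, and the plan you sketch for it would fail, for three reasons.
\begin{itemize}
\item Vertices of $X$ have no neighbours in $R$, so they cannot connect remainder pieces to hubs.
\item The remainder may be a single component. Then Lemma~\ref{lem:affine} over remainder pieces gives only $d+1$ sums, not order $m^d$.
\item Paying $n^2$ for marks is fatal. Here $n\le(K+1)(r+d)$, so the target $b_dm^d(r+1)/n$ is itself of order $m^d$.
\end{itemize}
Your diagnosis is also off: no extra free parameter is needed. In this regime $r+1\gtrsim m/K$, so the $\Omega_d(m^{d-1})\cdot(r/2+1)$ configurations you already build suffice, provided the marks can be forgotten at cost $O_d(n/m)$. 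The real difficulty is that the hubs can no longer be recognized by their leaf counts, because remainder vertices may have as many leaf neighbours as the hubs. Choosing the centre $x_0$ naively costs a factor $n$, which leaves only $m^{d-1}(r+1)/n$, short of the target by a factor $m$.

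\textbf{The missing idea.} This is how the paper proves Lemma~\ref{lem:pool}. Mark $x_0$ and order its $d$ neighbours; this recovers the leaf vector and the remainder type exactly. Then show that each unlabelled tree in the family admits only $O_d(n/m)$ such markings. Let $Q$ be the set of vertices of degree at least $m/(2d)$, so $|Q|\le 4dn/m$. Every admissible centre has degree $d$, lies outside $Q$, and has all its neighbours in $Q$. The edges between the set $B$ of admissible centres and $Q$ form a forest, so $d|B|\le|B|+|Q|-1$, i.e.\ $|B|=O_d(n/m)$. This loss of $O_d(n/m)$ converts $m^{d-1}(r+1)$ into $m^d(r+1)/n$ in every regime, so no case split is needed.

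\textbf{Two smaller repairs in your first regime.}
\begin{itemize}
\item The remainder coordinates are not disjoint from the leaf coordinates. A remainder vertex attached alone to a hub is a one-vertex branch indistinguishable from an $X$-leaf. To fix this, fix one forest per rooted type of the contracted remainder. The number of one-vertex remainder branches is determined by the larger branches, and the hub degrees then determine the leaf vector.
\item The bound $e(C)+e(C,S)\ge d|C|/2$ counts edges to $S$ with multiplicity, which contraction removes. Use instead that every nonroot vertex of the contracted graph has degree at least two, and every degree-two one is adjacent to the root. Applying Lemma~\ref{lem:surplus} to the whole contracted remainder then gives $q(J,\rho)\ge r/2+1$ directly, with no sumset argument.
\end{itemize}
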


The second proposition says that a large class must exist whenever the
number of spanning-tree types is only of order $n^{d-1}$.

\begin{proposition}\label{prop:large-class}
Fix $d\ge3$ and $C>0$. For all sufficiently large $n$, every connected
$n$-vertex graph $G$ with $\delta(G)\ge d$ and
$\tauiso(G)\le Cn^{d-1}$ has a set $S$ of $d$ vertices with
$$|X(S)|\ge n/2.$$
\end{proposition}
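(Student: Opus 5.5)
We argue by contradiction and follow the roadmap of Section~\ref{sec:overview}. Suppose every class $X(S)$ has size less than $n/2$. By Proposition~\ref{prop:amplify}, every class then has size at most $K_{d,C}n^{1-1/d}$. By Proposition~\ref{prop:path}, every simple path has order $O_{d,C}((\log n)^4)$, so every DFS tree $T$ of $G$ has height $H=O((\log n)^4)$. Our aim is to show $\tauiso(G)\ge n^d/(\log n)^{O_d(1)}$, which contradicts $\tauiso(G)\le Cn^{d-1}$ for large $n$.

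\textbf{Step 1: many equal-size light subtrees.} Fix a DFS tree $T$. Each leaf $v$ of $T$ has all its neighbours among its ancestors, and there are at most $H$ of them. Since $\deg v\ge d$, the set $N_G(v)$ is a subset of size at least $d$ of the ancestor chain of $v$.

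We first bound the number of leaves whose neighbourhood is exactly a given $d$-set. Leaves with $N_G(v)=S$ lie in the single class $X(S)$, so any one $d$-set accounts for at most $K_{d,C}n^{1-1/d}$ leaves.

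Next we show that pieces with at least two vertices must account for most of the graph. If singleton children (leaves) made up almost all vertices, then, because ancestor chains have polylogarithmic length, the leaves would concentrate on few parents. A parent with many leaf children whose neighbourhoods are subsets of a fixed chain of length $H$ yields, by pigeonhole over the $\binom{H}{d}$ possible $d$-sets, a large class. We refine this to conclude that $\Omega(n/\mathrm{polylog})$ vertices lie in descendant subtrees of order at least $2$.

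We then take the heavy-path decomposition of $T$. Every root-to-leaf path meets $O(\log n)$ light edges, and the height is polylogarithmic. Pigeonholing over depth and order, we extract $N=n/(\log n)^{O_d(1)}$ pairwise disjoint light subtrees $U_1,\dots,U_N$, all of the same order $b\ge2$ (with $b$ polylogarithmic), all hanging at the same depth and, if needed, with ancestor sets of the same shape.

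\textbf{Step 2: local freedom and summation.} By the DFS property, every outside neighbour of $U_i$ lies on the ancestor chain $S_i$ of $U_i$. We label each root in $S_i$ by its depth, so the labels are comparable across different $i$. Lemma~\ref{lem:multiroot} shows that the $S_i$-rooted forests on $U_i$ have a profile set $A_i$ of affine dimension at least $d$.

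Each $U_i$ is re-embedded independently while the rest of $T$, including the heavy siblings, stays fixed; the result is still a spanning tree. The multiset of branches attached at each depth along the chains is then the sum of the chosen profiles plus fixed contributions. By Lemma~\ref{lem:affine}, there are at least $\binom{N+d}{d}=\Omega(N^d)$ distinct sums.

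\textbf{Step 3: recovery of the sum, and the main obstacle.} The main obstacle is showing that distinct sums give non-isomorphic unlabelled trees. The sum lives on depths measured from a root that an isomorphism does not see.

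We handle this as follows. First, we mark a root, which costs a factor $n$; better, we choose it via a guarded vertex so that the loss is only polylogarithmic. Second, we arrange that the heavy children are strictly larger than any light piece of order $b$. Then, from the rooted tree, one can identify the fixed heavy skeleton: repeatedly follow the largest child, and remove the branches of order at most $b$ at each depth to read off the aggregated profile sum by depth. I expect the delicate points to be twofold: ensuring that re-embedded branches cannot be confused with fixed branches of the same order, which we handle by cancelling the fixed multiset as in Lemma~\ref{lem:branching}; and ensuring that forgetting the root loses only a $(\log n)^{O(1)}$ factor.

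Combining the steps gives $\tauiso(G)\ge N^d/(\log n)^{O(1)}=n^d/(\log n)^{O_d(1)}>Cn^{d-1}$, the desired contradiction.
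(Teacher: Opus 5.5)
Your roadmap is the paper's, but the two steps that carry the exponent are placeholders, and as sketched they fail.

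\textbf{Step~1: too few pieces.} Here you need $I\ge n/(\log n)^{O(1)}$ nonleaf vertices, since each piece of order $b\ge2$ contains one. The pigeonhole over $\binom{H}{d}$ sets does not give this.

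Leaves of degree greater than $d$ lie in no class $X(S)$, so Proposition~\ref{prop:amplify} says nothing about them. Pigeonholing their full neighbourhoods would range over about $2^H$ sets. The paper instead selects $d+1$ neighbours, including the parent, and applies Lemma~\ref{lem:pool} with $k=d+1$. This bounds each such group by $O(n^{d/(d+1)})$.

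More seriously, bounding every group by $n^{1-\varepsilon}$ only yields about $n^{\varepsilon}$ distinct parents. That gives $N^d$ far below $n^{d-1}$.

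The missing idea is the second claim in Lemma~\ref{lem:nonleaf}. Fix a tuple of selected-neighbour depths; then all parents of these leaves sit at one depth $j$. Suppose $t$ of these parents carry at least $R=2d(d+1)$ leaves. Moving chosen leaves to their shallowest selected neighbour changes the degree multiset at level $j$ by $\{b_w+y_w\}$ with $0\le y_w\le R$. Lemma~\ref{lem:affine} then gives $\binom{t+R}{R}\le n\tauiso(G)\le Cn^d$, so $t=O(n^{d/R})$. Hence the large groups hold only $O(n^{1-1/(2(d+1))})=o(n/H^{d+1})$ leaves. The remaining leaves lie in groups of size below $R$ at distinct parents, which gives $I\ge cn/H^{d+1}$.

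\textbf{Step~3: the root.} Marking a root costs a factor $n$ and yields only $N^d/n=n^{d-1}/(\log n)^{O(1)}$. That does not contradict $\tauiso(G)\le Cn^{d-1}$. So the ``guarded vertex'' is exactly what must be constructed, and you do not construct it.

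There is a second requirement. The new root must have each $S_i$ on a single path from it. Otherwise depth labels can merge distinct roots of $S_i$ and destroy the affine dimension supplied by Lemma~\ref{lem:multiroot}.

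The paper's Lemma~\ref{lem:guarded} handles both points with a dichotomy. Keep pieces at one common depth, take a centroid $z$ of $T$, and let $P$ be the path from the DFS root to $z$.
\begin{itemize}
\item If half the pieces have shallowest attachment vertex $a_i\notin V(P)$, each such piece together with all of $S_i$ lies in one component of $T-z$. Then $z$ remains a centroid of every modified tree, so forgetting the root loses only a factor $2$. Also $S_i$ lies on the $z$--$p_i$ path.
\item Otherwise some $a\in V(P)$ is the shallowest attachment vertex of $K\ge(M-1)/(2H)$ pieces. Reattaching a third of them at $a$ forces $\deg(a)\ge K/3$ in every resulting tree. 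Rooting at $a$ then loses only a factor $O(n/K)$, giving $\tauiso(G)\ge c_d(N/H^2)^{d+1}/n$.
\end{itemize}
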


Theorem~\ref{thm:stability} is now immediate from these two propositions.

\begin{proof}[Proof of Theorem~\ref{thm:stability}]
Fix $d\ge3$ and $C>0$, and let $n$ be sufficiently large.
Proposition~\ref{prop:large-class} gives a set $S$ of $d$ vertices with
$m:=|X(S)|\ge n/2$. Put $r=n-d-m$. Proposition~\ref{prop:amplify} yields
\[
  Cn^{d-1}\ge\tauiso(G)\ge b_d\frac{m^d(r+1)}{n}
  \ge \frac{b_d}{2^d}n^{d-1}(r+1).
\]
Thus $r=O_{d,C}(1)$, which proves the theorem.
\end{proof}

It remains to prove Propositions~\ref{prop:amplify} and~\ref{prop:large-class}.
We do this in the next two subsections.

\subsection{Proof of Proposition~\ref{prop:amplify}}

The proof combines two independent choices: a rooted tree on the remainder
and the numbers of leaves attached to the common neighbours. We first give
a slightly more general lemma, since the same estimate will later be used
for groups of DFS leaves.

\begin{lemma}\label{lem:pool}
For every fixed $k\ge3$ there is $a_k>0$ with the following property.
Let $G$ have order $n$, and let $X,S$ be disjoint sets with $|X|=m\ge1$
and $|S|=k$. Suppose $X$ is independent, every vertex of $X$ is adjacent
to every vertex of $S$, and every component of $G-X$ meets $S$.
Let $(J,\rho)$ be obtained from $G-X$ by identifying $S$ to a root and
deleting loops and repeated edges. Then
\[
 \tauiso(G)\ge a_k\frac{m^k}{n}\,q(J,\rho).
\]
\end{lemma}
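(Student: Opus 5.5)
The plan is to build, for each rooted spanning-tree type of $(J,\rho)$ and each admissible vector of leaf counts on $S$, a spanning tree of $G$. I then show that the resulting map to isomorphism types loses at most a factor of order $n/m$, not $n$. The target bound follows, since the leaf vectors contribute $\Theta_k(m^{k-1})$ choices and $m^{k-1}\cdot m/n=m^k/n$.

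\textbf{Construction.} Fix a rooted spanning tree $R$ of $(J,\rho)$, one for each rooted type. Lift it to an $S$-rooted spanning forest of $G-X$ by sending each root edge to one chosen edge into $S$. This is possible because every component of $G-X$ meets $S$, and edges inside $S$ are ignored. Fix one lift per type, and let $j_s$ be the number of single-vertex branches of the lift at $s\in S$.

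Next choose $x_0\in X$ and join it to all of $S$. This connects the forest into a tree, because $X$ is independent and complete to $S$. Attach the remaining $m-1$ vertices of $X$ as leaves, $\ell_s$ of them at $s$, where $\sum_s\ell_s=m-1$. I restrict to vectors with every $\ell_s\ge m/(2k)$ and all $\ell_s$ distinct. For $m$ large in terms of $k$ there are still at least $c_k m^{k-1}$ such vectors; bounded $m$ is absorbed into $a_k$, since $q(J,\rho)\le\tauiso(G)$ anyway.

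\textbf{Recovery.} Suppose the tree $T$ is given together with the set $S$ (unordered) and the vertex $x_0$. Contracting $S\cup\{x_0\}$ to a root and deleting all leaves at that root gives $R$ with its single-vertex root branches removed. The number of deleted leaves is $m-1+\sum_s j_s$, and $m$ is a fixed parameter, so $\sum_s j_s$ is recovered. Hence the rooted type of $R$ is recovered. With the lift of $R$ fixed, the degrees of the hubs give the multiset $\{\ell_s+j_s+\text{const}_s\}$. Since $S$ is known only up to permutation, each vector $(\ell_s)$ is determined up to at most $k!$ possibilities.

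So the pairs $(R,\ell)$ give at least $q(J,\rho)\,c_k m^{k-1}/k!$ spanning-tree types of $G$ with $S$ and $x_0$ specified. It remains to forget this extra data.

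\textbf{Forgetting $S$ and $x_0$.} This is the step I expect to be the main obstacle, because marking $x_0$ naively costs a factor $n$ and would give only $m^{k-1}q/n$.

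I would exploit the largeness of the hub degrees. Every hub has degree at least $m/(2k)$ in $T$. Since $T$ has fewer than $2n$ degree units in total, at most $4kn/m$ vertices have degree at least $m/(2k)$. Thus a hub can be marked at a cost of only $O_k(n/m)$.

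Having marked one hub $s_1$, I need to locate $x_0$ and the rest of $S$ with $O_k(1)$ ambiguity. My first attempt is to also fix, inside the construction, a rigid gadget that singles out $x_0$ among the neighbours of $s_1$. For instance, make $x_0$ the unique neighbour of $s_1$ of degree exactly $k$ all of whose neighbours have degree at least $m/(2k)$. If a vertex of $R$ could mimic this pattern, I would instead compare hub degrees against the at most $4kn/m$ high-degree vertices and absorb the bounded overcount into $a_k$.

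Granting this, the total loss is $O_k(n/m)$, which yields
\[
\tauiso(G)\ge a_k\,\frac{m^k}{n}\,q(J,\rho).
\]
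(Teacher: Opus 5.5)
Your construction and your recovery with $x_0$ marked match the paper's first claim, where $b_0$ is marked and its neighbours are ordered, and that part is fine. The genuine gap is the step you flag yourself. You never show that forgetting $x_0$ and $S$ costs only $O_k(n/m)$, and the mechanism you propose fails.

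After you mark a hub $s_1$, the vertex $x_0$ need not be the unique neighbour of $s_1$ of degree $k$ whose neighbours all have degree at least $m/(2k)$. The lemma puts no degree hypothesis on the remainder. So $R$ may contain $\Theta(n/m)$ pendant gadgets at $s_1$, each consisting of a vertex joined to $s_1$ and to $k-1$ centres of stars with $m/(2k)$ leaves. Each gadget is attached by a single edge, so it lies in every spanning tree of $G$, and each copy mimics your pattern. Your ambiguity after marking $s_1$ can therefore be $\Theta(n/m)$ rather than bounded. Marking a hub first then yields only about $q(J,\rho)m^{k+1}/n^2$, and ``absorb the bounded overcount into $a_k$'' has no argument behind it.

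The missing idea is to skip the hub and bound the possible images of $x_0$ directly.
\begin{itemize}
\item Let $Q$ be the set of vertices of degree at least $m/(2k)$, so $|Q|\le 4kn/m$.
\item For large $m$, any vertex that can play the role of $x_0$ has degree $k$, lies outside $Q$, and has all its neighbours in $Q$. Call the set of such vertices $B$.
\item The edges of the tree between $B$ and $Q$ form a forest. Each vertex of $B$ sends exactly $k$ edges into $Q$, so $k|B|\le |B|+|Q|-1$, giving $|B|\le (|Q|-1)/(k-1)=O_k(n/m)$.
\item Since $S=N_T(x_0)$, marking $x_0$ already determines $S$. The total loss is $O_k(n/m)$, or $O_k(n/m)\cdot k!$ if you order $S$.
\end{itemize}

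Separately, your treatment of bounded $m$ rests on $q(J,\rho)\le\tauiso(G)$, which you have not justified. The direct fix is to use one fixed leaf vector. Your recovery then makes the types with $x_0$ marked pairwise distinct, and dividing by the at most $n$ choices of $x_0$ gives $\tauiso(G)\ge q(J,\rho)/n$. This suffices when $m$ is bounded in terms of $k$.
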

\claimsof{lem:pool}
\begin{proof}
We separate the construction of marked trees from the loss caused by
forgetting the marks.

\begin{claim}
For all sufficiently large $m$, the construction below gives
$\Omega_k(q(J,\rho)m^{k-1})$ distinct spanning-tree types with a marked
center and an ordering of its $k$ neighbours.
\end{claim}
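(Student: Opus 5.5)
The plan is to build the marked trees explicitly from two independent choices: a rooted spanning-tree type of $(J,\rho)$, and a vector of leaf counts at the vertices of $S$. I will then show that the marked tree recovers both.

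\emph{Construction.} Write $S=\{s_1,\dots,s_k\}$ and fix one vertex $x_0\in X$ as the centre.
\begin{enumerate}[label=(\roman*)]
\item For each of the $q(J,\rho)$ rooted types of $(J,\rho)$, fix one spanning tree $T_J$ of $J$ realising it.
\item Lift $T_J$ to a spanning forest $F$ of $G-X$. Each edge of $T_J$ at $\rho$ comes from an edge of $G$ joining some $s_i$ to a vertex outside $S\cup X$; choose one such edge. This gives an $S$-rooted forest whose branches at the various $s_i$ are exactly the branches of $T_J$ at $\rho$. No edge inside $S$ is used. Since every component of $G-X$ meets $S$, the forest spans $V(G)\setminus X$.
\item Join $x_0$ to all of $s_1,\dots,s_k$. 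This connects the $k$ components of $F$ into a tree.
\item Distribute the remaining $m-1$ vertices of $X$ as leaves, with $\ell_i$ leaves at $s_i$, where $\ell_1+\dots+\ell_k=m-1$. Every vertex of $X$ is adjacent to all of $S$, so any such distribution is available.
\end{enumerate}
Mark $x_0$ and order its neighbours as $s_1,\dots,s_k$. The number of choices is $q(J,\rho)\binom{m+k-2}{k-1}=\Omega_k\bigl(q(J,\rho)m^{k-1}\bigr)$. It remains to check that distinct choices give non-isomorphic marked trees.

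\emph{Recovery.} Given the marked tree, delete $x_0$. The ordered neighbours $s_1,\dots,s_k$ are identified by the marks. Collect the rooted branches hanging at all the $s_i$ together; this multiset equals the multiset of branches of $T_J$ at $\rho$ together with $m-1$ extra single-vertex branches. Since $m$ is determined by $G$, we can remove $m-1$ single-vertex branches from this multiset. What remains is the branch multiset of $T_J$ at $\rho$, and for a single root this determines the rooted type of $T_J$. Hence different $J$-types give different marked trees.

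For a fixed $J$-type the lift $F$ was fixed in advance. So the number of single-vertex forest branches $f_i$ at each $s_i$ is a known constant. The number of single-vertex branches at $s_i$ in the marked tree is $\ell_i+f_i$, so it determines $\ell_i$. Distinct vectors $(\ell_1,\dots,\ell_k)$ therefore give distinct marked types. For sufficiently large $m$ this proves the claim. The requirement that $m$ be large serves only to make the binomial coefficient comparable to $m^{k-1}/(k-1)!$.

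\emph{Main obstacle.} The step I expect to need the most care is this recovery. Leaves coming from $X$ are indistinguishable from single-vertex branches of the lifted forest, so one must not attempt to read off the leaves of $X$ individually. The fix is to fix one lift per $J$-type and subtract the known constants $f_i$ and $m-1$. A second, lesser check is that the $s_i$ and the branch boundaries are genuinely identifiable. This holds because $x_0$ is marked, its neighbours are ordered by the marks, and $x_0$ has no other neighbours since $X$ is independent and $N(x_0)\subseteq S$. After the claim, the lemma itself follows by forgetting the marks. This loses a factor of at most $n\cdot k!$, giving $\Omega_k(q\,m^{k-1}/n)\ge a_k q\,m^k/n^2\cdot n/m$; since $m\le n$, this is at least $a_k q\,m^k/n$ after adjusting constants. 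Bounded values of $m$ are absorbed by shrinking $a_k$, using $\tauiso(G)\ge q(J,\rho)/n$ or a similar trivial bound.
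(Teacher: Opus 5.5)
Your proof of the claim itself is correct and is essentially the paper's. You fix one lift of a representative for each rooted $J$-type and delete the marked centre. You recover the $J$-type from the branches at the hubs after discounting the $m-1$ one-vertex branches from $X$; the paper equivalently subtracts the orders of the nontrivial branches from $r=n-k-m$. You then read the leaf vector off the ordered hub degrees.

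The only difference is that the paper uses just the leaf vectors giving every hub at least $m/(2k)$ leaves. This is where ``sufficiently large $m$'' is genuinely needed. It costs nothing in the count, since there are still $\Omega_k(m^{k-1})$ such vectors.

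The rest of Lemma~\ref{lem:pool} relies on that restriction, and your closing deduction fails without it. Forgetting the marks at a cost of $n\cdot k!$ leaves only $\Omega_k(q(J,\rho)m^{k-1}/n)$, which is smaller than $a_kq(J,\rho)m^k/n$ by a factor of $m$. The inequality you assert ``since $m\le n$'' would need $m=O(1)$; $m\le n$ only yields the weaker bound $q(J,\rho)m^k/n^2$.

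The paper gains the missing factor in its second claim:
\begin{enumerate}[label=(\roman*)]
\item Because every hub has at least $m/(2k)$ leaves, the set $Q$ of vertices of degree at least $m/(2k)$ satisfies $|Q|\le4kn/m$.
\item Every possible centre is a degree-$k$ vertex outside $Q$ with all its neighbours in $Q$.
\item The edges between these candidates and $Q$ form a forest, so there are at most $(|Q|-1)/(k-1)=O_k(n/m)$ candidates.
\end{enumerate}
In your unrestricted family some hubs can have bounded degree, so this bound on the number of centres is not available. You should restrict to the high-degree leaf vectors before forgetting the marks.
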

\begin{poc}
For each rooted spanning-tree type of $J$, choose one representative.
Replace each edge at its root by a corresponding edge of $G-X$ to a
vertex of $S$. This gives an $S$-rooted spanning forest on $G-X$; keep
one such forest for each chosen type. Reserve $b_0\in X$, join it to every
vertex of $S$, and attach the other vertices of $X$ as leaves at $S$.
We call the vertices of $S$ hubs. For large $m$, there are
$\Omega_k(m^{k-1})$ leaf vectors giving every hub at least $m/(2k)$ leaves.

Mark $b_0$ and order its neighbours. After deleting $b_0$, every branch
at a hub with at least two vertices belongs to the chosen forest. Put
$r=n-k-m$. Subtracting the total order of these branches from $r$ gives
the number of one-vertex remainder branches. Identifying the hubs now
recovers the rooted $J$-type. The forest chosen for that type was fixed,
so the ordered hub degrees determine the leaf vector. Thus all these
marked trees are distinct.
\end{poc}

We have counted the marked trees. It remains to control how much information
can be lost when the center and the ordering of its neighbours are forgotten.

\begin{claim}
For all sufficiently large $m$, each unrooted tree produced above comes
from at most $O_k(n/m)$ choices of the marked center and the ordering of
its neighbours.
\end{claim}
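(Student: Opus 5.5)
The plan is to bound the number of marked data by an isomorphism-invariant count inside the unrooted tree itself. Fix a tree $T$ produced by the construction in the previous claim, and call a vertex of $T$ \emph{big} if it has at least $m/(2k)$ leaf neighbours in $T$. Call a vertex a \emph{candidate} if it has degree exactly $k$ and all $k$ of its neighbours are big. Both notions depend only on the isomorphism type of $T$. By the construction, every admissible marking has as its centre a copy of $b_0$: a vertex of degree $k$ whose neighbours are the $k$ hubs, each carrying at least $m/(2k)$ leaves. So every admissible marked centre is a candidate. It therefore suffices to show that $T$ has $O_k(n/m)$ candidates, since each candidate admits at most $k!$ orderings of its neighbours.

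First I will bound the number of big vertices. Distinct vertices have disjoint sets of leaf neighbours, except in the degenerate case of a single-edge tree, which is excluded because $n>2$ here. Each big vertex owns at least $m/(2k)$ leaves, and for $m\ge 2k$ this is at least $1$. Hence the set $B$ of big vertices satisfies
\[
|B|\le \frac{n}{m/(2k)}=\frac{2kn}{m}.
\]
A candidate has degree $k\ge3$, so it is not a leaf. Since $m/(2k)\ge1$, a candidate also owns no leaf neighbours, and so it is not big; thus the set $Q$ of candidates is disjoint from $B$.

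Next I will bound $|Q|$ by acyclicity. Consider the subgraph $F$ of $T$ consisting of all edges between $Q$ and $B$. It is a forest on $|Q|+|B|$ vertices, so it has at most $|Q|+|B|-1$ edges. On the other hand, each candidate sends exactly $k$ edges into $B$, so $F$ has $k|Q|$ edges. Therefore
\[
k|Q|\le |Q|+|B|-1,
\qquad\text{and hence}\qquad
|Q|\le\frac{|B|}{k-1}\le \frac{2kn}{(k-1)m}.
\]
Multiplying by the $k!$ possible orderings of the neighbours of each candidate gives at most $O_k(n/m)$ marked choices per unrooted tree, as required.

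I expect the only delicate point to be the invariance check. The constraint identifying $b_0$ must be phrased purely in terms of $T$, namely degree $k$ together with $k$ big neighbours, rather than in terms of the labels $S$. Otherwise the count would not bound the fibres of the forgetful map. The acyclicity step is what absorbs the possibility that many other degree-$k$ vertices, for instance in the remainder forest, are also adjacent to hubs.
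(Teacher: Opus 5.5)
Your proof is correct and follows the paper's argument. You characterise the possible images of $b_0$ intrinsically as degree-$k$ vertices all of whose neighbours are heavy, bound the heavy vertices by $O_k(n/m)$, use that the candidate--heavy edges form a forest to get $k|Q|\le |Q|+|B|-1$, and multiply by $k!$. The only difference is cosmetic: you measure heaviness by the number of leaf neighbours, which gives $2kn/m$ via disjoint leaf sets, whereas the paper uses degree at least $m/(2k)$ and the degree sum, which gives $4kn/m$ and excludes candidates because $k<m/(2k)$.
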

\begin{poc}
In any resulting tree, let $Q$ be the set of vertices of degree at least
$m/(2k)$. The degree sum gives $|Q|\le4kn/m$. Every possible image of
$b_0$ has degree $k$, lies outside $Q$, and has all its neighbours in
$Q$. If $B$ is the set of these possible images, then the edges between
$B$ and $Q$ form a forest. Hence
\[
 k|B|\le |B|+|Q|-1,\qquad
 |B|\le\frac{|Q|-1}{k-1}=O_k(n/m).
\]
Each center has at most $k!$ neighbour orderings, proving the claim.
\end{poc}

Combining the two claims gives
\[
 \tauiso(G)\ge a_k\frac{m^k}{n}\,q(J,\rho)
\]
for all sufficiently large $m$, after decreasing $a_k$ if necessary.
For bounded $m$, use one fixed leaf vector for every chosen forest.
The same recovery gives at least $q(J,\rho)$ marked types, and there
are at most $nk!$ choices of marks. A further decrease of $a_k$ covers
these cases as well.
\end{proof}

The lemma already bounds a group of DFS leaves with common selected
neighbours: deleting those leaves leaves a connected tree containing
$S$, and we may use $q(J,\rho)\ge1$. For $X=X(S)$, the set $X$ contains every vertex with neighbourhood
$S$. Together with the degree condition, this gives enough edges in
the contracted remainder for Lemma~\ref{lem:surplus} to provide the
extra factor $r+1$.

\begin{proof}[Proof of Proposition~\ref{prop:amplify}]
Write $R=V(G)\setminus(S\cup X)$. There are no edges between $X$ and $R$.
No vertex of $R$ is isolated in $G[R]$: otherwise all its neighbours would
lie in $S$, and its degree would put it in $X$.

Contract $S$ in $G-X$ to a root $\rho$, and delete loops and repeated
edges. The resulting graph $J$ is connected and has $r+1$ vertices.
A vertex of $R$ with no neighbour in $S$ retains degree at least $d$.
A vertex with a neighbour in $S$ also has a neighbour in $R$, so its degree
in $J$ is at least two. Thus every nonroot vertex has degree at least two,
and every degree-two nonroot vertex is adjacent to $\rho$.

If $t$ nonroot vertices have degree two, then $\deg_J(\rho)\ge t$.
Consequently,
\[
 2e(J)\ge 2t+3(r-t)+t=3r.
\]
Lemma~\ref{lem:surplus} gives
\[
 q(J,\rho)\ge e(J)-r+1\ge r/2+1.
\]
Apply Lemma~\ref{lem:pool} with $k=d$ and $b_d=a_d/2$ to obtain
\eqref{eq:amplification}.

Now assume $\tauiso(G)\le Cn^{d-1}$. If $m\ge n/2$, then
\eqref{eq:amplification} gives $r+1\le 2^dC/b_d$.
If $m<n/2$ and $n$ is sufficiently large, then $r+1\ge n/3$, so
$m^d\le(3C/b_d)n^{d-1}$. This proves both conclusions.
\end{proof}

\subsection{Proof of Proposition~\ref{prop:large-class}}

Assume throughout this subsection that every degree-$d$
common-neighbourhood class has fewer than $n/2$ vertices. By
Proposition~\ref{prop:amplify}, every such class is then sublinear.
We will derive a contradiction in two steps: first we find many equal-size
nontrivial pieces in a DFS tree, and then we show that changing these pieces
already gives too many tree types.

At each nonleaf vertex of a rooted tree, choose a child whose descendant
subtree has maximum order and call it the \emph{heavy child}; call all other
children \emph{light}. We fix ties once and for all. A \emph{light subtree}
is the descendant subtree rooted at a light child. Every light child has a
heavy sibling whose descendant subtree is at least as large.

\subsubsection{Step 1: find many equal-size pieces.}

\begin{lemma}\label{lem:many-light}
Fix $d\ge3$ and $C>0$. For all sufficiently large $n$, let $G$ be a
connected $n$-vertex graph with $\delta(G)\ge d$ and
$\tauiso(G)\le Cn^{d-1}$. Suppose that every degree-$d$
common-neighbourhood class has fewer than $n/2$ vertices.
Then every DFS tree of height $H$ has at least
\[
 N\ge c_{d,C}\frac{n}{H^{2d+5}}
\]
light children whose descendant subtrees all have one common order
$b\ge2$, where $c_{d,C}>0$ depends only on $d,C$.
\end{lemma}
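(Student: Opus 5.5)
Our plan is to count leaves of the DFS tree $T$, group them by their parent and by their neighbourhoods, and use the small-class hypothesis to show that most of them sit inside small light subtrees. Fix a DFS tree $T$ of height $H$. In a DFS tree every non-tree edge joins an ancestor–descendant pair, so a leaf $v$ of $T$ has all of its $\ge d$ neighbours on its root path, which has at most $H$ vertices.

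\textbf{Step 1: most vertices lie near leaves.} First we show that $T$ has at least $n/(2H)$ leaves. Every root-to-leaf path has at most $H$ vertices, so the vertices are covered by $\ell$ paths of order at most $H$, where $\ell$ is the number of leaves; hence $\ell\ge n/H$. Next we group the leaves by the set of their first $d$ neighbours, ordered along the root path. For a $d$-set $S$ on a common ancestor chain, the leaves whose neighbourhood is exactly $S$ form a class $X(S)$. By Proposition~\ref{prop:amplify} together with the hypothesis, $|X(S)|\le K_{d,C}n^{1-1/d}$. Leaves with more than $d$ neighbours, or leaves whose chosen $d$-set is not their whole neighbourhood, are handled by Lemma~\ref{lem:pool}. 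If $X'$ is the set of leaves sharing a parent chain and adjacent to all of a fixed $d$-set $S$, then deleting $X'$ leaves a connected tree, so Lemma~\ref{lem:pool} with $q\ge1$ gives $\tauiso(G)\ge a_d|X'|^d/n$. Thus every such group has size $O_{d,C}(n^{1-1/d})\cdot$, and certainly $o(n/H^{O(1)})$.

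\textbf{Step 2: leaves cannot all hang from few parents.} Fix a parent $p$ and consider its leaf children. Their neighbourhoods are $d$-subsets of the $\le H$ ancestors of $p$, together with $p$ itself. There are at most $H^{d}$ choices of the set of $d$ lowest neighbours, and each group is small by Step 1. Hence each parent has at most $H^{d}\cdot O(n^{1-1/d})$ leaf children. A leaf child of $p$ is a light child unless it is the heavy child, and it is the heavy child only if $p$ has only leaf children. In that case we pass to $p$ itself, which has a subtree of order at least $2$. In either case each leaf $v$ of $T$ yields a light subtree of order at least $2$ with few leaves: either the subtree at its parent (when all siblings are leaves) or the subtree $\{v\}$ together with others.

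The cleaner route, which we will actually carry out, is a heavy-path argument. Every vertex lies in at most $H$ light subtrees, since the light ancestors along a root path number at most $H$. Consider the minimal light subtrees of order at least $2$, that is, light children $u$ with $|T_u|\ge2$. Leaves not contained in any such subtree are either light singleton children or lie on heavy paths ending at leaves. A light singleton child $v$ has a heavy sibling, and the singletons attached to one parent number $O(H^dn^{1-1/d})$ by Step 1. Bounding the number of parents that carry many singletons uses the fact that these parents are distinct vertices on paths of order $H$. We expect a polynomial-in-$H$ fraction of leaves to lie in light subtrees of order between $2$ and some bound. \emph{This bookkeeping is the main obstacle}: converting ``classes are sublinear'' into ``$n/H^{O(d)}$ vertices lie in light subtrees of bounded-ish order $\ge2$.'' Our first attempt will be to charge each leaf to its lowest light ancestor subtree of order $\ge2$, which exists unless the leaf is a singleton light child or a heavy-path end, and then show that these exceptional leaves total at most $n/(4H)$ using the class bound.

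\textbf{Step 3: pigeonhole on the order.} Next we take the light children $u$ with $2\le|T_u|$ that are minimal with respect to containing the charged leaves; these subtrees are pairwise disjoint. If they cover at least $n/H^{d+2}$ leaves, then, since the total order of disjoint subtrees is at most $n$, a dyadic pigeonhole over sizes yields one order $b\ge2$ attained by at least $n/H^{2d+5}$ of them. Here sizes above $H^{d+3}$ contribute at most $n/H^{d+3}$ subtrees, and there are at most $H^{d+3}$ sizes below that. Adjusting constants gives $N\ge c_{d,C}n/H^{2d+5}$.
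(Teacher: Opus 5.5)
There is a genuine gap, at the point you yourself flag as ``the main obstacle''. Per-group bounds cannot by themselves give $n/H^{O(d)}$ nonleaf vertices. All they say is that each parent carries at most $H^{O(d)}n^{1-\varepsilon}$ leaf children, so $\ge n/2$ leaves only force about $n^{\varepsilon}/H^{O(d)}$ parents. For example, take roughly $n^{1/d}$ parents, each carrying a single class of about $n^{1-1/d}$ degree-$d$ leaves. This is compatible with every per-group bound you derive, so no charging scheme based on the class bound can exclude it. Your remark that the parents are distinct vertices on paths of order $H$ gives no upper bound on how many of them carry many leaves.

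The missing idea is a global argument across parents. The paper first pigeonholes on the increasing tuple of ancestor depths of the selected neighbours, so that all groups have parents at one common depth $j$. Take the $t$ parents carrying at least $R=2d(d+1)$ leaves of this pattern. At each such parent $w$, keep $y_w\in\{0,\dots,R\}$ of them and move the rest to their shallowest selected neighbour, which lies strictly above level $j$. The degree multiset at level $j$ is then a fixed multiset plus $\{b_w+y_w\}$. Lemma~\ref{lem:affine} gives $\binom{t+R}{R}$ rooted types, and since there are at most $Cn^d$ rooted types, $t=O_{d,C}(n^{d/R})$. Only then do the large groups carry $o(n/H^{d+1})$ leaves. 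The remaining leaves lie in groups of size less than $R$ with distinct parents, which forces $I\ge c_{d,C}n/H^{d+1}$ nonleaf vertices.

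Two further steps fail as written. First, consider leaves of degree larger than $d$. A common $d$-set together with Lemma~\ref{lem:pool} at $k=d$ gives only $a_d|X'|^d/n\le Cn^{d-1}$, i.e.\ $|X'|=O(n)$, not $O(n^{1-1/d})$. You must select $d+1$ neighbours, including the parent so that grouping by parent fixes them, and apply the lemma with $k=d+1$; this yields $O(n^{d/(d+1)})$.

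Second, in Step~3, knowing that disjoint light subtrees cover $n/H^{d+2}$ leaves says nothing about how many subtrees there are. The subtrees of order above $H^{d+3}$ are few, but they may contain all of those leaves. You need to count subtrees directly, as the paper does:
\begin{itemize}
\item each heavy path has at most $H$ vertices, so there are at least $I/H-1$ light children whose subtrees have order at least $2$;
\item since $\sum_v|T_v|\le nH$, at most $nH/B$ of these subtrees have order above $B$;
\item taking $B\approx H^{d+3}$ and pigeonholing over orders in $\{2,\dots,B\}$ gives $N\ge c_{d,C}n/H^{2d+5}$.
\end{itemize}
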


The proof has two steps. We first show that the DFS tree has many
nonleaf vertices. We then use only the tree structure to select many light
subtrees of one common order.

\begin{lemma}\label{lem:nonleaf}
Under the hypotheses of Lemma~\ref{lem:many-light}, every DFS tree
of height $H$ has at least
\[
 I\ge c_{d,C}\frac{n}{H^{d+1}}
\]
nonleaf vertices, for some constant $c_{d,C}>0$.
\end{lemma}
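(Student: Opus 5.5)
The plan is to bound the number of DFS leaves in terms of the number $I$ of nonleaf vertices. The main input is the DFS property: every non-tree edge joins an ancestor to a descendant. A leaf $\ell$ of $T$ has no descendants, so all of its at least $d$ neighbours lie on its ancestor chain, which has at most $H$ vertices, and one of them is its parent. Two consequences follow. The leaves form an independent set in $G$. And deleting any set of leaves leaves a connected subtree of $T$ that contains all ancestors of those leaves.

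\emph{Step 1 (labelling leaves by hub sets).} For each leaf $\ell$ with parent $p$, fix a $d$-subset $S_\ell\subseteq N_G(\ell)$ containing $p$. Then $S_\ell$ lies on the root-to-$p$ chain. Each nonleaf $p$ has one such chain, of length at most $H$, so the number of distinct sets $S_\ell$ is at most $I\binom{H}{d-1}\le IH^{d}$. For a fixed $d$-set $S$, let $Y(S)$ be the set of leaves $\ell$ with $S_\ell=S$. The set $Y(S)$ is independent and completely joined to $S$. Moreover, $G-Y(S)$ contains the rest of $T$, which is connected and contains $S$. So Lemma~\ref{lem:pool} applies with $X=Y(S)$ and $q(J,\rho)\ge1$, giving
\[
  |Y(S)|^d\le a_d^{-1}\,n\,\tauiso(G)\le a_d^{-1}Cn^d .
\]
This alone only shows $|Y(S)|=O(n)$. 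To do better, I split $Y(S)$. The leaves whose neighbourhood is exactly $S$ lie in $X(S)$. By hypothesis and Proposition~\ref{prop:amplify} there are at most $K_{d,C}n^{1-1/d}$ of them. Every other leaf of $Y(S)$ has an extra neighbour on the same chain. Grouping those leaves by a chosen $(d+1)$-set and applying Lemma~\ref{lem:pool} with $k=d+1$ bounds each such group by $O(n^{1-1/(d+1)})$. At the cost of another factor $H$, every group is therefore sublinear.

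\emph{Step 2 (using the independence of the groups).} Step~1 gives $n-I\le IH^{d+1}n^{1-\delta}$, which is weaker than the target. To obtain the claimed bound I would use the groups together rather than one at a time. The tree obtained by deleting all leaves is fixed, and each group $Y(S)$ can redistribute its leaves among its $d$ hubs. If $I<c\,n/H^{d+1}$, pigeonhole gives many groups, each of size at least a large multiple of $H$. I would then choose a family of such groups whose hub sets are pairwise distinguishable once the nonleaf tree is fixed and marked. For example, take hubs at different depths on one chain, or attached to different nonleaf vertices. I would then apply Lemma~\ref{lem:affine} to the leaf-count vectors at the hubs. Each group contributes an affine set of dimension $d-1$, so $t$ groups of size $s$ give about $(ts)^{d-1}$ sums. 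Combined with the freedom to move leaves between groups sharing a hub, this should yield at least $n^{d}/\mathrm{poly}(H)$ marked types. Forgetting the marks costs a factor $n$, by the degree argument in the second claim of Lemma~\ref{lem:pool}: few vertices have large degree, and the marked objects are forced to sit next to them. The result contradicts $\tauiso(G)\le Cn^{d-1}$ once $c_{d,C}$ is small.

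\emph{Main obstacle.} The hard step is Step~2. Leaf-count vectors from different groups land on overlapping hubs, and sums of such vectors must remain recoverable after the labels are forgotten. I would first try to restrict to groups whose hub sets all contain their common parent $p$, with distinct parents, and read each group's contribution from the degree of $p$ in the fixed nonleaf tree. The exponent $d+1$ in the statement suggests that the proof loses exactly one $H^{d}$ factor from counting hub sets and one further factor $H$ from recovering marks along a chain. I would tune the bookkeeping to land there.
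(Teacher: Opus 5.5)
Your Step~1 is sound and matches the paper's first claim. Leaves with exactly $d$ neighbours lie in a class $X(S)$ and are controlled by Proposition~\ref{prop:amplify}. The others are bounded by $O(n^{d/(d+1)})$ through Lemma~\ref{lem:pool} with $k=d+1$. As you note, this alone gives only $I\gtrsim n^{\delta}/H^{d+1}$. Step~2, where the real content lies, does not close, for three reasons. First, leaf-count vectors on $d$ hubs have affine dimension $d-1$, so Lemma~\ref{lem:affine} yields about $(ts)^{d-1}\le n^{d-1}$ sums. That is compatible with $\tauiso(G)\le Cn^{d-1}$. Second, even your hoped-for $n^d/\mathrm{poly}(H)$ marked types shrink to $n^{d-1}/\mathrm{poly}(H)$ once you pay the factor $n$ for forgetting the marks. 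So there is no contradiction either way. Third, your groups are indexed by hub sets, so their parents sit at different depths. You then have no invariant of the unlabelled tree that tells which hub received which leaves. The sentence ``this should yield'' stands in for exactly the missing argument.

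The missing idea is to aim not for a contradiction but for a bound on the number of \emph{large} groups. Pigeonhole over the at most $2H^{d+1}$ increasing depth tuples of the selected neighbours. This count is independent of $I$, and it is where $H^{d+1}$ comes from. Some tuple is shared by at least $n/(4H^{d+1})$ leaves, and grouping these by parent puts every parent $w$ at one common depth $j$. Let $t$ be the number of groups with at least $R:=2d(d+1)$ leaves. In each such group, keep $y_w\in\{0,\dots,R\}$ of $R$ chosen leaves at $w$ and move the others to their shallowest selected neighbour. Because $k\ge3$, that neighbour lies strictly above level $j$. In the tree rooted at the DFS root, the degree multiset at level $j$ is then a fixed multiset plus $\{b_w+y_w\}$. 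Lemma~\ref{lem:affine}, applied to the $R$-dimensional sets $\{e_{b_w},\dots,e_{b_w+R}\}$, gives $\binom{t+R}{R}\le n\tauiso(G)\le Cn^d$, hence $t=O(n^{d/R})$. With your Step~1 bound, the large groups contain only $O(n^{d/(d+1)+d/R})=o(n/H^{d+1})$ leaves. So at least $n/(8H^{d+1})$ of the chosen leaves lie in groups of fewer than $R$ leaves with pairwise distinct parents, and $I\ge n/(8RH^{d+1})$ follows.
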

\claimsof{lem:nonleaf}
\begin{proof}
If $I\ge n/2$, there is nothing to prove. Otherwise the tree has at least
$n/2$ leaves. We use the path bound
$H=O_{d,C}((\log n)^4)$ from Proposition~\ref{prop:path}.

\begin{claim}
There is a set of at least $n/(4H^{d+1})$ leaves which can be grouped by
their parents so that every group has size at most
$K_{d,C}n^\beta$, where $\beta=d/(d+1)$.
\end{claim}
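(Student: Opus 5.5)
The plan is to group the leaves by their neighbourhoods rather than by their parents alone, and to bound each group using Proposition~\ref{prop:amplify} or Lemma~\ref{lem:pool}. Fix a DFS tree $T$ of height $H$ with at least $n/2$ leaves. Every non-tree edge of a DFS tree joins an ancestor to a descendant. Hence all neighbours of a leaf $v$ lie on the root-to-$v$ path, which has at most $H$ vertices besides $v$. Since $\delta(G)\ge d$, each leaf has at least $d$ such ancestor neighbours. Two leaves are never in an ancestor relation, so the leaves form an independent set.

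Now fix a parent $p$ and sort its leaf children by neighbourhood. A leaf of degree exactly $d$ is assigned its full neighbourhood $S$, a $d$-subset of the ancestors of $p$ together with $p$. A leaf of degree at least $d+1$ is assigned the set $S'$ of its $d+1$ highest (say) neighbours, a $(d+1)$-subset of the same chain. The number of possible labels at $p$ is at most $\binom{H}{d}+\binom{H}{d+1}\le 2H^{d+1}$, say for $H\ge 2$ and after adjusting constants. We keep, for each $p$, only its largest label class. This keeps at least a $1/(2H^{d+1})$ fraction of the leaves, so at least $n/(4H^{d+1})$ leaves in total. Each kept group is contained in a single parent's set of children, as the claim requires.

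It remains to bound the size of each group; this is the step that needs care.
\begin{itemize}
\item For a degree-$d$ label $S$, the group lies in $X(S)$. By the standing assumption $|X(S)|<n/2$, so Proposition~\ref{prop:amplify} gives size at most $K_{d,C}n^{1-1/d}\le K_{d,C}n^{\beta}$.
\item For a $(d+1)$-label $S'$, let $X$ be the group. Then $X$ is independent and fully joined to $S'$. Moreover $G-X$ is connected, because deleting leaves from the spanning tree $T$ leaves a tree, and so every component of $G-X$ meets $S'$.
\end{itemize}
In the second case, Lemma~\ref{lem:pool} with $k=d+1\ge3$ and the trivial bound $q(J,\rho)\ge1$ gives
\[
Cn^{d-1}\ge\tauiso(G)\ge a_{d+1}|X|^{d+1}/n,
\]
so $|X|\le (Cn^d/a_{d+1})^{1/(d+1)}=O_{d,C}(n^{\beta})$. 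Enlarging $K_{d,C}$ covers both cases.

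The main obstacle is the exponent in the second case. Pooling on only $d$ common neighbours would give merely $|X|=O(n)$, which is useless. Taking $k=d+1$ is exactly what produces $n^{d/(d+1)}$. This choice is available only because leaves of degree exactly $d$ are instead controlled by the common-neighbourhood assumption.
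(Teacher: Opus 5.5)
Your proof is correct and is essentially the paper's argument: it uses the same labelling (the full neighbourhood for degree-$d$ leaves, $d+1$ ancestor neighbours otherwise), the same pigeonhole over at most $2H^{d+1}$ labels, and the same bounds from Proposition~\ref{prop:amplify} and Lemma~\ref{lem:pool} with $q(J,\rho)\ge1$. The only difference is that you pigeonhole separately at each parent, whereas the paper fixes one global tuple of ancestor depths, which includes the parent as the deepest selected neighbour, and only then groups by parent; this puts all parents at a common depth $j$, which the next claim uses to read off $\{b_w+y_w\}$ from the degree multiset at level $j$, so your version would need a harmless extra pigeonhole over the at most $H$ parent depths at that point.
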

\begin{poc}
For a leaf of graph degree $d$, select all its neighbours. For any other
leaf, select $d+1$ neighbours, including its tree parent. Let
$k\in\{d,d+1\}$ be the number selected, and record their increasing tuple
of ancestor depths. There are at most $2H^{d+1}$ tuples, so one occurs for
at least $n/(4H^{d+1})$ leaves. Group these leaves by their parent $w$.
Within each group, the selected neighbours are the same: their vertices
are determined by $w$ and the recorded depths.

If $k=d$, every group lies in a degree-$d$ common-neighbourhood class, so
Proposition~\ref{prop:amplify} bounds its size by
$O_{d,C}(n^{1-1/d})$. If $k=d+1$, Lemma~\ref{lem:pool}, with
$q(J,\rho)\ge1$, bounds its size by $O_{d,C}(n^{d/(d+1)})$.
Its hypotheses hold because the group consists of DFS leaves. Since
$d/(d+1)>1-1/d$, the stated bound follows.
\end{poc}

Thus we have many leaves arranged in groups whose individual sizes are
controlled. We next show that only few of these groups can still be large.

\begin{claim}
Put $R=2d(d+1)$. The number $t$ of groups with at least $R$ leaves is
$O_{d,C}(n^{d/R})$. Consequently, all such large groups together contain
$o(n/H^{d+1})$ leaves.
\end{claim}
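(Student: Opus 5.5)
We bound $t$ by exhibiting many spanning-tree types built from the large groups, compare with $\tauiso(G)\le Cn^{d-1}$, and then deduce the second sentence by arithmetic.

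\emph{Setup.} Fix the tuple of ancestor depths from the previous claim and the $t$ groups of size at least $R$. Each group has a parent $w$ and a common set $S_w$ of $k\in\{d,d+1\}$ selected neighbours. All of them are ancestors of $w$, at the recorded depths, and $w\in S_w$ when $k=d+1$. From each large group keep exactly $R$ leaves. Since these are DFS leaves, all their neighbours lie on the ancestor chain. So reattaching any of them to any vertex of $S_w$, with the rest of the DFS tree fixed, again gives a spanning tree. Thus each group offers the compositions of $R$ into $k$ parts. As degree-change vectors on the hubs these form a set of affine dimension $k-1\ge d-1$.

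\emph{Counting step.} The aim is a lower bound of the shape $\tauiso(G)\ge n^{-O_d(1)}\,t^{R/d}$ up to constants; with $\tauiso(G)\le Cn^{d-1}$ and $R=2d(d+1)$ this gives $t=O_{d,C}(n^{d/R})$. I would proceed as follows.
\begin{enumerate}
\item Mark a bounded number of vertices (the root and the endpoints of the recorded chain), costing at most a factor $n^{O(1)}$.
\item Record the degrees of the hubs. These degrees see exactly the sums of the leaf vectors, by the same recovery as in Lemma~\ref{lem:pool}: cancel the fixed branches and the non-leaf children.
\item Split the groups according to their hub sets. If many groups share one hub set, they act as one pool of $tR$ movable leaves, so Lemma~\ref{lem:pool} (or Lemma~\ref{lem:affine} with $m$ equal to the number of such groups) gives about $(tR)^{k-1}$ types.
\item If the hub sets are mostly distinct, the leaf vectors of distinct groups land in disjoint coordinates. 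Then choices in different groups multiply, giving at least $\binom{R+d-1}{d-1}^{\Omega(t)}$ types modulo relabelling, which is even stronger.
\end{enumerate}
Either way one of these estimates dominates $t^{R/d}/n^{O(1)}$ once $t$ exceeds $n^{d/R}$ by a large constant.

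\emph{Consequence.} Each group has size at most $K_{d,C}n^{\beta}$ with $\beta=d/(d+1)$, by the previous claim. Hence the large groups together contain at most
\[
 t\,K_{d,C}n^{\beta}=O_{d,C}\bigl(n^{\frac{1}{2(d+1)}+\frac{d}{d+1}}\bigr)=O_{d,C}\bigl(n^{1-\frac{1}{2(d+1)}}\bigr)
\]
leaves. By Proposition~\ref{prop:path}, $H=O_{d,C}((\log n)^4)$, so $H^{d+1}$ is polylogarithmic. The bound above is therefore $o(n/H^{d+1})$, as claimed.

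\emph{Main obstacle.} The hard part is the recovery in the counting step when the hub sets partly overlap. Different groups may share some hubs, and forgetting the labels could merge distributions from different groups. I would first try to reduce to disjoint or identical hub sets by pigeonholing on the at most $2H^{d+1}$ depth patterns. Then Lemma~\ref{lem:affine} applies to the leaf vectors restricted to the shared hubs, and the marked-chain degree recovery handles the rest. The polylogarithmic losses are absorbed because the target exponent $d/R$ leaves slack against $1-\beta$.
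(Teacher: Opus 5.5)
Your last step is fine. Given $t=O_{d,C}(n^{d/R})$, the bound $tK_{d,C}n^{\beta}=O_{d,C}(n^{1-1/(2(d+1))})=o(n/H^{d+1})$ is exactly how the paper finishes. The gap is in the counting step, which does not deliver the bound on $t$.

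\textbf{The target is too weak.} The inequality $\tauiso(G)\ge n^{-O_d(1)}t^{R/d}$, combined with $\tauiso(G)\le Cn^{d-1}$, only gives $t\le n^{(d-1+O_d(1))/(2(d+1))}$. This is not $O(n^{d/R})=O(n^{1/(2(d+1))})$. It is not even enough for the second sentence, which needs $t$ below $n^{1/(d+1)}$ by a polylogarithmic factor. What you need is roughly $t^R$ rooted types, compared against the at most $n\tauiso(G)\le Cn^d$ rooted types that exist.

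\textbf{None of your cases gives a high power of $t$.}
\begin{itemize}
\item Your item~3 never happens. The parent $w$ of a DFS leaf is its deepest neighbour, and it is selected both when $k=d$ and when $k=d+1$. So distinct groups always have distinct hub sets. In any case, a pooled estimate would be a polynomial of degree at most $d+1$ in $t$, giving at best $t\lesssim n^{d/(d+1)}$.
\item Your item~4's product bound $\binom{R+d-1}{d-1}^{\Omega(t)}$ is unjustified. After labels are forgotten the groups are interchangeable, so only a symmetric function of the local choices (a sum or multiset) can be read off. With hub-degree vectors, whose per-group affine dimension is $k-1\le d$, Lemma~\ref{lem:affine} gives only $\binom{t+k-1}{k-1}$.
\item The overlap problem you flag is real: shared ancestral hubs mix the contributions of different groups. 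But even without overlaps, reading hub degrees is capped at dimension $k-1$.
\end{itemize}

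\textbf{The missing idea.} Read only the private hub of each group, and encode its degree as a unit vector, so that the per-group dimension is $R$ rather than $k-1$.
\begin{itemize}
\item All parents $w$ lie at one depth $j$, the deepest entry of the fixed tuple.
\item Keep $y_w\in\{0,\dots,R\}$ of the $R$ chosen leaves at $w$. Move the others to the shallowest selected neighbour, which has depth at most $j-2$ because $k\ge3$.
\item In the tree rooted at the DFS root, level $j$ keeps its vertex set. Its degree multiset becomes a fixed multiset plus $\{b_w+y_w\}$, with fixed integers $b_w$.
\item Write multisets of degrees as multiplicity vectors in $\mathbb R^{\{0,\dots,n\}}$. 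Then group $w$ contributes the set $\{e_{b_w},\dots,e_{b_w+R}\}$, which has affine dimension $R$.
\item Lemma~\ref{lem:affine} gives $\binom{t+R}{R}\ge(t/R)^R$ distinct rooted types. Hence $(t/R)^R\le Cn^d$, and so $t=O_{d,C}(n^{d/R})$.
\end{itemize}
The shared ancestors that absorb the moved leaves sit above level $j$ and are never read, so overlaps play no role. This is precisely why $R$ is taken large.
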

\begin{poc}
Select $R$ leaves at each of the $t$ corresponding parents $w$. For
$0\le y_w\le R$, keep $y_w$ of these leaves at $w$ and move the others
to their shallowest selected neighbour.

All the parents have one common depth $j$. Since $k\ge3$, each moved leaf
ends strictly above level $j$. The degree multiset at level $j$ is
therefore a fixed multiset plus $\{b_w+y_w\}$, with fixed integers $b_w$.
Lemma~\ref{lem:affine}, applied to the $R$-dimensional sets
$\{e_{b_w},\ldots,e_{b_w+R}\}$, gives at least
$\binom{t+R}{R}$ rooted tree types. There are at most
$n\tauiso(G)\le Cn^d$ rooted types altogether, so
$t=O_{d,C}(n^{d/R})$. By the first claim, the number of leaves in the
large groups is at most
\begin{align*}
 O_{d,C}(n^{\beta+d/R})
 &=O_{d,C}(n^{1-1/(2(d+1))})
  =o(n/H^{d+1}).
 \tag*{\qedhere}
\end{align*}
\end{poc}

After the second claim, almost all leaves in the chosen pattern lie in groups
of bounded size. We can therefore finish by counting their distinct parents.

The fixed tuple occurs for at least $n/(4H^{d+1})$ leaves. After removing
the large groups, at least $n/(8H^{d+1})$ remain for large $n$. Each
remaining group has fewer than $R$ leaves and a different parent. Hence
there are at least $c_{d,C}n/H^{d+1}$ nonleaf vertices, as required.
\end{proof}

We now have many nonleaf vertices. Partitioning the tree into heavy
paths, each with at most $H$ vertices, gives many paths starting at
nonleaf light children. The next lemma discards very large subtrees
and chooses a common order among those left. No degree condition is
needed for this step.

\begin{lemma}\label{lem:light-extraction}
Let $T$ be a rooted tree of order $n$, height $H$, and with $I$ nonleaf
vertices. For every integer $B\ge2$, some $b\in\{2,\ldots,B\}$ occurs as
the order of the descendant subtree of at least
\[
 \frac{I/H-1-nH/B}{B}
\]
light children, provided this expression is positive.
\end{lemma}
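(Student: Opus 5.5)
Decompose $T$ into heavy paths: each heavy path starts at the root or at a light child and descends through heavy children until it reaches a leaf. Every vertex lies on exactly one heavy path.

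\emph{Counting paths.} Each heavy path has at most $H$ vertices, or $H+1$ depending on the height convention, which only changes constants; I assume height counts vertices. Each nonleaf vertex lies on some heavy path, and every heavy path contains at least one nonleaf vertex unless it is a single leaf. Hence at least $I/H$ heavy paths contain a nonleaf vertex. Discarding the path at the root leaves at least $I/H-1$ light children $c$ whose heavy path contains a nonleaf vertex. The top vertex of such a path is $c$ itself, and it lies on a path with a nonleaf vertex, so $c$ is a nonleaf vertex. Therefore $|V(T_c)|\ge2$.

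\emph{Discarding large subtrees.} Call a light child big if its subtree has order greater than $B$. I bound the number of big light children by $nH/B$. Charge each vertex $v$ to the light children on its root path whose subtrees contain it. A root path has at most $H$ vertices, so each vertex lies in at most $H$ light subtrees. Summing gives
\[
\sum_{c\text{ light}}|V(T_c)|\le nH.
\]
Hence at most $nH/B$ light subtrees have order greater than $B$. The heavy-path structure gives the sharper $n\log_2 n$, but the crude bound suffices here.

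\emph{Pigeonhole.} At least $I/H-1-nH/B$ light children remain, each with subtree order in $\{2,\ldots,B\}$. That range contains $B-1\le B$ values, so some order $b$ occurs at least $(I/H-1-nH/B)/B$ times. If the expression is positive, at least one such child exists, and $b$ is well defined.

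The main obstacle is to get the count $I/H$ right, namely to ensure that every heavy path contributes a light child of order at least $2$. This hinges on the observation that a heavy path containing a nonleaf vertex must begin at a nonleaf vertex. That holds because heavy paths are top-down chains in which every vertex except the last is a nonleaf. I will state this explicitly. I will also check the height convention so that each path has at most $H$ vertices, adjusting by a constant if necessary.
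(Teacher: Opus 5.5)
Your proposal is correct and follows the paper's proof essentially step by step: the heavy-path decomposition yields at least $I/H-1$ light children whose subtrees have order at least $2$, the bound $\sum |T_c|\le nH$ discards at most $nH/B$ subtrees of order exceeding $B$, and pigeonhole over the orders $\{2,\ldots,B\}$ finishes. On the height convention, the paper phrases the first count as ``each heavy path contains at most $H$ \emph{nonleaf} vertices''; since every heavy path ends at a leaf, this holds under either convention, so no constant adjustment is needed.
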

\begin{proof}
Partition the tree into paths that follow heavy-child edges. Every
such path contains at most $H$ nonleaf vertices. Except for the
path starting at the root, every heavy path containing a nonleaf vertex
starts at a nonleaf light child. There are therefore at least $I/H-1$
light children whose descendant subtrees have at least two vertices.
Also,
\[
 \sum_{v\in V(T)}|T_v|
 =\sum_{x\in V(T)}|\{\text{ancestors of }x\text{, including }x\}|
 \le nH.
\]
At most $nH/B$ descendant subtrees have order greater than $B$.
Grouping the remaining light subtrees by their orders gives the stated
bound.
\end{proof}

We choose $B$ large enough that discarding subtrees with more than
$B$ vertices removes only a small part of the count. Since $H$ is
bounded by a power of $\log n$, choosing one common order still leaves
enough subtrees.

\begin{proof}[Proof of Lemma~\ref{lem:many-light}]
By Lemma~\ref{lem:nonleaf}, the DFS tree has
$I\ge cn/H^{d+1}$ nonleaf vertices, with $c=c_{d,C}>0$. Set
 $B=\left\lceil\frac{4H^{d+3}}{c}\right\rceil.$
Then $nH/B\le I/(4H)$. Since $H=O_{d,C}((\log n)^4)$ by
Proposition~\ref{prop:path}, we also have $I/H\to\infty$.
For sufficiently large $n$, Lemma~\ref{lem:light-extraction} gives
\begin{align*}
  N\ge\frac{I}{2HB}\ge c'_{d,C}\frac{n}{H^{2d+5}}.
  \tag*{\qedhere}
\end{align*}
\end{proof}

\subsubsection{Step 2: count the pieces.}

The selected subtrees are disjoint and can be changed independently.
The next lemma shows that these changes already give too many unrooted
tree types.

\begin{lemma}\label{lem:guarded}
Suppose $\delta(G)\ge d\ge3$, and a DFS tree of height $H$ has $N$
light children whose descendant subtrees all have one order $b\ge2$.
There is a constant $c_d>0$, depending only on $d$, such that if
$N/H^2$ is sufficiently large in terms of $d$, then at least one of
the following holds:
\[
 \tauiso(G)\ge c_d(N/H)^d,
 \qquad
 \tauiso(G)\ge c_d\frac{(N/H^2)^{d+1}}{n}.
\]
\end{lemma}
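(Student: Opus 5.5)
Let $U_1,\dots,U_N$ be the vertex sets of the selected light subtrees, rooted at light children $c_1,\dots,c_N$ with parents $p_1,\dots,p_N$. By the DFS property, every neighbour of $U_i$ outside $U_i$ is an ancestor of $c_i$, so it lies on the root-to-$p_i$ chain; call this set of ancestor neighbours $S_i$. Each $U_i$ is connected, has $|U_i|=b\ge2$, and every vertex of $U_i$ has degree at least $d$ in $G[U_i\cup S_i]$. Lemma~\ref{lem:multiroot} therefore applies: the $S_i$-rooted spanning forests of $U_i$ have branch profiles of affine dimension at least $d$.

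We relabel each root $s\in S_i$ by its depth, as the remark after Lemma~\ref{lem:multiroot} allows. Each $U_i$ then yields a set $A_i\subset\mathbb R^{U}$ of affine dimension at least $d$, with coordinates indexed by pairs (depth, rooted type on at most $b$ vertices). Replacing the tree on each $U_i$ by any chosen forest, while keeping $T-\bigcup U_i$ fixed, gives a spanning tree. Every selected light child has a heavy sibling whose subtree has order at least $b$ and which is left untouched.

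We pigeonhole in two stages.
\begin{itemize}
\item First, the parents $p_i$ have at most $H$ possible depths, so we keep $N/H$ pieces whose parents share one depth $j$.
\item Second, we split by whether the pieces hang from few or many distinct parents.
\end{itemize}

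\emph{Case 1: many parents.} Suppose at least $N/H^2$ of these pieces have distinct parents; keep one piece per parent. Mark the root of $T$. The multiset of rooted branch data then determines a vector summing the profiles from all pieces together with fixed contributions. We aim to recover, from the rooted tree, the vector $\sum_i a_i$ with $a_i\in A_i$. The key quantity is the multiset, over all vertices at each depth $\le j$, of the rooted types of their hanging branches of order at most $b$. The untouched heavy subtrees and the fixed remainder contribute a fixed multiset, which can be cancelled. This is the recovery step (Lemma~\ref{lem:recovery} in the overview), and it is the main obstacle. The difficulty is that a branch attached by the new forest at an ancestor $s$ of depth $<j$ must be distinguishable from pre-existing small branches there. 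Those pre-existing branches are fixed, so their contribution cancels. The only danger is changing which vertices are "small branches": the remaining part of $T$ above $U_i$ must stay recognisable. This is guaranteed because each modified $p_i$ keeps its heavy child of order at least $b$, so the spine through $p_i$ is unchanged in length.

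Given recovery, Lemma~\ref{lem:affine} gives at least $\binom{N/H^2+d}{d}$ sums, hence that many rooted types. Forgetting the root loses a factor $n$, which yields the second alternative with exponent adjusted. I expect that a sharper count is obtained by combining this with the $d+1$ degrees of freedom arising from the root-degree choice at $p_i$, giving $(N/H^2)^{d+1}/n$.

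\emph{Case 2: few parents.} Otherwise some parent carries many pieces, and in total at least $N/H$ pieces lie under a bounded number of parents at depth $j$. Here we instead use the guarded root: choose the root so that it is identifiable up to $O(H)$ choices, namely as an endpoint of a longest path, using $L(G)=O(H)$. Forgetting the root then costs only a polylogarithmic factor, and Lemma~\ref{lem:affine} with dimension $d$ gives $c_d(N/H)^d$.

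The requirement that $N/H^2$ be large ensures that the binomial coefficients are comparable to these powers.
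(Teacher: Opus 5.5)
Your setup matches the paper: attachment sets $S_i$ on the ancestor chain, affine dimension $d$ from Lemma~\ref{lem:multiroot}, depth relabelling, one common depth, and the untouched heavy sibling that makes recovery possible. The genuine gap is that the proposal never controls the cost of forgetting the root.

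\textbf{Case~1 is too weak.} Recovery rooted at the DFS root $\rho$ already works for all $M\ge N/H$ pieces at a common depth, with no need for distinct parents. Dividing by $n$ then gives only about $(N/H)^d/n$. In the application this is $n^{d-1}/(\log n)^{O(1)}$, and it is neither alternative of the lemma. Your Case~1 bound $(N/H^2)^d/n$ is weaker still. The hoped-for upgrade through ``$d+1$ degrees of freedom from the root-degree choice at $p_i$'' has no basis. The degree of $p_i$ is already encoded in the profile, and Lemma~\ref{lem:multiroot} guarantees only affine dimension $d$, which can be sharp. For example, take $d=3$ and let $U_i=\{u,v\}$ be an edge whose ends are both adjacent to the same two ancestors: the five profiles span an affine space of dimension exactly $3$.

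\textbf{Case~2 does not work either.} A tree can have linearly many endpoints of longest paths, so such an endpoint is not determined up to $O(H)$ choices. The bound $L(G)=O(H)$ does not follow from the height of a DFS tree. Rooting at such an endpoint need not keep each $S_i$ on the root--$p_i$ path with a subtree of order more than $b$ at $p_i$, which recovery requires. Even a polylogarithmic loss would not give the constant $c_d$ in $c_d(N/H)^d$. The many-parents/few-parents split is beside the point.

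\textbf{The missing idea.} You need to choose a root that stays recognisable, and the relevant dichotomy concerns where the shallowest attachment vertices $a_i$ lie. Take a centroid $z$ of $T$ and let $P$ be the $\rho$--$z$ path.

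\emph{If at least half the pieces have $a_i\notin V(P)$}, root at $z$. Each $S_i$ then lies on the $z$--$p_i$ path, and $W_i$ still hangs below $p_i$. Each varied piece, together with its attachment vertices, lies inside one component of $T-z$. Hence $z$ remains a centroid of every resulting tree, forgetting the root costs only a factor $2$, and this yields $c_d(N/H)^d$.

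\emph{Otherwise}, since $|P|\le H$, a single vertex $a\in V(P)$ is the shallowest attachment vertex of $K\ge(M-1)/(2H)$ pieces. Reattach about a third of them whole at $a$ as reserved pieces, vary another third, and root at $a$. Recovery still applies, and $a$ has degree at least $K/3$ in every resulting tree. A tree has at most $2(n-1)/r$ vertices of degree at least $r$, so forgetting the root costs only $O(n/K)$. This degree guard, not an extra profile dimension, is what produces the exponent $d+1$ in $c_d(N/H^2)^{d+1}/n$.
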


There are two issues. We first recover the sum of the local profiles from
the whole rooted tree. We then choose the root so that forgetting it costs
little. Lemma~\ref{lem:recovery} handles the first point; the rest of the
proof uses either a centroid or a high-degree root.

\begin{lemma}\label{lem:recovery}
Let $G$ be a graph with a partition
\[
 V(G)=B\mathbin{\dot\cup}U_1\mathbin{\dot\cup}\cdots
       \mathbin{\dot\cup}U_m,
\]
where each $G[U_i]$ is connected and $|U_i|=b\ge2$.
Let $T_0$ be a tree on $B$, rooted at $q$, with edges in $G$.
Suppose every vertex of each $U_i$ has degree at least $d\ge3$ in $G$, and
\[
 S_i:=N_G(U_i)\setminus U_i\subseteq B.
\]
For each $i$, assume that $S_i$ lies on the $q$--$p_i$ path in $T_0$ for
some $p_i\in S_i$, and that the descendant subtree of $T_0$ at $p_i$ has
more than $b$ vertices.
Keep $T_0$ and, independently for each $i$, add any $S_i$-rooted spanning
forest on $U_i\cup S_i$. The resulting trees have at least
$\binom{m+d}{d}$ types rooted at $q$.
\end{lemma}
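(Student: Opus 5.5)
The plan is to reduce the statement to Lemma~\ref{lem:affine}. First, for each $i$, let $A_i$ be the set of branch profiles of $S_i$-rooted spanning forests on $U_i\cup S_i$. The roots in $S_i$ should be relabelled by their depths in $T_0$; this is legitimate because $S_i$ lies on one root path, so distinct roots get distinct depths. Lemma~\ref{lem:multiroot} applies here: $|U_i|=b\ge2$, $G[U_i]$ is connected, and $G$ is connected through $B$, so some edge joins $U_i$ to $S_i$. Every vertex of $U_i$ has all its neighbours in $U_i\cup S_i$ and degree at least $d$. Hence each $A_i$ has affine dimension at least $d$, and the remark after that proof says the depth relabelling preserves this. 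Regard each $A_i$ as a subset of $\mathbb R^U$, where $U$ is the finite set of pairs (depth, rooted type of order at most $b$). Lemma~\ref{lem:affine} then gives at least $\binom{m+d}{d}$ distinct sums $\sum_i a_i$.

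Second, show that the rooted type of the resulting tree at $q$ determines the sum $\sum_i a_i$; this gives the bound. Every constructed tree $T$ contains $T_0$. Each $U_i$ contributes branches hanging off $T_0$ at vertices of $S_i$, and each such branch has order at most $b$. Take the multiset, over all depths $h$, of pairs $(h,R)$, where $R$ runs over the rooted types of descendant subtrees of $T$ at children of depth-$h$ vertices. The plan is to isolate, in a type-invariant way, the branches of $T$ that are not part of $T_0$, then count them by (attachment depth, rooted type). That count is exactly $\sum_i a_i$.

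The main obstacle is this isolation step, because the branches of the $U_i$ could in principle be confused with subtrees of $T_0$. This is where the hypothesis that the descendant subtree of $T_0$ at $p_i$ has more than $b$ vertices is used. I would argue by induction downward from the root of $T$, or equivalently by a canonical reconstruction. At a vertex $x$ of $T_0$, each child subtree in $T$ is either a $U$-branch, which is a subtree of $U_i$ of order at most $b$, or contains a $T_0$-child. The aim is to show that the rooted type at $q$ determines which child subtrees are the $T_0$ ones, up to automorphism, by peeling off all subtrees of order at most $b$ that can be $U$-branches.

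A cleaner route I would try first is to fix $T_0$ itself. Compare two trees $T,T'$ built on the same $T_0$ with a root-preserving isomorphism. The multiset of subtrees hanging at depth $h$ in $T$ equals the corresponding multiset from $T_0$ plus the $U$-branches at depth $h$, except that the $T_0$-subtrees are enlarged by branches below them. This enlargement is why one counts at the level of the complete forest of descendant subtrees. I would therefore reformulate the count as the multiset over all vertices $v$ of $T$ of the pair $(\text{depth}(v),\text{rooted type of }T_v)$. Restricted to $v$ in $B$, this multiset is not fixed.

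So I would instead use the following observation. A vertex $v\in U_i$ lies below some $s\in S_i$ whose branch has order at most $b$. Vertices of $B$ on the path to $p_i$ have descendant subtrees in $T$ of order greater than $b$, by the hypothesis on $p_i$ and because $T\supseteq T_0$. Thus the children of $B$-vertices whose subtrees have order at most $b$ are exactly the $U$-branch roots, together with possibly some small $T_0$-children. The latter form a fixed multiset, since a small $T_0$-subtree receives no $U$-branches: receiving one would require some $p_i$ below it, whose subtree has more than $b$ vertices. Subtracting this fixed multiset recovers $\sum_i a_i$.

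The remaining point, which I expect to be the crux, is recognizing the $B$-vertices from the rooted type alone. I would handle it by the same order criterion. A vertex belongs to some $U$-branch if and only if some ancestor other than $q$ is the root of a subtree of order at most $b$ whose parent has subtree order greater than $b$, modulo the fixed small $T_0$ parts, and I would make this precise by induction on depth.
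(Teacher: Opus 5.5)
Your reduction matches the paper's proof. You apply Lemma~\ref{lem:multiroot} to each $U_i$ and relabel the roots by their depths along the $q$--$p_i$ path. You then use Lemma~\ref{lem:affine} and read off the profile sum using the threshold $b$ together with the hypothesis on $p_i$. The observations in your second-to-last paragraph are the right ones: $U$-branches have order at most $b$, vertices on a $q$--$p_i$ path have subtrees of order greater than $b$, and small $T_0$-subtrees receive no branches.

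The gap is the step you leave open. You pose it as recognising the $B$-vertices from the rooted type, and in general this cannot be done. Suppose some $s\in S_i$ has a $T_0$-child that is a leaf of $T_0$, and the forest hangs a one-vertex branch at $s$. Then a rooted automorphism swaps the two leaf children of $s$. Your sketched criterion (``modulo the fixed small $T_0$ parts'') does not separate such vertices either, so the proposed induction on depth has no precise statement to prove.

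The fix is that you never need $B$. Replace it by $P=\{v:|F_v|>b\}$, which is determined by the rooted type of $F$. Three facts follow from your own observations:
\begin{enumerate}
\item Every $U$-vertex has its descendant subtree inside its branch, so $P\subseteq B$.
\item Every vertex on a $q$--$p_i$ path lies in $P$, since it is an ancestor of $p_i$ in $T_0$. In particular every attachment vertex lies in $P$.
\item A $B$-vertex off these paths has no attachment vertex below it in $T_0$. So $F_v=(T_0)_v$, and whether it lies in $P$ does not depend on the forests.
\end{enumerate}
Now take the multiset of pairs (depth of $u$, rooted type of $F_u$), over all children $u\notin P$ of vertices of $P$. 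It consists of every $U$-branch together with a fixed family of small $T_0$-subtrees at fixed depths. So it equals a fixed vector plus $\sum_i a_i$, with attachment vertices relabelled by depth, and distinct sums give distinct rooted types.

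This is exactly the paper's record: delete every $v$ with $|F_v|>b$, then record each remaining component, rooted at its top vertex, with its depth. The individual pieces are never identified, only the sum.
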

\claimsof{lem:recovery}
\begin{proof}
Each component of a chosen forest meets $B$ at exactly one root, so
adding the forests to $T_0$ always gives a spanning tree. By
Lemma~\ref{lem:multiroot}, the set of branch profiles for each $U_i$ has
affine dimension at least $d$. It remains to show that their sum can be
recovered from the rooted tree.

Let $F$ be a resulting tree, rooted at $q$. Delete every vertex $v$ for
which $|F_v|>b$. Root each remaining component at its vertex closest to
$q$, and record its rooted type and the depth of its root in $F$.
This record depends only on the rooted isomorphism type of $F$.

\begin{claim}
Among the vertices of the fixed tree $T_0$, the set of deleted vertices
and all remaining rooted components are independent of the chosen local
forests.
\end{claim}
\begin{poc}
Let
\[
  P_*:=\bigcup_{i=1}^m V(T_0[q,p_i]),
\]
where $T_0[q,p_i]$ is the path from $q$ to $p_i$ in $T_0$.
Every vertex of $P_*$ is an ancestor of some $p_i$. Its descendant
subtree in $T_0$ therefore has more than $b$ vertices, so it is deleted
for every choice of forests. If $v\in B\setminus P_*$, no attachment
vertex lies in its descendant subtree in $T_0$: otherwise $v$ would lie
on a path from $q$ to some $p_i$. No chosen branch is therefore added
below $v$, and its descendant subtree stays unchanged. This proves the
claim.
\end{poc}

The fixed part of the record is now under control. We next check that the
variable branches survive the deletion exactly in the form recorded by their
local profiles.

\begin{claim}
Each branch of a chosen forest appears as one recorded component, and the
whole record is a fixed vector plus the sum of the local branch profiles,
with attachment vertices relabelled by their depths from $q$.
\end{claim}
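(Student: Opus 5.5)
The plan is to track, for a resulting tree $F$ rooted at $q$, exactly where each vertex of $U_1\cup\cdots\cup U_m$ ends up after the deletion step. Fix $i$ and a branch $W$ of the chosen $S_i$-rooted forest, attached to $s\in S_i$ through the vertex $w\in W$.

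\textbf{The branch survives as one component.} Since the forest has one root per component and $W\subseteq U_i$, the only edge of $F$ leaving $W$ is $sw$. I will check that $s$ lies between $q$ and $W$ in $F$, so that for every $v\in W$ the descendant subtree $F_v$ lies inside $W$. The vertex $s$ lies on the $q$--$p_i$ path of $T_0$, and the $q$--$s$ path of $T_0$ is still the $q$--$s$ path in $F$, because the added forests only hang new vertices below vertices of $B$. Hence $|F_v|\le |W|\le b$ for all $v\in W$, so no vertex of $W$ is deleted. On the other hand, $s\in P_*$, so $s$ is deleted by the first claim. Therefore $W$ is exactly one remaining component, rooted at $w$, the vertex of $W$ closest to $q$. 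Its recorded rooted type is the rooted type of the branch, and its recorded depth is $\operatorname{depth}_{T_0}(s)+1$.

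\textbf{Nothing else varies.} Every vertex of some $U_j$ lies in exactly one branch, so every vertex outside $B$ falls into a component of the kind just described. Every other remaining component consists of vertices of $B\setminus P_*$. By the first claim, these components are the same rooted trees for every choice, and the depths of their roots are unchanged because paths in $T_0$ are preserved. Consequently the record, viewed as a vector indexed by pairs (rooted type, depth), equals a fixed vector plus $\sum_i \phi_i(\pi_i)$. Here $\pi_i$ is the branch profile of the forest chosen on $U_i$, and $\phi_i$ relabels each root $s\in S_i$ by $\operatorname{depth}_{T_0}(s)+1$. Since $S_i$ lies on a single root path of $T_0$, its vertices have distinct depths, so $\phi_i$ is injective on labels. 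By the remark after Lemma~\ref{lem:multiroot}, each $\phi_i$ therefore preserves affine dimension. This proves the claim.

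\textbf{Concluding the lemma.} Apply Lemma~\ref{lem:affine} with $r=d$ to the sets $\phi_i(\text{profiles of }U_i)$, each of affine dimension at least $d$. This gives at least $\binom{m+d}{d}$ distinct records. Since the record depends only on the rooted type of $F$, there are at least that many rooted types.

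The step I expect to need the most care is confirming that depths of hubs and deletion status in $B$ are genuinely unaffected by the added forests. Both rest on the fact that added vertices only ever sit below vertices of $P_*$, which the first claim already secures.
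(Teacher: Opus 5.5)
Your proposal is correct and follows essentially the same route as the paper. Each branch has at most $b$ vertices, so none of its vertices is deleted, while its attachment vertex lies in $S_i\subseteq P_*$ and is always deleted; hence the branch is exactly one recorded component at depth $\operatorname{dist}_{T_0}(q,s)+1$, all other components come from the fixed part of $T_0$, and your extra checks ($F_v\subseteq W$, preserved depths in $B$, injectivity of the relabelling since $S_i$ lies on one path from $q$) just spell out what the paper states briefly.
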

\begin{poc}
A branch of a chosen forest has at most $b$ vertices, so none of its
vertices is deleted. Its attachment vertex belongs to some
$S_i\subseteq P_*$ and is deleted. Hence the whole branch is exactly one
of the recorded components.

Thus each label $(s,R)$ in a local profile is replaced by
$(\operatorname{dist}_{T_0}(q,s)+1,R)$. Distinct vertices of $S_i$ have
distinct distances from $q$, since they lie on one path starting at $q$.
All other recorded components come from the fixed tree and are independent
of the local choices. This proves the claim.
\end{poc}

The relabelling in the second claim preserves the affine dimension of
each profile set. Lemma~\ref{lem:affine} therefore gives at least
$\binom{m+d}{d}$ different records, and hence that many rooted types.
Only the sum of the local profiles is recovered; the individual pieces
need not be identified.
\end{proof}

We now prepare to apply the lemma. Keeping light subtrees at one
depth leaves their heavy siblings unchanged. These siblings provide
the fixed vertices needed below the attachment paths. We then choose
a root that either stays a centroid or has large degree in every
resulting tree.

\claimsof{lem:guarded}
\begin{proof}[Proof of Lemma~\ref{lem:guarded}]
Let $T$ be the given DFS tree, with original root $\rho$.

\begin{claim}
There are $M\ge N/H$ of the selected light subtrees whose roots have one
common depth. Writing $U_i=V(T_{u_i})$, with parent $p_i$, each $p_i$ has
a heavy-child subtree $W_i$ of order at least $b$ that stays fixed when
the $U_i$ are changed. Moreover, if
$S_i=N_G(U_i)\setminus U_i$ and $a_i$ is its shallowest vertex, then
$S_i$ lies on the ancestor chain from $a_i$ to $p_i$.
\end{claim}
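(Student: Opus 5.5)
The claim makes three assertions: pigeonholing to a common depth, the fixed heavy sibling, and the ancestor-chain property of $S_i$. I will prove them in that order.

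\emph{Common depth.} The $N$ selected light children have depths in $\{1,\dots,H\}$; the root is never a light child. By pigeonhole some depth is shared by at least $N/H$ of them, and we keep exactly these, say $u_1,\dots,u_M$. Their descendant subtrees are pairwise disjoint: two vertices at the same depth cannot be ancestors of one another. So $U_i=V(T_{u_i})$ are disjoint sets, each of order $b$. Each $G[U_i]$ is connected, since it contains the tree $T_{u_i}$.

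\emph{Fixed heavy sibling.} Let $p_i$ be the parent of $u_i$. Since $u_i$ is light, $p_i$ has a heavy child $h_i\ne u_i$ with $|T_{h_i}|\ge|T_{u_i}|=b$; set $W_i=V(T_{h_i})$. I need $W_i$ to be disjoint from every $U_j$. Since $h_i$ is at the same depth as all $u_j$, the set $W_i$ is disjoint from each $U_j$ unless $h_i=u_j$ for some $j$. That is impossible, because each $u_j$ is a light child and $h_i$ is heavy. Hence $W_i$ lies in $B:=V(T)\setminus\bigcup_j U_j$. When we keep $T$ on $B$ and change only the trees inside the $U_j$, the subtree $T[W_i]$ is untouched. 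Consequently, in $T_0=T[B]$ the vertex $p_i$ has descendant subtree of order at least $1+b>b$, as Lemma~\ref{lem:recovery} will later require.

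\emph{Ancestor chain.} This is where the DFS property enters. Every edge of $G$ joins an ancestor–descendant pair of $T$. A vertex $s\in S_i$ is adjacent to some $x\in U_i$ but $s\notin U_i$, so $s$ is not a descendant of $u_i$. Hence $s$ is a strict ancestor of $x$ outside $T_{u_i}$, i.e.\ an ancestor of $p_i$, possibly $p_i$ itself. So $S_i$ lies on the $\rho$–$p_i$ path, which is a chain, and its shallowest vertex $a_i$ is well defined. Thus $S_i$ lies on the chain from $a_i$ to $p_i$. Also $p_i\in S_i$, because the tree edge $p_iu_i$ is an edge of $G$. In addition, $S_i\subseteq B$: an ancestor of $p_i$ has depth less than the common depth, so it lies in no $U_j$. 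Therefore $N_G(U_i)\setminus U_i\subseteq B$, as required.

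None of these steps is a real obstacle. The point needing the most care is the disjointness of $W_i$ from the other $U_j$, which relies on the common depth together with the heavy/light distinction. Keeping all subtrees at one depth is exactly what guarantees that the heavy siblings are never among the modified pieces.
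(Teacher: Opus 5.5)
Your proposal is correct and follows the paper's argument essentially step by step. You pigeonhole the light children by depth, use the common depth together with the heavy/light distinction to show that each heavy-sibling subtree $W_i$ avoids every kept $U_j$, and invoke the DFS ancestor--descendant property to place $S_i$ (which contains $p_i$) on the strict ancestor chain of $u_i$. Your extra checks, namely disjointness of the $U_j$, $S_i\subseteq B$, and the bound $|W_i|+1>b$ below $p_i$ in the original rooting, are details the paper leaves implicit or handles in its subsequent claims.
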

\begin{poc}
Keep $M\ge N/H$ light subtrees whose roots have the same depth. Let $W_i$
be the vertex set of the subtree at the heavy child of $p_i$. Then
$|W_i|\ge b$. No $W_i$ meets a kept $U_j$: their roots are at the same
depth, but the root of $W_i$ is a heavy child and $u_j$ is a light child.
Thus every $W_i$ stays fixed.

The DFS property puts $S_i$ on the strict ancestor chain of $u_i$, with
$p_i\in S_i$. In particular, no vertex of $S_i$ lies in a kept subtree,
and $S_i$ lies between $a_i$ and $p_i$ on that ancestor chain.
\end{poc}

Choose a centroid $z$ of $T$, and let $P$ be the $\rho$--$z$ path. Discard
the at most one kept subtree containing $z$. Every other kept subtree
avoids $P$.

\begin{claim}
If at least half of the remaining candidates satisfy $a_i\notin V(P)$,
then
\[
  \tauiso(G)\ge c_d(N/H)^d.
\]
\end{claim}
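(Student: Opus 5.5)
We would root the resulting trees at the centroid $z$ and apply Lemma~\ref{lem:recovery} with $q=z$. Restrict to the remaining candidates with $a_i\notin V(P)$; there are at least $(M-1)/2\ge N/(3H)$ of them for large $N/H$. Put $B=V(G)\setminus\bigcup_i U_i$ and $T_0=T[B]$, which is a tree since each $U_i$ is a full descendant subtree. Every vertex of $U_i$ has degree at least $d$ in $G$, and $S_i\subseteq B$.

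\emph{Step 1: the hypotheses of Lemma~\ref{lem:recovery} at $q=z$.} Since $a_i\notin P$, the ancestor chain from $a_i$ to $p_i$ leaves the $\rho$--$z$ path before $a_i$. Hence in $T_0$ rerooted at $z$, the $z$--$p_i$ path runs back along $P$ to the branching point and then down through $a_i,\dots,p_i$. By the third part of the first claim, $S_i$ lies on this path, and it ends at $p_i\in S_i$. The descendant subtree of $p_i$ in $T_0$ rooted at $z$ contains $p_i$ and its heavy-child subtree $W_i$, which is fixed and has order at least $b$. It therefore has more than $b$ vertices. This holds because $z$ is not below $p_i$, as $p_i$ is not on $P$: indeed $a_i\notin P$ and $a_i$ is an ancestor of $p_i$.

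\emph{Step 2: counting.} Lemma~\ref{lem:recovery} gives at least $\binom{m+d}{d}\ge (m/d)^d$ types rooted at $z$, with $m\ge N/(3H)$.

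\emph{Step 3: forgetting the root.} This is the main obstacle. The local changes replace the forests inside the $U_i$, and each $|U_i|=b\le |W_i|$. To remove the factor $n$, we want $z$ to stay a centroid in every resulting tree. If that holds, each unrooted type has at most two centroids, so it comes from at most two rooted types.

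We would argue that every component of $F-z$ differs from the corresponding component of $T-z$ only by moving vertices of the $U_i$ among attachment points in $S_i$. By Step~1, all of $S_i$ lies in the component of $T-z$ containing $p_i$, together with the part of $P$ toward $\rho$. To see this, note that the $z$--$p_i$ path leaves $z$ along $P$ toward $\rho$, unless $z$ itself lies on it. The vertex $z$ is in $S_i$ only when $z$ lies on the chain from $a_i$ to $p_i$, which would force $z$ to be an ancestor of $p_i$ off $P$. That is impossible, because the ancestors of $z$ are exactly $P$.

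Therefore all of $U_i$ and $S_i$ lie in a single component of $F-z$, the one containing $\rho$'s side or the component through which $p_i$ hangs. Component orders are unchanged, so $z$ remains a centroid of every $F$. Each unrooted type thus arises from at most two rooted types, giving $\tauiso(G)\ge \tfrac12 (N/(3dH))^d\ge c_d(N/H)^d$.

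The delicate point is checking that no $S_i$ meets two different components of $T-z$. If we cannot rule this out cleanly, we would instead discard such $i$, using that only candidates with $a_i$ above the branch point relative to $z$ can do so.
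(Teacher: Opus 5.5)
Your proposal is correct and follows the paper's proof essentially step for step. You apply Lemma~\ref{lem:recovery} rooted at the centroid $z$, use $a_i\notin V(P)$ to put $S_i$ on the $z$--$p_i$ path with the fixed heavy subtree $W_i$ still below $p_i$, and then note that each $U_i\cup S_i$ lies in one component of $T-z$, so $z$ stays a centroid and forgetting the root costs only a factor of two. Your closing fallback is unnecessary: $S_i$ lies on the $z$--$p_i$ path with $z\notin S_i$, and that path with $z$ removed is connected, so $S_i$ already lies in a single component of $T-z$.
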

\begin{poc}
Vary these $m\ge(M-1)/2$ subtrees, and let $T_0$ be the tree obtained
from $T$ by deleting their vertex sets, rooted at $z$. For each such $i$,
the path from $z$ to $p_i$ follows $P$ up to its meeting point with the
ancestor chain of $p_i$, and then follows that chain down to $p_i$.
Since $a_i$ lies outside $P$, the whole set $S_i$, lying between $a_i$
and $p_i$, lies on the $z$--$p_i$ path.

Also, $p_i\notin V(P)$, so $z$ is outside its original descendant
subtree. The vertices of $W_i$ remain below $p_i$ after rooting at $z$,
and none has been deleted from $T_0$. Hence the descendant subtree of
$T_0$ at $p_i$ has at least $|W_i|+1>b$ vertices. Lemma~\ref{lem:recovery}
gives $\binom{m+d}{d}$ rooted types.

Each variable subtree and all its attachment vertices lie in one
component of $T-z$. Every replacement keeps the vertex sets of these
components unchanged, so $z$ stays a centroid. Since a tree has at most
two centroids, forgetting the root loses at most a factor of two. Thus
\begin{align*}
  \tauiso(G)\ge\frac12\binom{m+d}{d}\ge c_d(N/H)^d.
  \tag*{\qedhere}
\end{align*}
\end{poc}

If this first case does not occur, many shallowest attachment vertices must
lie on the centroid path. We use this concentration to force a high-degree
root and obtain the second bound.

\begin{claim}
If fewer than half of the remaining candidates satisfy $a_i\notin V(P)$,
then
\[
  \tauiso(G)\ge c_d\frac{(N/H^2)^{d+1}}{n}.
\]
\end{claim}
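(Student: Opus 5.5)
Pigeonhole on the centroid path $P$ should produce one vertex $a$ that is the shallowest attachment vertex of many candidates. I would root there and run Lemma~\ref{lem:recovery}, and pay for forgetting the root using the large degree of $a$, in the spirit of the second claim in the proof of Lemma~\ref{lem:pool}.

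\emph{Step 1: pigeonhole.} At least $(M-1)/2$ of the remaining candidates have $a_i\in V(P)$, and $|V(P)|\le H+1$. So some vertex $a\in V(P)$ equals $a_i$ for at least
\[
m\ge \frac{M-1}{2(H+1)}\ge c\,\frac{N}{H^2}
\]
candidates; here we use $M\ge N/H$ and that $N/H^2$ is large.

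\emph{Step 2: split the pieces.} Every such $U_i$ lies in the original descendant subtree of $a$. By the first claim, $S_i$ lies on the ancestor chain from $a$ to $p_i$, which is the $a$--$p_i$ path of $T$ rerooted at $a$. Also $a\in S_i$, so some vertex of $U_i$ is adjacent to $a$.

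Split these $m$ pieces into two halves. On each piece of the first half, fix once and for all an $S_i$-rooted spanning forest with at least one branch attached at $a$. Such a forest exists: take a spanning tree of $G[U_i]$ and join it to $a$ through a neighbour of $a$. Put these fixed forests, together with $T$ minus all $m$ pieces, into a tree $T_0$ rooted at $a$. The second half, of $m'\ge m/2$ pieces, is varied freely.

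\emph{Step 3: check Lemma~\ref{lem:recovery} and count rooted types.} For a varied piece, $S_i$ lies on the $a$--$p_i$ path of $T_0$. The descendant subtree of $T_0$ at $p_i$ contains the untouched heavy subtree $W_i$, by the same argument as in the first case. Hence it has more than $b$ vertices. Lemma~\ref{lem:recovery} then gives $\binom{m'+d}{d}\ge c_d m^d$ types rooted at $a$. In every such tree, $a$ has degree at least $m/2$.

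\emph{Step 4: forget the root.} Let $Q$ be the set of vertices of degree at least $m/2$ in a given unrooted tree. The degree sum gives $|Q|\le 4n/m$. The root $a$ must be one of these vertices, so each unrooted type arises from at most $4n/m$ rooted ones. Therefore
\[
\tauiso(G)\ge \frac{m}{4n}\,c_d m^d\ge c_d'\frac{(N/H^2)^{d+1}}{n}.
\]

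\emph{Main obstacle.} The delicate point is Step 3: the recovery lemma must still apply after the first-half pieces have been absorbed into $T_0$. Concretely, one must check that their fixed forests do not break the hypotheses at the varied pieces. Those forests hang below $a$ or below vertices of their own chains and are disjoint from the varied $U_i$. So the conditions that $S_i$ lies on the $a$--$p_i$ path and that the descendant subtree at $p_i$ is large should persist, but I would verify this explicitly.
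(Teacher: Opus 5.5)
Your proposal is correct and is essentially the paper's argument: pigeonhole on the centroid path gives a common shallowest attachment vertex $a$; a reserved portion of those pieces is reattached at $a$ as single branches, so that $\deg(a)$ is large in every resulting tree; Lemma~\ref{lem:recovery} is applied to the remaining pieces with $T_0$ rooted at $a$; and the root is forgotten using the bound on the number of high-degree vertices. The only differences are cosmetic: the paper splits into reserved and variable sets of size at least $K/3$ rather than halves, and it discharges the obstacle you flag by noting that the reserved pieces avoid every relevant ancestor chain and every $W_i$.
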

\begin{poc}
At least half of the candidates now have $a_i\in V(P)$. Since $|P|\le H$,
some $a\in V(P)$ equals $a_i$ for at least
\[
  K\ge\frac{M-1}{2H}
\]
candidates. Split them into a reserved set and a variable set of sizes
$r,m\ge K/3$, which is possible for large $K$.

Construct the fixed tree $T_0$ as follows. Start with $T$ and delete the
variable subtrees. For each reserved subtree, remove its original
attachment edge, choose a fixed internal spanning tree, and attach it once
at $a$ using an edge of $G$. Such an edge exists because its shallowest
attachment vertex is $a$. These changes preserve a tree; keep this tree
fixed and root it at $a$.

The reserved subtrees avoid every relevant ancestor chain and every $W_i$.
Reattaching them therefore changes neither the path from $a$ to a variable
$p_i$ nor the subtree on $W_i$ and its edge to $p_i$. Since $a=a_i$, all
of $S_i$ lies on the unchanged $a$--$p_i$ path, while $W_i$ still lies
below $p_i$. Hence Lemma~\ref{lem:recovery} again gives
$\binom{m+d}{d}$ rooted types.

The reserved subtrees ensure that $\deg(a)\ge r$ in every resulting tree.
An $n$-vertex tree has at most $2(n-1)/r$ vertices of degree at least $r$.
Forgetting the root therefore gives
\begin{align*}
  \tauiso(G)
  &\ge\frac{r}{2(n-1)}\binom{m+d}{d}
   \ge c_d\frac{K^{d+1}}{n}
   \ge c'_d\frac{(N/H^2)^{d+1}}{n}.
  \tag*{\qedhere}
\end{align*}
\end{poc}

The two claims cover all possibilities, and so one of the two bounds in
the statement of the lemma always holds.
\end{proof}

\subsubsection{Finishing Proposition~\ref{prop:large-class}}
We can now finish the proof of Proposition~\ref{prop:large-class}.
Assuming that every class is small, Lemma~\ref{lem:many-light} gives
almost linearly many equal-size pieces, up to powers of the DFS height,
and Lemma~\ref{lem:guarded} turns them into $n^d$ types up to logarithmic
factors.

\begin{proof}[Proof of Proposition~\ref{prop:large-class}]
Fix $d,C$ and suppose $\tauiso(G)\le Cn^{d-1}$.
By Proposition~\ref{prop:path}, every DFS tree has height
 $H=O_{d,C}((\log n)^4).$
Suppose no degree-$d$ common-neighbourhood class has at least $n/2$
vertices. Lemma~\ref{lem:many-light} supplies $N$ light subtrees
of one common order $b\ge2$, where
 $N\ge c_{d,C}\frac{n}{H^{2d+5}}.$
In particular, $N/H^2\to\infty$. Either alternative of
Lemma~\ref{lem:guarded} gives
\[
 \tauiso(G)\ge\frac{c'_{d,C}n^d}{(\log n)^{O_d(1)}},
\]
contrary to $\tauiso(G)\le Cn^{d-1}$ for sufficiently large $n$.
Thus some $X(S)$ with $|S|=d$ has at least $n/2$ vertices, as required.
\end{proof}

\section{The extremal comparison}\label{sec:extremality}

By stability, every possible minimizer has a degree-$d$
common-neighbourhood class with only a bounded remainder. This section
compares all such graphs. The first proposition gives the bipartite
baseline; the second is the strict comparison needed for uniqueness.

\begin{proposition}\label{prop:coefficient}
For every fixed integer $d\ge2$, as $n\to\infty$,
\[
 \tauiso(K_{d,n-d})=A_dn^{d-1}+O_d(n^{d-2}).
\]
\end{proposition}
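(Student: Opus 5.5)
We will prove Proposition~\ref{prop:coefficient} by a direct structural count. Write $D$ for the $d$-side (the hubs) and $L$ for the $(n-d)$-side of $K_{d,n-d}$.

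\textbf{Decomposition into core and leaves.} In a spanning tree $T$, every vertex of $L$ has degree at least one. Counting edges gives
\[
\sum_{v\in L}(\deg_T v-1)=(n-1)-(n-d)=d-1 .
\]
So at most $d-1$ vertices of $L$ have degree at least two; call these \emph{connectors}, and let $k$ be their number, so $1\le k\le d-1$ when $d\ge2$. Deleting the leaves in $L$ leaves the core: a bipartite tree on $D$ together with the $k$ connectors, in which every connector has degree at least two. Equivalently, the core is a hypertree on the vertex set $D$ whose $k$ hyperedges have sizes $c_j\ge2$ with $\sum_j(c_j-1)=d-1$. The tree $T$ is then determined by its core together with a leaf vector $(\ell_s)_{s\in D}$ of nonnegative integers summing to $N_k:=n-d-k$. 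The isomorphism type of $T$ is exactly the orbit of the pair (core with labelled hubs, leaf vector) under permutations of $D$, where cores are taken up to relabelling of the connectors.

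\textbf{Counting cores.} We will count hub-labelled cores with $k$ connectors using the degree-sequence formula for spanning trees of $K_{p,q}$. The number of spanning trees of $K_{d,k}$ with prescribed degrees $b_s$ on $D$ and $c_j$ on the connector side is
\[
\binom{k-1}{(b_s-1)_s}\binom{d-1}{(c_j-1)_j},
\]
obtained by a bipartite Prüfer code or by the Matrix--Tree theorem. Summing over the $b_s$ gives the factor $d^{k-1}$. Summing over $c_j\ge2$ amounts to counting surjections $[d-1]\to[k]$, which gives $k!\,S(d-1,k)$. Dividing by $k!$ to unlabel the connectors (the hyperedges of a hypertree are distinct sets, so no symmetry is lost) yields $d^{k-1}S(d-1,k)$ hub-labelled cores with $k$ connectors.

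\textbf{Counting types and the error term.} Call a leaf vector \emph{generic} if its entries are pairwise distinct. For a generic vector, any isomorphism between two such trees must match hubs by leaf count, since for large $n$ the hubs are the only high-degree vertices. Hence the types with generic leaf vectors are in bijection with $S_d$-orbits of (labelled core, leaf vector) pairs, and these orbits are free. The number of compositions of $N_k$ into $d$ nonnegative parts is $\binom{N_k+d-1}{d-1}=n^{d-1}/(d-1)!+O_d(n^{d-2})$. The non-generic vectors, those with some $\ell_s=\ell_{s'}$, number $O_d(n^{d-2})$, and each contributes at most a bounded number of types, since there are boundedly many cores. So the generic types number
\[
\sum_{k=1}^{d-1}\frac{d^{k-1}S(d-1,k)}{d!}\cdot\frac{n^{d-1}}{(d-1)!}+O_d(n^{d-2}),
\]
which is $A_dn^{d-1}+O_d(n^{d-2})$.

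\textbf{Main obstacle.} The delicate step is the recovery claim that a type determines the hub labelling up to an element of $S_d$. We will handle it by noting that for large $n$ and a generic vector, the hubs are exactly the vertices of degree at least $(n-d)/d-O(1)$ or adjacent to many leaves. The only exception is hubs with small leaf counts, and these cases fall into an $O(n^{d-2})$ set, such as vectors with an entry below a fixed constant. Any such exceptional configurations are absorbed into the error term.
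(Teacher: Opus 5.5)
Your proposal is correct and follows essentially the paper's route: the core-plus-leaf-vector decomposition, the bipartite Prüfer count $d^{k-1}k!\,S(d-1,k)$ of labelled cores, stars and bars for the leaf vectors, and free $S_d$-orbits for vectors with distinct entries, with the non-generic vectors absorbed into $O_d(n^{d-2})$. The one simplification the paper has is that it recognises the hubs intrinsically as the smaller class of the tree's bipartition (valid once $n>2d$), so your degree-based recovery of the hubs and the extra exclusion of vectors with small entries are not needed.
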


To prove uniqueness, we need more than the count for the example: the other
graphs must give an increase larger than the error term. The next
proposition gives exactly this comparison, uniformly when the remainder is
bounded.

\begin{proposition}\label{prop:gap}
For every fixed integer $d\ge3$ and $r_0\ge0$, there is
$n_0=n_0(d,r_0)$ such that the following holds. Let $G$ be a connected
$n$-vertex graph with $n\ge n_0$ and $\delta(G)\ge d$. Suppose that
$|S|=d$, $X=X(S)$, and $|V(G)\setminus(S\cup X)|\le r_0$.
If $G\not\cong K_{d,n-d}$, then
\[
  \tauiso(G)>\tauiso(K_{d,n-d}).
\]
\end{proposition}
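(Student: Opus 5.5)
We reduce everything to counting trees in which almost every vertex of $X$ is a leaf, and we compare these counts with the corresponding counts in $K_{d,n-d}$, whose main term is computed in Proposition~\ref{prop:coefficient}.

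Write $m=|X|$ and $r=n-d-m\le r_0$. Call a spanning tree of $G$ \emph{standard} if at most $B$ vertices of $X$ are non-leaves, where $B$ is a large constant depending on $d$. Delete the leaves in $X$ while keeping $S$; the result is a bounded \emph{core} tree $K$ on $S$, on the remainder $R$, and on a bounded number of non-leaf vertices of $X$. Let $\ell=(\ell_s)_{s\in S}$ be the vector of leaf counts at the hubs. For $n$ large, a tree type whose hub leaf counts are all at least $n/(2d)$ determines the set of hubs as the vertices of huge degree. It also determines the core up to the symmetries that permute hubs together with their leaf counts. Hence the number of such types is
\[
 \sum_{K}\#\{\ell:\ell\text{ has the correct sum},\ \ell\text{ modulo }\operatorname{Aut}\}=\sum_K\frac{n^{d-1}}{(d-1)!\,|\mathrm{Aut}(K)|}+O(n^{d-2}),
\]
where $K$ ranges over unlabelled cores with $d$ distinguished, unordered hubs. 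This is exactly how I expect Proposition~\ref{prop:coefficient} to arise for $K_{d,n-d}$, with $A_d$ equal to the sum over bipartite cores. The point is that the coefficient of $n^{d-1}$ for $G$ is a sum over the cores realisable in $G$. The leading coefficient can be computed for the $G$-part and the $K_{d,n-d}$-part in the same way, with each type counted once.

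The key step is monotonicity of the core set. Any core of $K_{d,n-d}$ has bounded size, so it uses only boundedly many vertices of $X$. I embed it in $G$ using $S$ and vertices of $X$. The remainder $R$ must still be spanned, so I attach $R$ by a fixed $S$-rooted forest, which exists by connectivity of $G$. This produces a modified core $K'$. I will therefore split into the two non-isomorphic cases described in the overview.

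\emph{Case $r\ge1$.} I map each $K_{d,n-d}$-type with large leaf counts injectively to a $G$-type by grafting a fixed rooted forest on $R$ onto a fixed hub and recording it. Because $R$ lies in a bounded piece attached to hubs, I can recover the original type by deleting the components of size $\le r_0$ hanging off hubs that are not leaves or $X$-paths. I must be careful here, so to be safe I will use $r+1$ by reducing $m$. $G$ has $m=n-d-r$ vertices in $X$, so the comparable $K_{d,m}$-pattern gives roughly $A_d m^{d-1}$ types, which loses $O(n^{d-2})$. Then I need a second family of order $\Theta(n^{d-1})$ disjoint from the first. I take it from the rooted types of $J$, exactly as in Lemma~\ref{lem:pool}: $q(J,\rho)\ge r/2+1\ge 2$ by the computation in the proof of Proposition~\ref{prop:amplify}. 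Two distinct rooted $J$-types each give about $A'_d n^{d-1}$ core-plus-leaf types, distinguishable by the recorded non-$X$ components. This gives coefficient at least $A_d+c$ with $c>0$.

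\emph{Case $r=0$.} Then $G$ is $K_{d,n-d}$ plus a nonempty edge set inside $S$. Every tree of $K_{d,n-d}$ is a tree of $G$. In addition, using an edge $s_1s_2$ gives new cores in which two hubs are adjacent. Such a core has a hub--hub edge and so cannot be bipartite with all hubs on one side; because hubs are identifiable as the high-degree vertices, these types are new. They number $\Theta(n^{d-1})$.

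The main obstacle is the bookkeeping in the case $r\ge1$. I must show that the $G$-types counted from the bipartite-like cores number at least $A_dn^{d-1}-O(n^{d-2})$, so that the bounded shift in $m$ costs only $O(n^{d-2})$. I must also show that the extra family is genuinely disjoint from these. For both, I plan to use the canonical decomposition: hubs are the $d$ vertices of degree $\ge n/(2d)$, and components of the tree minus hubs are bounded and have a recorded type. Types whose components contain vertices of $R$ in a recognisable, non-bipartite configuration (by Lemma~\ref{lem:surplus}, at least two distinct ones) form the extra family. The strict inequality then follows for $n\ge n_0(d,r_0)$, since $A_d+c$ beats $A_d+O(1/n)$.
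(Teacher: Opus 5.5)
Your overall plan is the paper's. For $r>0$ you take two fixed $S$-rooted remainder forests whose contractions have different rooted types, and complete each with bipartite cores and leaves; for $r=0$ you use a core containing an edge inside $S$. Your case $r=0$ is fine. It is even slightly simpler than the paper's, since $K_{d,n-d}$ is then a spanning subgraph of $G$ and one new type suffices.

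The gap is in the case $r\ge1$, at the step you yourself call the main obstacle. You need a family of at least $A_dn^{d-1}-O(n^{d-2})$ types, and the tools you propose do not give it. Identifying the hubs as the vertices of degree at least $n/(2d)$ forces every hub to carry about $n/(2d)$ leaves. That keeps only about a $2^{-(d-1)}$ proportion of the leaf vectors, so your first family has roughly $A_d(n/2)^{d-1}$ types, and the comparison with $\tauiso(K_{d,n-d})=A_dn^{d-1}+O(n^{d-2})$ is lost. Your displayed count silently uses all leaf vectors instead. The missing observation is that every vertex outside $S$ has degree at most $d+r_0$ in $G$: vertices of $X$ have degree $d$, and remainder vertices have all their neighbours in $S\cup R$. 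So you only need a constant number $L>d+r_0$ of leaves at each hub. Then the hubs are exactly the vertices of degree greater than $d+r_0$, and only $O(n^{d-2})$ leaf vectors are discarded.

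Your proposed injection from $K_{d,m}$-types is also not sound. Grafting a forest onto an unlabelled type requires choosing which hub receives which component. Your recovery rule also fails: a one-vertex remainder branch is a leaf at a hub, indistinguishable from a leaf in $X$, and there are no ``$X$-paths'' since $X$ is independent. For instance, take $d=3$, a star core, sorted representatives, and one-vertex remainder branches at the first two hubs. Then the leaf vectors $(a,b,b)$ and $(a,b-1,b+1)$ with $a<b$ both yield the tree whose hubs carry $a+1$, $b$, $b+1$ leaves.

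The paper avoids injections altogether. It fixes one labelled remainder forest and counts labelled pairs: one of the $d^{k-1}k!S(d-1,k)$ labelled bipartite cores, together with a leaf vector whose entries are all at least $L$. Each resulting type arises from at most $d!k!$ pairs. The reason is that the hubs are determined by degree, and the smallest subtree containing them is exactly the core $F$, because each remainder component hangs from a single hub. Hence an isomorphism induces permutations of $S$ and of $V(F)\setminus S$, and these determine the leaf vector. Dividing gives $A_dn^{d-1}-O(n^{d-2})$ types for each forest.

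The same singleton issue must be handled in your disjointness step, and the paper does it by counting. Branches off $F$ with at least two vertices are remainder branches. The number of one-vertex remainder branches is $r$ minus their total order. Identifying the hubs then recovers the rooted $J$-type, so the two families are disjoint and each has coefficient $A_d$.
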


Both proofs describe a spanning tree by a small core joining the
$d$ common neighbours and the branches attached to them. We first
count all bipartite cores, then use a remainder or an extra edge to
produce additional types that do not overlap with the first family.

\subsection{The complete bipartite graph}

Johnson and Nochumson~\cite{johnson} give general bounds for complete
bipartite graphs. Here the smaller part has fixed order, so deleting
the leaves on the larger side leaves only finitely many possible
cores. We count labelled cores and leaf vectors first, then account
for the different labellings that give the same tree type.

\claimsof{prop:coefficient}
\begin{proof}[Proof of Proposition~\ref{prop:coefficient}]
Put $m=n-d>d$, and let $S$ be the part of order $d$.

\begin{claim}
For every spanning tree, deleting all leaves in the part of order $m$
while keeping all vertices of $S$ leaves a core $F$ with
$1\le k:=|F-S|\le d-1$.
\end{claim}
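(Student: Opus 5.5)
We need to prove the claim: for every spanning tree $T$ of $K_{d,m}$ with $m>d$, deleting all leaves lying in the part $Y$ of order $m$ (keeping all of $S$) leaves a tree $F$ with $1\le k:=|F-S|\le d-1$.

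The plan is to first check that $F$ is a tree and then bound $k$ from both sides.

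\emph{$F$ is a tree.} Deleting leaves from a tree leaves a tree. We delete all the leaves in $Y$ simultaneously, but these are pairwise nonadjacent, since $Y$ is independent. So $F$ is obtained by removing a set of leaves one at a time, and each removal keeps the graph a tree. Hence $F$ is a tree containing all of $S$.

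\emph{Lower bound $k\ge1$.} Since $d\ge2$, $F$ contains at least two vertices of $S$. Because $F$ is a connected subgraph of the bipartite graph $K_{d,m}$, any path in $F$ between two vertices of $S$ passes through a vertex of $Y$. Therefore $F$ contains at least one vertex of $Y$, so $k\ge1$.

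\emph{Upper bound $k\le d-1$.} Every vertex of $F-S$ is a vertex of $Y$ that was not a leaf of $T$, so it has degree at least $2$ in $T$. All its $T$-neighbours lie in $S\subseteq V(F)$, so it also has degree at least $2$ in $F$. Since $F$ is bipartite with parts $S$ and $F-S$, we count edges of $F$ from the $Y$-side:
\[
  |S|+k-1=e(F)\ge 2k .
\]
This gives $k\le d-1$.

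No step here is a real obstacle. The only point needing care is the simultaneous deletion of leaves, and that is handled by the independence of $Y$. The hypothesis $m>d$ is not needed for the claim itself.
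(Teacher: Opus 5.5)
Your proof is correct and is essentially the paper's argument. The paper uses the identity $\sum_{v\notin S}(\deg_T(v)-1)=d-1$ (every edge of $T$ has exactly one endpoint outside $S$), with each vertex of $F-S$ contributing at least one. This is the same edge count as your $d+k-1=e(F)\ge 2k$, only carried out in $T$ rather than in $F$, and in both proofs the lower bound $k\ge1$ comes from connectivity. Your remark on $m>d$ is also right for the numerical bound: the paper uses it only to note that $S$ is the smaller bipartition class, so that the core is determined by the unlabelled tree.
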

\begin{poc}
The bipartition of a spanning tree is determined by the tree, and $S$ is
its smaller class. Every vertex of $F-S$ has degree at least two. Since
\[
 \sum_{v\notin S}(\deg_T(v)-1)=d-1,
\]
the number of vertices of $F-S$ is at most $d-1$, and it is at least one
because the tree is connected.
\end{poc}

Thus only the finitely many values $1\le k\le d-1$ can occur. For a fixed
$k$, we now count the labelled cores and the possible leaf multiplicities.

\begin{claim}
For fixed $k$, the number of labelled cores is
\[
 d^{k-1}k!S(d-1,k),
\]
and the number of possible leaf vectors is
\[
 \binom{m-k+d-1}{d-1}
 =\frac{m^{d-1}}{(d-1)!}+O_d(m^{d-2}).
\]
\end{claim}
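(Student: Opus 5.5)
The plan is to identify a labelled core with a spanning tree of $K_{d,k}$ in which every vertex of the $k$-side has degree at least two. The labels are the $d$ hub labels on $S$ together with $k$ distinct labels on the core vertices of the large side. We count such trees by their degree sequences. The leaf vectors are handled separately by stars and bars.

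\emph{The core is such a tree.} Deleting leaves from a tree keeps it connected, so $F$ is a tree. Since $d\ge2$ and $F$ contains $S$ together with $k\ge1$ further vertices, every hub has a neighbour in $F-S$. Hence $F$ is a spanning tree of the complete bipartite graph on $S$ and the $k$ core vertices $K$. Each vertex of $K$ has degree $e_j\ge2$ in $F$. A degree count for the tree $F$ gives
\[
\sum_{j=1}^k (e_j-1)=d-1,
\]
which is the relation used in the first claim, now read inside $F$.

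\emph{Counting labelled cores.} I will use the bipartite degree-sequence refinement of the formula $\tau(K_{p,q})=p^{q-1}q^{p-1}$~\cite{moon}. Take the $K$-side degrees to be $e_1,\dots,e_k$ and the $S$-side degrees to be $f_1,\dots,f_d$. The number of spanning trees of $K_{d,k}$ with these degrees is
\[
\binom{d-1}{e_1-1,\dots,e_k-1}\binom{k-1}{f_1-1,\dots,f_d-1}.
\]
If needed, I would justify this by the bipartite Prüfer code. Repeatedly delete the smallest-labelled leaf and record its neighbour. This produces one word of length $d-1$ over $K$ and one word of length $k-1$ over $S$. Each vertex appears in the word over the opposite side exactly (degree $-1$) times, and the map from trees to pairs of words is a bijection. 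The two sums then separate:
\begin{itemize}
\item Summing over all $f$ gives $d^{k-1}$, by the multinomial theorem.
\item Summing over $e_j\ge2$, that is $e_j-1\ge1$, with $\sum_j(e_j-1)=d-1$, counts the surjections from a $(d-1)$-set onto a $k$-set. That number is $k!\,S(d-1,k)$.
\end{itemize}
Their product is $d^{k-1}k!\,S(d-1,k)$, as claimed.

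\emph{Leaf vectors.} Fix the core. The remaining $m-k$ vertices of the large class are leaves, and each is attached to exactly one hub. Once the core is fixed, a tree type records only how many leaves sit at each hub. A leaf vector is therefore a vector of $d$ nonnegative integers summing to $m-k$, and some hubs may receive none. There are $\binom{m-k+d-1}{d-1}$ such vectors. For fixed $d$ and $k$ this is a polynomial in $m$ of degree $d-1$ with leading coefficient $1/(d-1)!$, so it equals $m^{d-1}/(d-1)!+O_d(m^{d-2})$.

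\emph{Main obstacle.} The main obstacle is the degree-refined bipartite tree formula. One must check carefully that the bipartite Prüfer code is a bijection, or else cite it. The rest is the surjection identity and stars and bars.
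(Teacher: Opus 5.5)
Your proposal is correct and follows the paper's argument. The paper also uses the bipartite Pr\"ufer encoding: the length-$(d-1)$ word over the $k$ core vertices must use every letter, which gives $k!\,S(d-1,k)$, and the length-$(k-1)$ word over $S$ is arbitrary, which gives $d^{k-1}$; leaf vectors are then counted by stars and bars. Your detour through the degree-refined multinomial formula, summed over degree sequences, is only a repackaging of that direct word count.
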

\begin{poc}
Label the $d$ vertices of $S$ and the $k$ vertices of $F-S$. The
bipartite Pr\"ufer encoding gives
\begin{equation}\label{eq:labeled-cores}
 d^{k-1}k!S(d-1,k)
\end{equation}
cores: the word on the $k$-part has length $d-1$ and uses every letter,
while the word on the $d$-part has length $k-1$.

The remaining leaves are described by a nonnegative vector
$x=(x_1,\ldots,x_d)$ with $\sum_i x_i=m-k$, which gives the stated
stars-and-bars count.
\end{poc}

This counts labelled objects. To pass to unlabelled tree types, it remains to
understand when different labellings can represent the same tree.

\begin{claim}
All but $O_d(m^{d-2})$ leaf vectors give tree types with exactly $d!k!$
labelled representations $(F,x)$.
\end{claim}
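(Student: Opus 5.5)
We study the map from labelled pairs $(F,x)$ to unlabelled trees through the group $\mathrm{Sym}(S)\times\mathrm{Sym}(F-S)$ of order $d!k!$, acting by relabelling. A tree type $T$ arising with core size $k$ has as many labelled representations as the orbit of any representation, namely $d!k!/|\mathrm{Stab}(F,x)|$. The stabilizer is isomorphic to the automorphism group of $T$ restricted to the core, because the core, its bipartition and the leaf counts at the hubs are all determined by $T$ itself (by the first claim, the bipartition classes and the hub side $S$ are intrinsic). Hence it suffices to show that, for all but $O_d(m^{d-2})$ leaf vectors, every representation $(F,x)$ has trivial stabilizer. That is, no nonidentity relabelling $(\sigma,\pi)$ maps $F$ to itself while preserving $x$.

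The key step is to note that the trivial-stabilizer conclusion holds whenever the entries $x_1,\dots,x_d$ are pairwise distinct and at least $1$. Suppose $(\sigma,\pi)$ fixes $(F,x)$. Then $x_{\sigma(i)}=x_i$ for all $i$, so distinct entries force $\sigma=\mathrm{id}$. It then remains to see that an automorphism of the core $F$ fixing every hub also fixes every vertex of $F-S$. Since $F$ is a tree in which every vertex of $F-S$ has degree at least two with all its neighbours in $S$, a vertex $u\in F-S$ is determined by its neighbourhood $N_F(u)\subseteq S$. Two distinct such vertices cannot have the same neighbourhood of size at least two, as that would create a $4$-cycle. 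Therefore $\pi$ fixes each $u$, and the stabilizer is trivial.

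The count of bad vectors is routine. The vectors with $\sum x_i=m-k$ and $x_i=x_j$ for some fixed $i<j$ have one fewer free coordinate, so there are $O_d(m^{d-2})$ of them. Likewise there are $O_d(m^{d-2})$ vectors with some $x_i=0$. A union bound over the $O_d(1)$ pairs and indices gives the claim. We should also check that the leaf vector obtained from a tree is intrinsic: the leaves in the large class attached at hub $s$ are the pendant vertices at $s$ outside $F$. Since $m>d$ and the core is read off from $T$, this holds, so nothing further is needed.

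The only step requiring care is the identification of the stabilizer with the core automorphisms fixing $x$. This amounts to checking that the decomposition of $T$ into its core together with pendant leaf counts is canonical. I would verify it directly from the first claim, which already records that $S$ and the deleted leaves are determined by $T$.
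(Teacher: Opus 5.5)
Your proof is correct and follows essentially the same route as the paper: discard the $O_d(m^{d-2})$ leaf vectors with a repeated coordinate, so that any relabelling preserving $(F,x)$ fixes $S$ pointwise, and then use the no-four-cycle argument to fix $F-S$, which gives a trivial stabilizer and an orbit of size $d!k!$. Your extra exclusion of vectors with a zero coordinate is harmless but unnecessary, because $S$ is intrinsically the smaller bipartition class, so the core and leaf vector are determined by the tree even when some $x_i=0$.
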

\begin{poc}
Only $O_d(m^{d-2})$ leaf vectors have two equal coordinates. For every
other vector, a permutation preserving $(F,x)$ must fix every vertex of
$S$. It must also fix every vertex of $F-S$: two such vertices cannot
have the same neighbourhood in $S$, since their degrees are at least two
and that would give a four-cycle in $F$. Thus the stabilizer is trivial,
and the orbit has size $d!k!$.
\end{poc}

By the last two claims, the contribution for this value of $k$ is
\[
 \frac{d^{k-1}S(d-1,k)}{d!(d-1)!}m^{d-1}+O_d(m^{d-2}).
\]
Summing over $1\le k\le d-1$ and using $m=n-d$ proves the proposition.
\end{proof}

Every core contributes to $A_d$. The star joining all $d$ hubs gives
only $1/(d!(d-1)!)$. We keep all cores in the next proof so that the
first family already has coefficient $A_d$; the remainder or an extra
edge will then provide additional types.

\subsection{A strict gap for every other graph}

There are two cases. A nonempty remainder gives a cycle after $S$
is contracted, and hence two different rooted tree types. Combining
each with all bipartite cores gives two disjoint families with the
same leading coefficient. If the remainder is empty, an extra edge
inside $S$ gives a core that does not occur in the bipartite family.

\claimsof{prop:gap}
\begin{proof}[Proof of Proposition~\ref{prop:gap}]
Write $m=|X|$, $R=V(G)\setminus(S\cup X)$, and $r=|R|$.
Set $L=d+\lfloor r_0\rfloor+1$, so $L>d+r_0$ and depends only on
$d,r_0$.

\begin{claim}
There are $q$ fixed $S$-rooted forests on $R\cup S$, where $q=2$ if
$r>0$ and $q=1$ if $r=0$, such that in the first case their contractions
have different rooted spanning-tree types.
\end{claim}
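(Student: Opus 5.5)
The plan is to reuse the contracted graph $J$ from the proof of Proposition~\ref{prop:amplify}. Contract $S$ in $G-X$ to a root $\rho$ and delete loops and repeated edges. The result $J$ is connected and has vertex set $R\cup\{\rho\}$.

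\textbf{Case $r=0$.} Then $J$ is the single vertex $\rho$. We take the empty forest, in which every hub is an isolated root, so $q=1$. Nothing needs to be distinguished.

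\textbf{Case $r>0$.} We need two $S$-rooted spanning forests of $G[R\cup S]$, ignoring edges inside $S$, whose contractions are non-isomorphic as trees rooted at $\rho$. As in the proof of Proposition~\ref{prop:amplify}, no vertex of $R$ is isolated in $G[R]$, and there are no edges between $R$ and $X$. It follows that every nonroot vertex of $J$ has degree at least two, and every degree-two nonroot vertex is adjacent to $\rho$. That proof then gives $2e(J)\ge 3r$. Since $r\ge1$, this yields $e(J)\ge r+1=v(J)$, so $J$ contains a cycle. Corollary~\ref{cor:cyclic}, or directly Lemma~\ref{lem:surplus}, then gives at least two rooted spanning-tree types of $(J,\rho)$.

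\textbf{Lifting to forests.} Take representatives $T_1,T_2$ of two distinct rooted types. Each edge of $T_j$ at $\rho$ comes from at least one edge of $G$ joining some vertex of $R$ to some vertex of $S$. Fix one such edge for each and replace the contracted edge by it. All edges of $T_j$ inside $R$ are kept. The result is a subgraph of $G[R\cup S]$ in which each component of $T_j-\rho$ is attached to exactly one hub. It is therefore an $S$-rooted spanning forest, possibly with isolated hubs. Contracting $S$ back returns exactly $T_j$, so the two forests have contractions of different rooted types.

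There is no real obstacle. The only point to check is that $J$ has a cycle. One might worry that contraction merges parallel edges to $\rho$ and so loses cycles. However, the bound $2e(J)\ge 3r$ was already derived after deleting repeated edges, so it applies as stated. Even for $r=1$ the lone vertex of $R$ would have degree at most one in $J$, contradicting the degree bound. So $r=1$ cannot occur, and in general $r\ge2$ holds automatically.
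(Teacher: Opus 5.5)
Your proposal is correct and follows the paper's proof. You contract $S$ in $G-X$ to obtain $J$, show that $J$ has a cycle, apply Corollary~\ref{cor:cyclic}, and lift the two rooted representatives back by replacing each root edge with a corresponding edge to $S$, using the empty forest when $r=0$. The only difference is cosmetic: you get the cycle from the edge count $2e(J)\ge 3r$, whereas the paper uses directly that every nonroot vertex of $J$ has degree at least two, so $J$ cannot be a tree, since a tree on at least two vertices has a nonroot leaf.
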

\begin{poc}
Every component of $G-X$ meets $S$, since there are no edges from $X$ to
$R$. Hence an $S$-rooted spanning forest on $R\cup S$ exists. If $r>0$,
contract $S$ to one root in $G-X$, and delete loops and repeated edges.
The resulting graph $J$ is the one used in the proof of
Proposition~\ref{prop:amplify}. Every nonroot vertex has degree at least
two, so $J$ contains a cycle. Corollary~\ref{cor:cyclic} gives two rooted
spanning-tree types. Choose a representative of each, and replace its
edges at the root by corresponding edges to $S$. If $r=0$, use the empty
remainder forest.
\end{poc}

We now attach the large common-neighbourhood class to each of these fixed
remainder forests. The next claim shows that each such choice carries the full
bipartite leading term.

\begin{claim}
For each chosen remainder forest, the bipartite-core construction gives
\[
 A_dn^{d-1}-O_{d,r_0}(n^{d-2})
\]
distinct tree types. If $r>0$, the two families coming from the two
remainder forests are disjoint. Consequently,
\[
 \tauiso(G)\ge qA_dn^{d-1}-O_{d,r_0}(n^{d-2}).
\]
\end{claim}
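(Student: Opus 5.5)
We construct, for a fixed remainder forest $\Phi$, the family of spanning trees obtained from $\Phi$ together with a bipartite core on $S\cup X$ and leaves. Concretely, for each $1\le k\le d-1$ and each labelled bipartite core $F$ on $S$ and $k$ chosen vertices of $X$, we take $F\cup\Phi$. Every hub then lies in one component, and each forest component meets exactly one hub, so this is a spanning tree on $S\cup R\cup V(F)$. The remaining $m-k$ vertices of $X$ are attached as leaves according to a leaf vector $x$ with $\sum x_i=m-k$. All edges used lie in $G$, since $X$ is complete to $S$.

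The count copies the proof of Proposition~\ref{prop:coefficient}. We restrict to leaf vectors whose coordinates are pairwise distinct and pairwise separated by more than $L$, each exceeding $L$. This loses only $O_{d,r_0}(m^{d-2})$ vectors, and $m=n-O_{d,r_0}(1)$. We then recover $(F,x)$ up to the $d!k!$ relabellings from the unlabelled tree $\mathcal T$.

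\begin{enumerate}
\item The hubs are exactly the vertices of degree greater than $L$. Other vertices have degree at most $d+r_0<L$, because forest branches and core vertices are bounded.
\item The leaves adjacent to hubs number $x_i$ plus a bounded correction $\beta_i$ from one-vertex forest branches. The separation between the $x_i$ then lets us order the hubs by degree, and this ordering is stable across the choices.
\item After deleting all leaves at hubs and the hubs themselves, the remaining components are the forest branches (of size at least $2$) and the core vertices of $X$. The latter are recognised as the vertices all of whose neighbours are hubs and which have degree at least $2$.
\end{enumerate}

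One point needs care: a forest branch could itself be a vertex of $R$ adjacent only to hubs. We handle this by recording the entire subtree hanging off the hubs other than the core. Since $\Phi$ is fixed, subtracting its known multiset of hub-branches, including single vertices at known hubs, recovers $F$ and $x$. The automorphism argument from the last claim of Proposition~\ref{prop:coefficient} gives trivial stabilisers. Summing over $k$ yields $A_dn^{d-1}-O_{d,r_0}(n^{d-2})$ types for each forest.

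For disjointness when $r>0$, we recover the contraction type of the forest from $\mathcal T$. From the hubs as identified above, we delete the leaves and core vertices attached only to hubs and contract the hubs. We must check that core vertices in $X$ are never confused with $R$-branches. The cleanest route is to contract $S$ in $\mathcal T$ and delete all resulting pendant single vertices and all vertices of $X$. The problem is that $X$ is not visible in $\mathcal T$.

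So we use an invariant instead: after contracting the hubs of $\mathcal T$ to one root and discarding every rooted branch that is a star of order at most $d$ centred at a hub-neighbour, what remains is the contraction of $\Phi$ minus its star-like parts. The main obstacle is choosing the two forests so that this survives. We arrange it by taking the two rooted types from Corollary~\ref{cor:cyclic} on a component of $J$ that contains a cycle, and comparing the multisets of non-star rooted branches. Alternatively, we compare the total number $|R|$ of vertices in non-core branches with the distance-sum invariant of Lemma~\ref{lem:surplus}, which differs between the two choices.

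Adding the families then gives $\tauiso(G)\ge qA_dn^{d-1}-O(n^{d-2})$.
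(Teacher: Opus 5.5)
Your count for a single remainder forest matches the paper's argument. The threshold $L>d+r_0$ identifies the hubs, the minimal subtree spanning them is $F$, and each type arises from at most $d!k!$ labelled pairs $(F,x)$. Separating the $x_i$ and appealing to trivial stabilisers is unnecessary, since only this upper bound on fibre sizes is used.

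The genuine gap is the disjointness claim for $r>0$. Your working invariant is to contract the hubs and discard every rooted branch that is a star of order at most $d$ centred at a hub-neighbour. This can erase exactly the part where the two forests differ. Your assertion that the two types from Corollary~\ref{cor:cyclic} can be chosen so that their non-star branches differ is unjustified, and false in general. Take $R=\{u,v\}$ with $uv\in E(G)$ and $u,v$ both adjacent to all of $S$. Then $J$ is a triangle on $\{\rho,u,v\}$, and its only rooted spanning-tree types are ``two single-vertex branches'' and ``one edge rooted at an endpoint''. Both consist entirely of stars of order at most $2$ centred at neighbours of the root, so your invariant cannot separate the two families. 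The ``alternatively'' sentence is too vague to replace an argument. It would need the two forests to have different distance sums, which Corollary~\ref{cor:cyclic} does not give (it only gives different types). It would also need the counting step described below.

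The missing observation is that nothing has to be discarded, and $X$ is not invisible where it matters. The core vertices are identifiable: they are the non-hub vertices of the minimal subtree spanning the hubs. This holds because each component of the remainder forest meets exactly one hub and no vertex of $X$ is adjacent to $R$. Hence every non-core branch at a hub with at least two vertices comes from the remainder forest. Single-vertex branches are ambiguous, since they may be $X$-leaves or one-vertex remainder branches. However, the number of one-vertex remainder branches equals $r$ minus the total order of the larger non-core branches, and $r$ is fixed by $G$. This recovers the whole multiset of rooted branches of the forest, and hence its rooted $J$-type after identifying the hubs to one root. The two forests were chosen with different rooted $J$-types, so the families are disjoint. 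Your first attempt (delete the core and the leaves at hubs, then contract) was on the right track, and it works once you add this count; that is exactly the paper's argument.
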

\begin{poc}
For each $1\le k\le d-1$, take a bipartite core $F$ with its $d$ hubs
and $k$ other vertices labelled. Use $k$ vertices of $X$ for these
non-hub vertices, add the fixed remainder forest, and attach the other
vertices of $X$ as leaves, with $x_i\ge L$ leaves at each hub. By
\eqref{eq:labeled-cores}, there are $d^{k-1}k!S(d-1,k)$ labelled cores.
The number of leaf vectors is
\[
  \binom{m-k-dL+d-1}{d-1}
  =\frac{n^{d-1}}{(d-1)!}+O_{d,r_0}(n^{d-2}).
\]

The hubs are exactly the vertices of degree greater than $d+r_0$: each
has at least $L$ leaves, whereas all other vertices have degree at most
$d+r_0$. The smallest subtree containing the hubs is $F$, since each
remainder component is attached at only one hub and every non-hub vertex
of $F$ has degree at least two. Thus for a fixed forest and $k$, each
unrooted type comes from at most $d!k!$ labelled choices. Summing over
$k$ gives the stated contribution $A_dn^{d-1}-O_{d,r_0}(n^{d-2})$.

If $r>0$, once the core is identified, every branch outside it with at
least two vertices belongs to the remainder. Their total order,
subtracted from $r$, gives the number of one-vertex remainder branches.
After identifying the hubs as one root, these branches recover the rooted
$J$-type. Hence the two remainder forests give disjoint families.
\end{poc}

This already gives a strict gain when $r>0$. If $r=0$, there is only the
empty remainder forest, so the extra contribution must come from an edge
inside $S$.

\begin{claim}
If $r=0$ and $G\not\cong K_{d,n-d}$, then there is an additional family
of tree types, disjoint from the bipartite-core family, of size at least
\[
  \frac{n^{d-1}}{d!(d-1)!}-O_d(n^{d-2}).
\]
\end{claim}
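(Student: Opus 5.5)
When $r=0$, every vertex outside $S$ lies in $X$, so $G$ is $K_{d,n-d}$ together with some edges inside $S$. Since $G\not\cong K_{d,n-d}$, there is at least one edge $s_1s_2$ with $s_1,s_2\in S$. I will use only this edge. The plan is to build trees containing $s_1s_2$ from a star core, and to show that their types never occur among the bipartite-core trees of the previous claim.

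\emph{Construction.} Fix $d-2$ vertices $y\in X$. Let each $y$ be adjacent to $s_1$ and to a distinct hub in $S\setminus\{s_1,s_2\}$, and add the edge $s_1s_2$. This is one concrete core, a tree on $S$ together with $d-2$ vertices of $X$. To make the automorphism analysis cleanest, I would instead take a single vertex $y\in X$ joined to all hubs other than $s_2$, plus the edge $s_1s_2$. Attach every other vertex of $X$ as a leaf at the hubs, with leaf vector $x=(x_1,\dots,x_d)$ satisfying $x_i\ge L$ and $\sum x_i=m-1$.

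\emph{Counting.} There are $\binom{m-1-dL+d-1}{d-1}=n^{d-1}/(d-1)!+O_d(n^{d-2})$ such leaf vectors. As in the earlier claims, the hubs are recognised in the unrooted tree as exactly the vertices of degree greater than $d+r_0$. Consider leaf vectors whose coordinates are pairwise distinct. For these, the leaf counts identify each hub individually, so any isomorphism between two such trees must match hubs to hubs with equal leaf counts. Hence the tree determines $x$ as a sequence up to the labelling of $S$. Labelling $S$ in $d!$ ways therefore loses at most a factor $d!$, which gives the required count $n^{d-1}/(d!(d-1)!)-O_d(n^{d-2})$.

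\emph{Disjointness.} This is the key point and is short. In every bipartite-core tree, deleting the leaves leaves a tree that is bipartite between the hubs and the non-hubs, so no two hubs are adjacent. In the new family, the hubs $s_1,s_2$ are adjacent. Because the hubs are canonically identifiable as the high-degree vertices in both families, this adjacency is an isomorphism invariant, and the two families are disjoint.

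The main obstacle is making sure the hub identification is uniform across both families. In both, every non-hub has degree at most $d$ (here $r_0=0$, so the threshold $d+r_0$ is just $d$) and every hub has degree at least $L>d$, which settles it. Adding this family to the bipartite-core family from the previous claim gives $\tauiso(G)\ge (A_d+1/(d!(d-1)!))n^{d-1}-O_d(n^{d-2})>\tauiso(K_{d,n-d})$ for large $n$, using Proposition~\ref{prop:coefficient}.
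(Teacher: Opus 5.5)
Your proposal is correct and is essentially the paper's argument: the same core (one vertex of $X$ joined to every hub except $s_2$, plus the edge $s_1s_2$), hubs recognised by degree, disjointness because spanning trees of $K_{d,n-d}$ have no hub--hub edges, and a loss of at most $d!$ per type. One small slip: in this claim it is $r$, not $r_0$, that equals $0$. This is harmless, since $L>d$ and every non-hub has degree at most $d$ in both families. Also, restricting to leaf vectors with distinct coordinates is unnecessary, because leaf vectors that give the same type are always permutations of one another.
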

\begin{poc}
Since $r=0$, the graph $G$ is $K_{d,n-d}$ with at least one extra edge
$ab$ inside $S$. Choose $z\in X$ and use the tree on $S\cup\{z\}$ with
edges
\[
  ab\quad\text{and}\quad zs\ \text{ for }s\in S\setminus\{b\}.
\]
Attach the remaining vertices of $X$ as leaves, again with $x_i\ge L$.
The hubs are identified by their degrees. Their smallest connecting
subtree now contains an edge between two hubs, so none of these types
belongs to the bipartite-core family. Each type comes from at most $d!$
labelled leaf vectors, which gives the stated lower bound.
\end{poc}

We can now compare the leading coefficients in the two cases. Set
\[
  \eta_d:=\min\left\{A_d,\frac{1}{d!(d-1)!}\right\}>0.
\]
If $r>0$, the second claim gives a leading coefficient at least $2A_d$.
If $r=0$, the second and third claims together add at least
$1/(d!(d-1)!)$ to the leading coefficient. Thus in both cases the leading
coefficient exceeds $A_d$ by at least $\eta_d$.

All error constants depend only on $d,r_0$: $L$ is fixed by these
parameters, $k\le d-1$, $r\le r_0$, and only finitely many cores and
remainder forests occur up to labelling. Together with
Proposition~\ref{prop:coefficient}, there is therefore a constant
$M_{d,r_0}>0$ such that
\[
  \tauiso(G)-\tauiso(K_{d,n-d})
  \ge \eta_dn^{d-1}-M_{d,r_0}n^{d-2}
  =n^{d-2}(\eta_dn-M_{d,r_0}).
\]
This is positive once $n>M_{d,r_0}/\eta_d$. Increasing the threshold to
cover the earlier counting requirements gives a single $n_0(d,r_0)$ for
all graphs in the proposition.
\end{proof}

\medskip

\section{Proof of the main theorem}\label{sec:mainproofs}

We now combine the stability theorem with the extremal comparison from the
previous section. A minimizer has no more types than the bipartite example,
so Theorem~\ref{thm:stability} applies; Proposition~\ref{prop:gap} then rules
out every bounded-remainder graph other than $K_{d,n-d}$.

\begin{proof}[Proof of Theorem~\ref{thm:main}]
Fix $d\ge3$ and let $n$ be sufficiently large. Let $G_n$ be any graph
minimizing $\tauiso$ among connected $n$-vertex graphs of minimum
degree at least $d$. Since $K_{d,n-d}$ belongs to this class,
Proposition~\ref{prop:coefficient} gives
\[
  \tauiso(G_n)\le\tauiso(K_{d,n-d})=O_d(n^{d-1}).
\]
Theorem~\ref{thm:stability} therefore gives a degree-$d$
common-neighbourhood class in $G_n$ with a remainder bounded in terms
of $d$. If $G_n\not\cong K_{d,n-d}$, Proposition~\ref{prop:gap} gives
\[
  \tauiso(G_n)>\tauiso(K_{d,n-d}),
\]
contradicting the choice of $G_n$. Thus every minimizer is isomorphic
to $K_{d,n-d}$, which proves both the lower bound and its equality
statement.
\end{proof}

\medskip

\section*{Acknowledgement}

This project was initiated at the IBS ECOPRO Combinatorics Summer School in
2026. We thank Xinbu Cheng for introducing this question to us, and we also thank Hong Liu for helpful discussions. Z.Y. was supported by the
Institute for Basic Science (IBS-R029-C4).

\medskip
\noindent\textbf{Statement of AI usage:}
Our initial argument gave the lower bound
$\Omega_d((n/(\log n)^{65})^{d-1})$ for connected graphs of minimum degree at least $d$. 
We had also proved the sharp lower bound $\Omega_d(n^{d-1})$ under
the additional assumption of $d$-connectivity.
We then used GPT-5.6 to carry out a more detailed analysis, which led to the improved lower bound $\Omega_d((n/(\log n)^2)^{d-1})$.
We then worked with GPT-6.0 and obtained the bound $\Omega_d(n^{d-1})$.
We subsequently reorganized and simplified this argument into the proof presented here.
The authors take full responsibility for the mathematical claims, proofs, and final presentation of the paper.

\end{document}